\newcommand{\R}{\mathbb{R}}
\newcommand{\ol}{\overline}
\newcommand{\argmax}{\operatorname{argmax}}
\renewcommand{\div}{\operatorname{div}}
\def\leq{\leqslant}
\def\geq{\geqslant}
\numberwithin{equation}{section}
\newtheoremstyle{thmlemcorr}{10pt}{10pt}{\itshape}{}{\bfseries}{.}{10pt}{{\thmname{#1}\thmnumber{
#2}\thmnote{ (#3)}}}
\newtheoremstyle{thmlemcorr*}{10pt}{10pt}{\itshape}{}{\bfseries}{.}\newline{{\thmname{#1}\thmnumber{
\newtheoremstyle{defi}{10pt}{10pt}{\itshape}{}{\bfseries}{.}{10pt}{{\thmname{#1}\thmnumber{
#2}\thmnote{ (#3)}}}
\newtheoremstyle{remexample}{10pt}{10pt}{}{}{\bfseries}{.}{10pt}{{\thmname{#1}\thmnumber{
#2}\thmnote{ (#3)}}}
\newtheoremstyle{ass}{10pt}{10pt}{}{}{\bfseries}{.}{10pt}{{\thmname{#1}\thmnumber{
A#2}\thmnote{ (#3)}}}
\theoremstyle{thmlemcorr}
\newtheorem{theorem}{Theorem}
\numberwithin{theorem}{section}
\newtheorem{lemma}[theorem]{Lemma}
\newtheorem{corollary}[theorem]{Corollary}
\newtheorem{proposition}[theorem]{Proposition}
\theoremstyle{thmlemcorr*}
\newtheorem{theorem*}{Theorem}
\newtheorem{lemma*}[theorem]{Lemma}
\newtheorem{corollary*}[theorem]{Corollary}
\newtheorem{proposition*}[theorem]{Proposition}
\newtheorem{problem*}[theorem]{Problem}
\newtheorem{conjecture*}[theorem]{Conjecture}
\theoremstyle{defi}
\newtheorem{definition}[theorem]{Definition}
\theoremstyle{remexample}
\newtheorem{remark}{Remark}
\newtheorem{example}[theorem]{Example}
\theoremstyle{plain}
\theoremstyle{ass}
\title[Forced mean curvature flow]{Capillary-type boundary value problems of mean curvature flows with force and transport terms on a bounded domain}
\begin{document}

\author{Jiwoong Jang}
%\affil{dfdf}
\address[Jiwoong Jang]
{%
	Department of Mathematics, 
	University of Wisconsin Madison, Van Vleck hall, 480 Lincoln drive, Madison, WI 53706, USA}
\email{jjang57@wisc.edu}

%\maketitle

\thanks{
	The work of JJ was partially supported by NSF CAREER grant DMS-1843320.
}

%\date{\today}
\keywords{Mean curvature flow of graphs; level-set mean curvature flows; capillary-type boundary value; Neumann boundary problem; viscosity solutions; global Lipschitz regularity; additive eigenvalue problem; large time behavior}
\subjclass[2010]{
	35B40, %Asymptotic behavior of solutions, 
	49L25, %Viscosity solutions
	53E10, %Flows related to mean curvature
	35B45, %A priori estimates in context of PDEs
	35K20, %Initial-boundary value problems for second-order parabolic equations
	35K93, %Quasilinear parabolic equations with mean curvature operator
	%Eigenvalue problems?
}

\begin{abstract}
In this paper, we study the forced mean curvature flows and the prescribed mean curvature equations of both graphs and level-sets with capillary-type boundary conditions on a $C^3$ bounded domain, which is not necessarily convex. We prove \emph{a priori} gradient estimates locally Lipschitz in time. Under an assumption on the forcing term, we prove that the gradient estimates are globally Lipschitz in time. As a consequence, we obtain the existence theorem of solutions. In our formulation, we recover the known results of the gradient estimates on a strictly convex $C^3$ bounded domain. Next, we study the associated eigenvalue problems for mean curvature flows of both graphs and level-sets. We prove the large time behavior of the solutions of mean curvature flows of graphs on a smooth bounded domain. Finally, we compute the asymptotic speed of the solutions of level-set mean curvature flows and the large time profile of level-sets in the radially symmetric case based on optimal control formula. Examples arising in the radially symmetric case demonstrate that the additional assumption on the forcing term is optimal.
\end{abstract}

\maketitle

%%%%%%%%%%%%%%%%%%%%%%%%%%%%%%%%%%%%%%%%%%%%%%%%%%%%%%%%%%%%%%%%%%%%%

\section{Introduction} \label{sec:intro}
In this paper, we study the following two problems
\begin{equation}\label{eq:graph}
\begin{cases}
u_t=\sqrt{1+|D u|^2}\div \left(\frac{D u}{\sqrt{1+|D u|^2}}\right)+c(x,u)\sqrt{1+|D u|^2}-f(x,u) \quad &\text{ in } \Omega\times(0,T),\\
\displaystyle \frac{\partial u}{\partial \Vec{\mathbf{n}}}=\phi(x)(\sqrt{1+|D u|^2})^{1-q}\quad &\text{ on } \partial\Omega\times[0,T),\\
u(x,0)=u_0(x) \quad &\text{ on } \overline{\Omega},
\end{cases}
\end{equation}
and
\begin{equation}\label{eq:levelset}
\begin{cases}
u_t=|D u|\div \left(\frac{D u}{|D u|}\right)+c(x,u)|D u|-f(x,u) \quad &\text{ in } \Omega\times(0,T),\\
\displaystyle \frac{\partial u}{\partial \Vec{\mathbf{n}}}=\phi(x)\quad &\text{ on } \partial\Omega\times[0,T),\\
u(x,0)=u_0(x) \quad &\text{ on } \overline{\Omega},
\end{cases}
\end{equation}
where $q>0$ in \eqref{eq:graph} is a fixed positive number, and $T>0$ denotes values in $(0,\infty]$. Solutions of \eqref{eq:levelset} are understood in the viscosity sense. A forcing term $c=c(x,z)$ and a transport term $f=f(x,z)$ depend on the spatial position $x\in\overline{\Omega}$ and the value $z\in\mathbb{R}$, and they are functions in $C^{1,\alpha}(\overline{\Omega}\times\mathbb{R})$ for a fixed $\alpha\in(0,1)$. The functions $c$ and $f$ of $(x,z)\in\overline{\Omega}\times\mathbb{R}$ are assumed, throughout this paper, to be $C^{1,\alpha}$ functions and to satisfy, for some constant $C$,
\begin{equation}\label{assumtion:c}
|c|\leq C,\quad |D_x c|\leq C,\quad c_z\leq0, 
\end{equation}
and
\begin{equation}\label{assumtion:f}
|f|\leq C,\quad |D_x f|\leq C,\quad f_z\geq0,
\end{equation}
for all arguments $(x,z)\in\overline{\Omega}\times\mathbb{R}$. The vector $\Vec{\mathbf{n}}$ denotes the outward unit normal vector to $\partial\Omega$, and $\phi=\phi(x)\in C^3(\overline{\Omega})$.
Throughout this paper, we assume that the domain $\Omega \subset \R^n$ is bounded and $C^3$-regular. We also assume that $u_0\in C^{2,\alpha}(\overline{\Omega})$ with the same $\alpha\in(0,1)$ as above, and we say the initial condition $u_0$ is compatible with the boundary condition if
$$
\frac{\partial u_0}{\partial \Vec{\mathbf{n}}}=\phi(x)(\sqrt{1+|D u_0|^2})^{1-q}\ \text{on}\ \partial\Omega
$$
in \eqref{eq:graph} and
$$
\frac{\partial u_0}{\partial \Vec{\mathbf{n}}}=\phi(x)\ \text{on}\ \partial\Omega
$$
in \eqref{eq:levelset}, and we always assume the compatibility in this paper. Next, we consider the following forced mean curvature equations 

\begin{equation}\label{eq:eigengraph}
\begin{cases}
-\sum_{i,j=1}^n\left(\delta^{ij}-\frac{w_iw_j}{1+|Dw|^2}\right)w_{ij}-c(x)\sqrt{1+|D w|^2}+f(x)=-\lambda \quad &\text{ in } \Omega,\\
\displaystyle \frac{\partial w}{\partial \Vec{\mathbf{n}}}=\phi(x)(\sqrt{1+|D w|^2})^{1-q}\quad &\text{ on } \partial\Omega 
\end{cases}
\end{equation}
with general capillary-type boundary conditions and
\begin{equation}\label{eq:eigenlevelset}
\begin{cases}
-\sum_{i,j=1}^n\left(\delta^{ij}-\frac{w_iw_j}{|Dw|^2}\right)w_{ij}-c(x)|Dw|+f(x)=-\lambda \quad &\text{ in } \Omega,\\
\displaystyle \frac{\partial w}{\partial \Vec{\mathbf{n}}}=\phi(x)\quad &\text{ on } \partial\Omega,
\end{cases}
\end{equation}
with Neumann boundary conditions. Here, $u_i=u_{x_i},\ u_{ij}=u_{x_ix_j}$ (and the same for $w$) denote the partial derivatives of $u$ in $x_i$, $x_i$ and $x_j$ in order, respectively. The term $\delta^{ij}$ is the $(i,j)$-entry of the $n$ by $n$ identity matrix for $i,j=1,\cdots,n$. Equation \eqref{eq:eigenlevelset} is understood in the viscosity sense. Equations \eqref{eq:eigengraph} and \eqref{eq:eigenlevelset} correspond to \eqref{eq:graph} and \eqref{eq:levelset}, respectively. $\lambda$ is a real number, and it is called an eigenvalue. The stationary problems \eqref{eq:eigengraph} and \eqref{eq:eigenlevelset} are also considered as additive eigenvalue problems.

The four equations above, \eqref{eq:graph}, \eqref{eq:levelset}, \eqref{eq:eigengraph} and \eqref{eq:eigenlevelset},  will be studied by obtaining \emph{a priori} $C^1$ estimates for
\begin{equation}\label{eq}
\begin{cases}
u_t=\sqrt{\eta^2+|D u|^2}\div \left(\frac{D u}{\sqrt{\eta^2+|D u|^2}}\right)+c(x,u)\sqrt{\eta^2+|D u|^2}-f(x,u) \quad &\text{ in } \Omega\times(0,T),\\
\displaystyle \frac{\partial u}{\partial \Vec{\mathbf{n}}}=\phi(x)v^{1-q}\quad &\text{ on } \partial\Omega\times[0,T),\\
u(x,0)=u_0(x) \quad &\text{ on } \overline{\Omega},
\end{cases}
\end{equation}
and \emph{a priori} $C^0$, $C^1$ estimates for 
\begin{equation}\label{eq:eigen}
\begin{cases}
-\sum_{i,j=1}^n\left(\delta^{ij}-\frac{u_iu_j}{\eta^2+|Du|^2}\right)u_{ij}-c(x)\sqrt{\eta^2+|D u|^2}+f(x)=-ku \quad &\text{ in } \Omega,\\
\displaystyle \frac{\partial u}{\partial \Vec{\mathbf{n}}}=\phi(x)v^{1-q}\quad &\text{ on } \partial\Omega 
\end{cases}
\end{equation}
where $v=\sqrt{\eta^2+|D u|^2}$ and $k>0$. The choices $\eta=1,\ q>0$ and $\eta=0,\ q=1$ in \eqref{eq} yield \eqref{eq:graph} and \eqref{eq:levelset}, respectively. The same choices in \eqref{eq:eigen} correspond to \eqref{eq:eigengraph} and \eqref{eq:eigenlevelset}, respectively  after letting $k\to0$. In the choice of $\eta=0,\ q=1$, we first take $\eta\in(0,1]$, and then we let $\eta\to0$, considered as a vanishing viscosity parameter. Whenever we discuss the vanishing viscosity parameter $\eta\in(0,1]$, especially obtaining estimates uniform in $\eta\in(0,1]$, we refer to the case $q=1$.

%Obtaining \emph{a priori} gradient estimates for \eqref{eq} and \eqref{eq:eigen} is indeed the main point of this paper. %In \eqref{eq}, we allow the terms $c$ and $f$ to depend on the variable $z$ so that $c=c(x,z),\ f=f(x,z)\in C^1(\overline{\Omega}\times\mathbb{R})$. %Introduction of the variable $z$ to the terms $c$ and $f$ is for homogenization in Section \ref{sec:homogenization}, and the main theorems in Section \ref{sec:intro} will be written without the dependence of the variable $z$.

We note that if $q=0$, \eqref{eq:graph} is the capillary problem, and \eqref{eq:levelset} is the capillary problem formulated as the level-set equation. If $q=1$, \eqref{eq:graph} and \eqref{eq:levelset} are Neumann boundary value problems. We investigate the well-posedness and the large time behavior of the forced mean curvature flow on a $C^3$ bounded domain with general capillary-type boundary conditions, i.e., $q>0$.

The novelty of this paper is threefold; first of all, the multiplier method in \cite{JKMT} can be combined with the method in \cite{WWX} in order to get \emph{a priori} gradient estimates of \eqref{eq} uniform in $\eta\in(0,1]$. The combination of the methods allows us to handle the difficulties coming from the nonconvexity of $\Omega$, a forcing term $c$, a transport term $f$, a nonzero boundary condition with $\phi\not\equiv0$ at the same time. By using the two methods simultaneously, we get a uniform \emph{a priori} gradient estimate, and therefore, we get quite general results. This is the main contribution of this paper. In the gradient estimate, we derive a sufficient condition on a forcing term $c$ to ensure the global Lipschitz regularity, which we call the \emph{coercivity} assumption on $c$. Second of all, we keep the force term $c$ coercive during the interpolation, while we apply the Leray-Schauder fixed point theorem, so that a uniform gradient estimate is maintained. This extra care on the force $c$ is a new step, not arising in \cite{WWX}, and it is necessary and natural since we observe that the coercivity condition is crucial to study the large time behavior. We accordingly are able to study the mean curvature equations \eqref{eq:eigengraph} and \eqref{eq:eigenlevelset}. Finally, by adopting the approaches in \cite{JKMT, GMT}, we discuss the optimality of the coercive condition on $c$, and compute the eigenvalue, the large time profile based on the optimal control formula in the radially symmetric setting of \eqref{eq:levelset}. We also give a dynamics proof in order to deal with the boundary, which does not appear in \cite{GMT}, when we study the asymptotic behavior.

The multiplier method in \cite{JKMT} has been considered new and devised only recently, and it successfully treats the homogeneous Neumann boundary condition. The method is natural, and it explains how the geometry of $\partial\Omega$ affects gradient estimates, which turn out to be sharp. This paper presents as a new contribution that the multiplier method can be generalized to deal with general capillary-type boundary conditions by combining with the method that has been established in \cite{WWX}. The result is general because \eqref{eq:graph} and \eqref{eq:levelset} cover a wide range of equations on a general bounded domain. The process of combining is linear and natural, which justifies that each of the methods is natural. Moreover, the multiplier method highlights the coercivity assumption on the force $c$ with the right angle condition. Another observation of this paper is that we can study the additive eigenvalue problem with this coercivity condition.

%We will see that the coercivity condition in this paper is exactly the same as that in \cite{JKMT}.

\begin{comment}

when we study the large time behavior of \eqref{eq:levelset} in the radially symmetric setting, we write a dynamics proof to deal with the boundary, which does not appear in \cite{GMT}. Also, we discuss the optimality of the coercive condition on $c$ with the optimal control formula.

we adopt the dynamics, and use the \emph{Aubry} sets as in \cite{JKMT}, \cite{GMT}, respectively. to compute the eigenvalue and the large time profile

The main novelty of this paper is that on a $C^3$ bounded domain not necessarily convex, we can give \emph{a priori} gradient estimates of \eqref{eq} uniform in $\eta>0$ by combining the method in \cite{WWX} and the multiplier method in \cite{JKMT}. Another point is that with the estimates, we can study the mean curvature equations \eqref{eq:eigengraph}, \eqref{eq:eigenlevelset} through homogenization. Finally, based on the optimal control formula, we discuss an example and prove the large time behavior of \eqref{eq:levelset} in the radially symmetric setting.

\end{comment}

We first discuss the literature, which is not an exhaustive list at all, on the capillary problem and the Neumann boundary value problem of mean curvature flows in Subsection \ref{subsec:1.1}. Next, we provide the main results in Subsection \ref{subsec:1.2}, and we outline the approaches of this paper in Subsection \ref{subsec:1.3}.

\medskip

\subsection{Literature}\label{subsec:1.1}
The capillary problem has been an important subject for decades because of motivations and applications in physics, such as wetting phenomena \cite{CQ, GBQ}, behaviors of droplets \cite{AD, CM, DGO, Q}. It also has been investigated with emphasis on obtaining gradient estimates. For instance, \cite{U, SS, Gerhardt, L} study gradient estimates of the mean curvature equation with test function technique. In 1975, the maximum principle was first used to get gradient estimates \cite{S}, and \cite{K, L} are based on the maximum principle. Paper \cite{L} also deals with boundary conditions $q=0$ and $q>1$, and in these cases, boundary gradient estimates have been shown \cite{X} recently with a new proof using the maximum principle. The results when $0<q<1$ have been obtained in \cite{WWX}. For the mean curvature flow, the well-posedness and the large time behavior of solutions has been studied in \cite{AW, Guan}. In particular, \cite{AW} deals with the case when $q=0$ in the dimension $n=2$, and the questions about the well-posedness and the large time behavior in higher dimensions are still open. The vertical capillary problem, i.e., when $\phi(x)=0$ and thus when the problem is also the homogeneous Neumann boundary problem, has been investigated \cite{H}.

The mean curvature flow with Neumann boundary conditions has been of significance on its own. Paper \cite{AC} investigates the mean curvature equation with the homogeneous Neumann condition on a convex domain in the graph case. Recently, the mean curvature flow with general Neumann boundary conditions has been studied \cite{X2}, and a uniform gradient estimate has been obtained for Neumann boundary conditions on a strictly convex domain \cite{MWW}. Also, \cite{MT} studies gradient estimates with Neumann boundary conditions.

The level-set formulation of the mean curvature flow with the homogeneous Neumann boundary condition, understood in the viscosity sense, has been studied \cite{GOS} on a smoothly bounded convex domain, based on the maximum principle. Paper \cite{GOS} also contains an illustration where we lose a global gradient estimate on a nonconvex domain. Note that the illustration justifies the necessity of a nonzero force term in order to have a global gradient estimate on a nonconvex domain. In this context, the results on the forced mean curvature flow with the right angle condition have been obtained \cite{JKMT} recently, which explains the effect of the constraints by the forcing term and by the geometry of the boundary. However, there are no results on the forced mean curvature flow and the forced mean curvature equation with more general boundary conditions on a general bounded domain, for neither the graph case nor the level-set case.

In the context of the above, the main goal of this paper is to study the well-posedness and the large time behavior of solutions of capillary-type boundary value problems, i.e., $q>0$, of the mean curvature flow with a forcing term and a transport term for the graph case, and to study Neumann boundary problems, $q=1$, for the level-set case, on a bounded domain with $C^3$ boundary, which is not necessarily convex. It generalizes \cite{WWX} to capillary-type boundary value problems on a nonconvex domain with a force, and generalizes \cite{JKMT} to nonzero Neumann boundary value problems with a transport term.

\medskip
\subsection{Main results}\label{subsec:1.2} We first list the main results of this paper, and then discuss the main difficulties and the approaches to overcome.

We start with a local gradient estimate.

\begin{theorem}\label{thm:local-grad}
Let $\Omega$ be a $C^3$ bounded domain in $\mathbb{R}^n,\ n\geq2$. Suppose that $c$ and $f$ satisfy \eqref{assumtion:c} and \eqref{assumtion:f}. Then, for each $T\in(0,\infty)$, there exists a unique solution $u\in C^{2,
\sigma}(\overline{\Omega}\times[0,T])\cap C^{3,
\sigma}(\Omega\times(0,T])$ of \eqref{eq:graph} for some $\sigma\in(0,1)$, and there exists a unique viscosity solution $u$ of \eqref{eq:levelset}. For both \eqref{eq:graph} and \eqref{eq:levelset}, moreover, there exists a constant $M>0$ such that and  for each $T\in(0,\infty)$, there exists a constant $R_T>0$ depending only on $T$, $\Omega$, $c$, $f$, $\phi$,  $q$, $u_0$ such that 
\[
\begin{cases}
|u(x,t)-u(x,s)|\leq M|t-s|,\\
|u(x,t)-u(y,t)|\leq R_T|x-y|,
\end{cases}
\quad \text{ for all $x,y\in\overline{\Omega}$, $t,s\in [0,T]$.}
\]
\end{theorem}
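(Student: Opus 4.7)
The plan is to treat \eqref{eq:graph} and \eqref{eq:levelset} uniformly through the regularized equation \eqref{eq}: fix $\eta\in(0,1]$ (with $\eta=1$, $q>0$ giving the graph case, and $q=1$, $\eta\downarrow0$ giving the level-set case by vanishing viscosity), and derive a priori $C^0$ and $C^1$ estimates for \eqref{eq} which are independent of $\eta$ but may depend on $T$. Once the estimates are in place, existence for \eqref{eq:graph} follows from standard quasilinear parabolic theory (Schauder/Krylov--Safonov, or a Leray--Schauder continuation), while existence and uniqueness for \eqref{eq:levelset} follow by passing to the $\eta\to 0$ limit and invoking the comparison principle for viscosity solutions of Neumann problems.

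The first routine step is the $C^0$ and time-Lipschitz bound. Because $c$ and $f$ are bounded and $u_0\in C^{2,\alpha}$ is compatible, the functions $u_0(x)\pm Mt$ are sub/supersolutions of \eqref{eq} for a constant $M$ depending on $\|c\|_\infty$, $\|f\|_\infty$, $\|u_0\|_{C^2}$ and $\phi$; the maximum principle then gives $|u_t|\le M$ and hence $\|u(\cdot,t)\|_\infty\le\|u_0\|_\infty+Mt$, which in particular justifies treating the $z$-dependence of $c,f$ as a uniformly bounded source term on each time slab $[0,T]$. The interior gradient estimate is a Bernstein-type argument applied to $v=\sqrt{\eta^2+|Du|^2}$: differentiating \eqref{eq} in $x_k$, multiplying by $u_k$, and applying the maximum principle to an auxiliary function of the form $\Phi=\varphi(u)\,v$ or $\log v+\chi(u)$ with $\chi'$ large enough, one absorbs the terms involving $D_xc$, $D_xf$, $c_z\le 0$ and $f_z\ge 0$ (the sign conditions in \eqref{assumtion:c}, \eqref{assumtion:f} are precisely what makes the bad terms have a good sign), yielding an interior bound depending only on $T$ and the data.

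The main obstacle is the boundary gradient estimate, because $\Omega$ is not assumed convex, $\phi$ need not vanish, and the boundary condition mixes $v^{1-q}$ for general $q>0$. Here one follows the strategy outlined in the introduction: combine the multiplier approach of \cite{JKMT} with the capillary-type technique of \cite{WWX}. Concretely, let $d(x)$ be a $C^3$ extension of the signed distance to $\partial\Omega$ and consider an auxiliary function on a tubular neighborhood $\Omega_\delta=\{d<\delta\}$ of the form
\[
w(x,t)=\log v - \phi(x)g(u)\,h(d) + A\,d(x) + B\,|x|^2,
\]
where $g,h$ are carefully chosen so that the capillary-type boundary condition $u_{\vec{\mathbf n}}=\phi v^{1-q}$ produces a favorable sign for $w_{\vec{\mathbf n}}$ on $\partial\Omega$ (this is the \cite{WWX} ingredient handling $q>0$ and $\phi\not\equiv 0$), while the multiplier $h(d)$ is tuned so that the second-fundamental-form terms coming from nonconvexity of $\partial\Omega$ are dominated by the $A\,d$ term on the boundary and by the interior curvature structure of the equation (this is the \cite{JKMT} ingredient). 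At an interior maximum of $w$ one uses the quasilinear parabolic operator in \eqref{eq} and the sign conditions \eqref{assumtion:c}, \eqref{assumtion:f}; at a boundary maximum the differentiated boundary condition forbids a maximum, so $\max w$ is attained either in the interior of $\Omega_\delta$, on $\{d=\delta\}$ where the interior estimate applies, or at $t=0$ where the $C^2$ bound on $u_0$ gives the initial control. The exponential factors $e^{At}$ arising from Grönwall-type absorption of the non-sign-definite terms produce the $T$-dependent constant $R_T$, explaining why the estimate is only locally Lipschitz in time absent a coercivity hypothesis on $c$.

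With the uniform $C^0$, $u_t$ and $C^1$ bounds in hand, the quasilinear operator in \eqref{eq} with $\eta=1$ is uniformly parabolic on bounded gradient sets, so Ladyzhenskaya--Ural'tseva--Solonnikov interior and boundary Schauder estimates upgrade $u$ to $C^{2,\sigma}(\overline\Omega\times[0,T])\cap C^{3,\sigma}(\Omega\times(0,T])$ and give existence for \eqref{eq:graph} via continuity method or the Leray--Schauder fixed point theorem; uniqueness follows from the comparison principle for quasilinear parabolic equations. For \eqref{eq:levelset}, the smooth $\eta>0$ approximants $u^\eta$ enjoy the same $\eta$-independent estimates, so by Arzelà--Ascoli a subsequence converges locally uniformly to a Lipschitz $u$; standard stability of viscosity solutions (including at the Neumann boundary) shows $u$ solves \eqref{eq:levelset}, and uniqueness is the viscosity comparison principle for Neumann problems. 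The spatial and temporal Lipschitz constants of the limit are inherited from the approximants, giving the stated bounds with constants $M$ and $R_T$.
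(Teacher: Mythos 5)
Your overall architecture matches the paper (regularize by $\eta\in(0,1]$, establish $C^0$ and $u_t$ bounds, prove interior and boundary gradient estimates with an exponential time weight to obtain the $T$-dependent constant, then upgrade regularity and pass to the $\eta\to 0$ limit for the level-set case). The $C^0$/$u_t$ step via sub/supersolutions $u_0\pm Mt$ and the elliptic-to-parabolic regularity upgrade are also in line with what the paper does. The genuine gap is in the specific Bernstein auxiliary function you propose for the boundary estimate.

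You take $w=\log v-\phi(x)\,g(u)\,h(d)+A\,d(x)+B|x|^2$, in which all terms besides $\log v$ depend only on $x$, $u$, and the signed distance $d$, but not on $Du$. The crux of the capillary boundary argument (the WWX ingredient the paper adapts) is that $\frac{\partial}{\partial\Vec{\mathbf{n}}}\log v$ (equivalently, the normal derivative of any function of $v$) contains the uncontrolled second normal derivative $u_{\vec{\mathbf{n}}\vec{\mathbf{n}}}$ through $v\,v_{\vec{\mathbf{n}}}=Du\cdot Du_{\vec{\mathbf{n}}}\ni u_{\vec{\mathbf{n}}}u_{\vec{\mathbf{n}}\vec{\mathbf{n}}}$. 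The paper eliminates it by choosing $w=v^{q+1}-(q+1)\phi\,Du\cdot Dh$ with $Dh=\Vec{\mathbf{n}}$ on $\partial\Omega$; because the subtracted term depends on $Du$, its normal derivative also produces $\phi\,u_{\vec{\mathbf{n}}\vec{\mathbf{n}}}$, with exactly the coefficient needed to cancel the one coming from $v^{q+1}$ after inserting the boundary condition $u_{\vec{\mathbf{n}}}=\phi v^{1-q}$. In your $w$, the terms $-\phi g(u)h(d)+Ad+B|x|^2$ are zeroth-order in $Du$, so their normal derivatives cannot produce $u_{\vec{\mathbf{n}}\vec{\mathbf{n}}}$, and the dangerous $u_{\vec{\mathbf{n}}\vec{\mathbf{n}}}$ term from $\log v$ has no counterpart to cancel it. No choice of $g$, $h$, $A$, $B$ fixes this; the cancellation requires a term built from $Du$, which your ansatz lacks.

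Relatedly, the role you assign the multiplier is different from the paper's and does not by itself dispose of a boundary maximum. You say ``at a boundary maximum the differentiated boundary condition forbids a maximum,'' but for the raw test function a boundary maximum is entirely possible; the paper's multiplier $\rho(x)=-\frac{L}{2K_0}|x-x_c|^2+\frac{LK_0}{2}+1$ (a quadratic built from the inscribed ball at $x_0$, the JKMT ingredient) is chosen precisely so that $\rho(x_0)=1$, $\partial\rho/\partial\Vec{\mathbf{n}}(x_0)=-L$, which together with the one-sided boundary inequality $\partial w/\partial\Vec{\mathbf{n}}<Lw$ forces $\partial(\rho w)/\partial\Vec{\mathbf{n}}<0$ at $x_0$ and so relocates the maximum of $\rho w$ to the interior ball, where the PDE can be applied. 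Your $A\,d+B|x|^2$ barrier does not reproduce this mechanism, and in any case the boundary inequality $\partial w/\partial\Vec{\mathbf{n}}<Lw$ cannot even be derived for your $w$ because of the uncancelled $u_{\vec{\mathbf{n}}\vec{\mathbf{n}}}$. To repair the proposal you would need to replace your auxiliary function by one containing a $Du$-dependent correction term of the form $\phi\,Du\cdot Dh$ with $Dh=\Vec{\mathbf{n}}$ on $\partial\Omega$, and to use the inscribed-ball multiplier to push the maximizer inside.
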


For each $x\in\mathbb{R}^n,\ r>0$, we let $B(x,r)$ denote the open ball centered at $x$ with a radius $r$. We recall that for $y\in\partial\Omega$, $\Vec{\mathbf{n}}(y)$ is defined to be the outward unit normal vector to $\partial\Omega$ at $y$. For each $y\in\partial\Omega$, we define the number $K_0(y)$ by
$$
K_0(y)=\sup\{r>0:B(y-r\Vec{\mathbf{n}}(y),r)\subseteq\Omega\}.
$$
Note that the domain $\Omega$ satisfies the uniform interior ball condition since $\Omega$ is a $C^3$ bounded domain. Therefore, there exists a number $\hat{r}>0$ such that $B(y-\hat{r}\Vec{\mathbf{n}}(y),\hat{r})\subseteq\Omega$ for all $y\in\partial\Omega$, which implies $K_0(y)\geq \hat{r}$ for all $y\in\partial\Omega$. We also note that for each $y\in\partial\Omega$, $B(y-K_0(y)\Vec{\mathbf{n}}(y),K_0(y))\subseteq\Omega$, and $B(y-(K_0(y)+\varepsilon)\Vec{\mathbf{n}}(y),K_0(y)+\varepsilon)\nsubseteq\Omega$ for any $\varepsilon>0$. %We say that an open ball $B$ is \emph{inscribed} in the domain $\Omega$ at $y\in\partial\Omega$ if $B(y-K_0(y)\Vec{\mathbf{n}}(y),K_0(y))$.

For each $y\in\partial\Omega$, we define the number $C_{0}(y)$ by
$$
C_0(y)=\max\{\lambda:\lambda\ \textrm{is an eigenvalue of}\ -\kappa\},
$$
where $\kappa:=\left(\kappa^{\ell j}\right)_{\ell,j=1}^{n-1}$ is the curvature matrix of $\partial\Omega$ at $y$.

Next we show that a solution $u$ is globally Lipschitz under further conditions on the forcing term $c$.

\begin{theorem}\label{thm:global-grad}
Let $\Omega$ be a $C^3$ bounded domain in $\mathbb{R}^n,\ n\geq2$. Let
\[
\begin{cases}
C_0=\sup\{C_0(y):y\in\partial\Omega\},\\
K_0=\inf\{K_0(y):y\in\partial\Omega\}.
\end{cases}
\]
Suppose that $c$ and $f$ satisfy \eqref{assumtion:c} and \eqref{assumtion:f}. Suppose that there exists $\delta>0$ such that %either
%$$
%\frac{\partial}{\partial z}c(x,z)<-\delta\quad\text{for all}\ (x,z)\in\overline{\Omega}\times\mathbb{R},
%$$
%or
\begin{equation}\label{condition:c}
\frac{1}{n-1}c(x,z)^2-|Dc(x,z)|-\delta>\max\left\{0,\ C_0|c(x,z)|+\frac{(n-1)C_0}{K_0}+(1+q)\mathrm{sgn}(C_0)C_0^2\right\}
\end{equation}
for all $(x,z)\in\overline{\Omega}\times\mathbb{R}$, where $\mathrm{sgn}(C_0)$ is the sign of the real number $C_0$. Let $T\in(0,\infty)$, and let $u\in C^{2,
\sigma}(\overline{\Omega}\times[0,T])\cap C^{3,
\sigma}(\Omega\times(0,T])$ be the unique solution of \eqref{eq:graph}, $\sigma\in(0,1)$, and with abuse of notations, let $u$ be the unique viscosity solution $u$ of \eqref{eq:levelset}. In both cases, there exist constants $M,L>0$, depending only on $\Omega$, $c$, $f$, $\phi$, $q$, $u_0$ such that
\[
\begin{cases}
|u(x,t)-u(x,s)|\leq M|t-s|,\\
|u(x,t)-u(y,t)|\leq L |x-y|,
\end{cases}
\quad \text{ for all $x,y\in\overline{\Omega}$, $t,s\in[0,T]$.}
\]
\end{theorem}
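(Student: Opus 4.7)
The plan is to prove a uniform-in-$T$ gradient bound for the regularized problem \eqref{eq} with $\eta\in(0,1]$. For the graph equation \eqref{eq:graph} this \emph{is} the statement; for the level-set equation \eqref{eq:levelset}, I would run the same argument with $q=1$ uniformly in $\eta\in(0,1]$ and then pass to the limit $\eta\to 0$ by the standard stability of viscosity solutions. The Lipschitz-in-time constant $M$ is inherited from Theorem \ref{thm:local-grad} (or from the translation-in-$t$ comparison argument together with the pointwise bound on $u_t(\cdot,0)$ supplied by the equation and $u_0\in C^{2,\alpha}$), so the novel content is the uniform $L^\infty$ bound on $|Du|$. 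The heart of the proof is a maximum-principle argument for
\[
\Phi(x,t)=\log v(x,t)+\Psi(x,u(x,t)),\qquad v=\sqrt{\eta^2+|Du|^2},
\]
with $\Psi(x,z)=\alpha\,\chi(d(x))+\beta z$, where $d$ is a $C^3$ extension of the signed distance to $\partial\Omega$, $\chi$ is a cutoff linear near $\{d=0\}$, and $\alpha,\beta>0$ are constants tuned against $\|\phi\|_{C^3}$, $C_0$, $K_0$, and $q$. As in \cite{WWX}, the $\beta z$ term absorbs the $u$-dependence of $c$ and $f$ in the linearized equation; as in \cite{JKMT}, the $\alpha\chi(d)$ term carries the boundary-curvature information. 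Let $(x_0,t_0)$ maximize $\Phi$ on $\overline{\Omega}\times[0,T]$; if $t_0=0$ the bound is immediate from the initial data, and otherwise one splits into the interior and boundary cases.

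At an interior maximum one differentiates the PDE in $x_k$, contracts with $u_k/v$, and produces an evolution inequality for $v$ of the schematic form
\[
v_t-a^{ij}v_{ij}\leq\left(-\tfrac{1}{n-1}c(x_0,u)^2+|D_xc|\right)v+O(1),
\]
with $a^{ij}=\delta^{ij}-u_iu_j/v^2$; the $\tfrac{1}{n-1}$ coefficient reflects the sharp Cauchy--Schwarz inequality applied to the restriction of the second fundamental form of the graph to the $(n-1)$-dimensional orthogonal complement of $Du$, while $c_z\leq 0$ and $f_z\geq 0$ yield the correct signs from the $z$-derivatives of $c$ and $f$. Substituting $Dv$ and $D^2v$ from $D\Phi(x_0,t_0)=0$ and $D^2\Phi(x_0,t_0)\leq 0$, the first branch of \eqref{condition:c}, $\tfrac{1}{n-1}c^2-|Dc|-\delta>0$, produces a strictly negative coefficient of $v$ and forces $v(x_0,t_0)\leq L$ for some $L$ depending only on the data and $\delta$.

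At a boundary maximum $(x_0,t_0)\in\partial\Omega\times(0,T]$, the Hopf-type inequality $\partial_{\vec{\mathbf{n}}}\Phi(x_0,t_0)\geq 0$ is combined with the tangential differentiation of the capillary condition $\partial_{\vec{\mathbf{n}}}u=\phi\,v^{1-q}$. Expanding $Du$ in the basis $\{\vec{\mathbf{n}},e_1,\dots,e_{n-1}\}$ of principal directions of $\partial\Omega$ at $x_0$ with curvatures $\kappa_\ell$, one eliminates $\partial_{\vec{\mathbf{n}}}v$ through $v\,\partial_{\vec{\mathbf{n}}}v=\sum_k u_k\,u_{k\vec{\mathbf{n}}}$ and uses the PDE itself to replace $u_{\vec{\mathbf{n}}\vec{\mathbf{n}}}$. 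After some algebra, the resulting inequality divided by $v$ has a positive right-hand side bounded by
\[
C_0|c|+\frac{(n-1)C_0}{K_0}+(1+q)\,\mathrm{sgn}(C_0)\,C_0^2+o(1)\quad\text{as }v(x_0,t_0)\to\infty,
\]
the three terms encoding respectively the coupling of the forcing term with the worst tangential curvature, the uniform interior-ball radius, and the $q$-dependent capillary correction, while the left-hand side contains the negative bulk contribution $\tfrac{1}{n-1}c^2$ together with lower-order contributions of $\alpha,\beta$. The second branch of \eqref{condition:c} contradicts this for large $v$ and supplies the boundary bound.

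The main obstacle is the boundary bookkeeping. On a nonconvex $C^3$ domain with $\phi\not\equiv 0$ and general $q>0$, the tangential derivative of the capillary condition intertwines $Du$, the principal curvatures of $\partial\Omega$, the exponent $q$, and the forcing term $c$; condition \eqref{condition:c} is calibrated so that each positive boundary contribution is dominated by $\tfrac{1}{n-1}c^2-|Dc|-\delta$. The novelty here, compared with \cite{WWX} and \cite{JKMT} taken separately, is that the multiplier must be tuned \emph{simultaneously} against the $q$-dependent factor $v^{1-q}$ in the capillary condition and against the possibly positive principal curvatures encoded in $C_0$; verifying that precisely the three terms $C_0|c|$, $(n-1)C_0/K_0$, and $(1+q)\mathrm{sgn}(C_0)C_0^2$ arise on the right-hand side, and no others, is the main technical step. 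Once the accounting is done the interior and boundary bounds give a uniform $v\leq L$ independent of $T$ and of $\eta\in(0,1]$, and sending $\eta\to 0$ in the level-set case completes the proof for \eqref{eq:levelset}.
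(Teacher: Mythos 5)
Your proposal identifies the right high-level structure (Bernstein method, an auxiliary function blending the WWX slanted-gradient idea with the JKMT multiplier idea, condition \eqref{condition:c} as the coercivity needed for a $T$-independent bound), but the boundary case contains a genuine gap that the paper is careful to avoid, and the auxiliary function you choose is genuinely different from the paper's.

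The paper works with $w = v^{q+1} - (q+1)\phi\,Du\cdot Dh$, where $Dh=\Vec{\mathbf{n}}$ on $\partial\Omega$. The point of the correction $-(q+1)\phi\,Du\cdot Dh$ (which is absent from your $\Phi=\log v + \alpha\chi(d)+\beta u$) is precisely that the normal second derivative $u_{nn}$ cancels identically when one computes $\partial_{\Vec{\mathbf{n}}} w$ at a boundary maximizer of $w\!\mid_{\partial\Omega}$; this is emphasized as the ``crux'' cancellation in Step 5. You propose instead to ``use the PDE itself to replace $u_{\vec{\mathbf n}\vec{\mathbf n}}$,'' but that substitution introduces the full Hessian (via $\mathrm{tr}\{aD^2u\}$) and $u_t$ back into the Hopf inequality, and does not lead to a clean first-order estimate on $\partial_{\Vec{\mathbf{n}}}\Phi$.

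The more serious issue is structural. You claim that at a boundary maximizer the Hopf inequality $\partial_{\Vec{\mathbf{n}}}\Phi\geq 0$, combined with tangential differentiation of the capillary condition, produces an inequality whose left-hand side ``contains the negative bulk contribution $\tfrac{1}{n-1}c^2$.'' It cannot: that term arises from the Cauchy–Schwarz inequality
\[
\textrm{tr}\{a(Du)(D^2u)^2\}\geq\frac{(\textrm{tr}\{a(Du)D^2u\})^2}{n-1+\eta^2/v^2}=\frac{(u_t-cv+f)^2}{n-1+\eta^2/v^2},
\]
which is usable only when the \emph{full} second-order maximum principle $D^2\Phi\leq 0$ is in force; Hopf plus tangential nonpositivity does not give that. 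What the paper does instead is a two-stage argument: at a boundary maximizer $x_0$ of $w$ it derives only the first-order geometric bound $\partial_{\Vec{\mathbf{n}}}w<Lw$ with $L=(q+1)(C_0+\varepsilon_0)$; it then multiplies by $\rho(x)=-\tfrac{L}{2K_0}|x-x_c|^2+\tfrac{LK_0}{2}+1$ (with $x_c=x_0-K_0\Vec{\mathbf{n}}(x_0)$, \emph{adapted to the maximizer}), so that $\rho w$ attains its maximum at an \emph{interior} point $(x_1,t_1)\in B\times(0,T]$, and there applies the full second-order maximum principle. The three boundary terms in \eqref{condition:c}---namely $C_0|c|$, $\tfrac{(n-1)C_0}{K_0}$, and $(1+q)\mathrm{sgn}(C_0)C_0^2$---come precisely from, respectively, the extra $D\rho\cdot Du$ contribution to $cDv\cdot(-v^{q-1}Du+\phi Dh)$ after $D(\rho w)=0$, the $\textrm{tr}\{aD^2\rho\}$ term, and the $\textrm{tr}\{aDw\otimes D\rho\}+(q+1)V_1$ term. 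Your fixed correction $\alpha\chi(d(x))$ cannot be tuned to cancel all three simultaneously without a maximizer-dependent multiplier and, more importantly, your proof skips the step of moving the maximizer into the interior, which is the only place the $\tfrac{1}{n-1}c^2$ term can be produced. As written, the boundary case is incomplete; the omitted multiplier-push-to-interior step is the heart of the paper's argument.
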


We can relax the conditions \eqref{assumtion:c} and \eqref{assumtion:f} quite a bit if we have \emph{a priori} $C^0$ estimate on $u$. For instance, $\widetilde{f}(x,z)=f(x)+kz$, $k>0$, is not bounded as $z$ runs over $\mathbb{R}$. However, if we know that a solution $u$ is bounded \emph{a priori}, then $\widetilde{f}(x,u)=f(x)+ku$ is bounded as well. Therefore, once we get \emph{a priori} $C^0$ estimate on $u$, we can drop the assumptions $|c|\leq C,\ |f|\leq C$ in \eqref{assumtion:c}, \eqref{assumtion:f}, respectively, for Theorem \ref{thm:local-grad} and Theorem \ref{thm:global-grad}.

The condition \eqref{condition:c} serves as a coercivity assumption, which appears in the classical Bernstein method. In this sense, we sometimes call the forcing term $c$ \emph{coercive} if $c$ satisfies \eqref{condition:c}. One more remark is that the coercivity condition \eqref{condition:c} is an open condition, in the sense that it remains true even if we perturb the force $c$ a little bit.

When the domain $\Omega$ is convex so that $C_0\leq0$, the condition \eqref{condition:c} is equivalent to taking only zero on the right hand of \eqref{condition:c} into account. On the other hand, if the domain $\Omega$ is nonconvex so that $C_0>0$, the condition \eqref{condition:c} considers only $C_0|c(x,z)|+\frac{(n-1)C_0}{K_0}+(1+q)\mathrm{sgn}(C_0)C_0^2$, and moreover, this condition is stronger than the convex case. In other words, we require a stronger coercivity condition on the force to deal with the nonconvex boundary $\partial\Omega$. We may refer to the example on a nonconvex domain suggested in \cite[Section 6]{JKMT}.

The condition \eqref{condition:c} is slightly better than the one given in \cite[Theorem 1.2]{JKMT} in the case when $\phi\equiv0$ so that the boundary condition is the homogeneous Neumann boundary condition, or the right angle condition equivalently. More precisely, when $\phi\equiv0$, one can see easily that the condition \eqref{condition:c} with $q=0$ follows from the condition in \cite[Theorem 1.2]{JKMT}. Thus, the condition in \cite[Theorem 1.2]{JKMT} is assuming more. We also note that the condition \eqref{condition:c} with $q=0$ works as a sufficient condition by following the proof of Theorem \ref{thm:global-grad}.

We note that $C_0$ measures the curvature on the boundary $\partial\Omega$, and $K_0$ measures the width of the domain $\Omega$ with inscribed balls. The appearance of the fraction $\frac{C_0}{K_0}$ in \eqref{condition:c} reflects the battle of the two constraints, namely, from the normal velocity $V=k_1+c$ and from the boundary condition $\frac{\partial u}{\partial \Vec{\mathbf{n}}}=\phi(x)v^{1-q}$, where $k_1$ is $(n-1)$ times of the mean curvature of a level-set of $u$. %This effect of the two constraints will be explained more before we move on to Section \ref{sec:gradestim}.

We also note that if $\Omega$ is strictly convex, then $C_0<0$ so that $C_0|c(x,z)|+\frac{(n-1)C_0}{K_0}-(1+q)C^2_0<0$. This implies that there is a room for improvement of estimates if $\Omega$ is strictly convex, and indeed it turns out that we can recover a global gradient estimate if $c(x,z)\equiv0$. We state the following corollary for $c\equiv0$, which is \cite[Theorem 1.1]{WWX} for \eqref{eq:graph}, together with the corresponding conclusion for \eqref{eq:levelset}.

\begin{corollary}\label{cor:strcvxlevelset}
Let $\Omega$ be a strictly convex $C^3$ bounded  domain in $\mathbb{R}^n,\ n\geq2.$ Let $c\equiv0$. Suppose that the term $f$ satisfies \eqref{assumtion:f}. Then, for each $T\in(0,\infty)$, there exists a unique solution $u\in C^{2,
\sigma}(\overline{\Omega}\times[0,T])\cap C^{3,
\sigma}(\Omega\times(0,T])$ of \eqref{eq:graph} for some $\sigma\in(0,1)$, and there exists a unique viscosity solution $u$ of \eqref{eq:levelset}, with abuse of notations. In both cases, moreover, there exist constants $M,L>0$ depending only on $\Omega$, $c$, $f$, $\phi$, $q$, $u_0$ such that $|u(x,t)-u(x,s)|\leq M|t-s|$, $|u(x,t)-u(y,t)|\leq L |x-y|$ for all $x,y\in\overline{\Omega}$, $t,s\in[0,T]$.
\end{corollary}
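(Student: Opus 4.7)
The plan is to split the argument according to the two equations. The graph case (\eqref{eq:graph}, corresponding to $\eta=1$, $q>0$) is exactly the content of \cite[Theorem 1.1]{WWX}, so it suffices to invoke that result once the setup is in place. All the work therefore lies in the level-set case (\eqref{eq:levelset}, $\eta=0$, $q=1$), where I would obtain the global Lipschitz bound by a vanishing-viscosity argument: fix $\eta\in(0,1]$, $q=1$, $c\equiv 0$ in the regularized equation \eqref{eq}, produce a gradient estimate uniform in $\eta$, and then send $\eta\to 0^+$.

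To obtain the uniform estimate I would re-run the Bernstein-type computation from the proof of Theorem \ref{thm:global-grad}, but now exploiting strict convexity of $\Omega$. Set $v=\sqrt{\eta^2+|Du|^2}$ and consider a multiplier-type auxiliary function
\[
W(x,t)=v(x,t)\,\Psi(x)\,e^{\alpha u(x,t)},
\]
with $\Psi\in C^3(\overline{\Omega})$ a positive multiplier built (as in \cite{JKMT}) from a $C^3$ extension of the outward normal and of the curvature form of $\partial\Omega$, and $\alpha>0$ a constant to be chosen large. The multiplier method combined with the \cite{WWX} computation was already used to derive Theorem \ref{thm:global-grad}; the only change here is that I now want a bound independent of the coercivity term $\tfrac{1}{n-1}c^2-|Dc|$, which vanishes because $c\equiv 0$.

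The main step is the case analysis for the maximum point of $W$ on $\overline{\Omega}\times[0,T]$. At an interior maximum, the linearization of the regularized mean-curvature operator on $v$ together with the multiplier $\Psi e^{\alpha u}$ produces, modulo harmless lower-order terms controlled by $f$ and $|Df|$ via \eqref{assumtion:f}, a leading contribution of the form $\alpha v^2+(\text{bounded})$ from the $e^{\alpha u}$-factor (the Bernstein trick used in \cite{WWX}), which bounds $v$ by taking $\alpha$ large enough. At a boundary maximum, the Hopf-type inequality $\partial_{\vec{\mathbf n}} W\geq 0$ is computed using the Neumann condition $u_{\vec{\mathbf n}}=\phi(x)$ (since $q=1$) and produces a leading boundary term proportional to $-C_0(y)\,v^2$ coming from the curvature form of $\partial\Omega$. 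Because $\Omega$ is strictly convex, every eigenvalue of $-\kappa$ at every $y\in\partial\Omega$ is strictly negative, so $C_0<0$ and this term is $\geq c_* v^2$ for some $c_*>0$, again forcing an upper bound on $v$ independent of $\eta$. This is the step where the strict-convexity hypothesis does the entire job of compensating for the missing coercivity of $c$, and it is the delicate part of the argument.

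Once the bound $|Du|\leq L-1$ is established uniformly in $\eta\in(0,1]$, the existence and uniqueness of the viscosity solution of \eqref{eq:levelset} follows by passing $\eta\to 0^+$ and using standard stability of viscosity solutions together with the uniqueness part of Theorem \ref{thm:local-grad}; the limit solution inherits the spatial Lipschitz bound $L$. The time-Lipschitz bound $|u(x,t)-u(x,s)|\leq M|t-s|$ is then automatic from the equation: inserting the gradient bound into the right-hand sides of \eqref{eq:graph} and \eqref{eq:levelset} yields an $L^\infty$ bound on $u_t$ (in the classical sense for \eqref{eq:graph} and in the viscosity sense for \eqref{eq:levelset}), which is the standard way to upgrade a spatial Lipschitz bound into a parabolic one.
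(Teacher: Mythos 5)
Your overall architecture (vanishing viscosity, uniform-in-$\eta$ gradient bound, maximum point dichotomy interior/boundary) matches the paper, but the mechanism you propose for the interior estimate does not work, and this is exactly the step where strict convexity must enter — not only at the boundary. With $c\equiv0$ there is no coercive $\frac{1}{n-1}c^2v^{q+1}$ term, so at an interior maximum you need some other positive term of top order in $v$. Your claim that the factor $e^{\alpha u}$ produces a leading contribution $\alpha v^2$ fails for this operator: the matrix $a(Du)=I_n-\frac{Du\otimes Du}{\eta^2+|Du|^2}$ is degenerate in the gradient direction, so $\mathrm{tr}\{a(Du)\,Du\otimes Du\}=\frac{\eta^2|Du|^2}{\eta^2+|Du|^2}\leq\eta^2\leq1$, and the quadratic Bernstein term is only $O(\alpha^2 v)$; the first-order term $\alpha v\,\mathrm{tr}\{a(Du)D^2u\}=\alpha v(u_t+f)$ is $O(\alpha v)$ as well. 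Hence the exponential multiplier yields no top-order coercivity, uniformly in $\eta$ (and not even for $\eta=1$). The paper instead takes the multiplier $\rho=\gamma g+1$ with $g$ a strictly convex defining function of $\Omega$ ($D^2g\geq k_0I_n$, $|Dg|\leq1$, $\partial g/\partial\vec{\mathbf n}=1$), so that at an interior maximum of $\rho w$ the term $\frac{w}{(q+1)\rho}\mathrm{tr}\{a(Du)D^2\rho\}\geq\frac{(n-1)\gamma k_0}{q+1}v^{q+1}-C(v+v^q)$ supplies the missing coercive term, while the quadratic error from $D\rho$ is only $-\frac{6\gamma^2}{q+1}v^{q+1}$ and is absorbed by choosing $\gamma$ small. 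This is the second job strict convexity does, which your proposal omits; at the boundary it is only used to get $C_0<0$ and run the Hopf-type sign argument (there $\rho\equiv1$ on $\partial\Omega$ with $\gamma<-L$), which part of your sketch is in the right spirit.

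There is a second gap in your boundary step: with a nonzero datum $\phi$, the normal derivative of $v$ at a boundary maximum contains the second normal derivative $\frac{\partial^2u}{\partial\vec{\mathbf n}^2}$ (through $\partial_{\vec{\mathbf n}}v=\tfrac1v(D^2u\,Du)\cdot\vec{\mathbf n}$ with $u_{\vec{\mathbf n}}=\phi\neq0$), which the maximum principle gives no control over. Your auxiliary function $W=v\,\Psi\,e^{\alpha u}$ has no device to remove it; the paper's choice $w=v^{q+1}-(q+1)\phi\,Du\cdot Dh$ is made precisely so that the terms $\pm\overline{\phi}\,\partial^2\overline{u}/\partial y_n^2$ cancel in the computation of $\partial_{\vec{\mathbf n}}w$ (see the remark after \eqref{wy_n}); the multiplier of \cite{JKMT} alone suffices only in the homogeneous case $\phi\equiv0$. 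Finally, a smaller point: the time-Lipschitz bound does not follow from the gradient bound ``by inserting it into the equation,'' since the right-hand side involves second derivatives; it is obtained beforehand, independently of any gradient estimate, by differentiating in $t$ and applying the maximum principle (Lemma \ref{lem:time-grad}), and indeed that bound on $u_t$ is an input to the gradient estimate rather than a consequence of it.
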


As we have obtained gradient estimates, we next study the additive eigenvalue problems \eqref{eq:eigengraph} and \eqref{eq:eigenlevelset} under the assumption \eqref{condition:c} on the forcing term $c$. In the additive eigenvalue problems, we will consider the terms $c=c(x)$ and $f=f(x)$ that depend only on $x\in\overline{\Omega}$. That being said, the $z$-dependence in the estimates obtained so far plays a role in the additive eigenvalue problems.

Before we introduce the next results, we explain how the additive eigenvalue problem is approached briefly. First of all, we get uniform $C^0$ \emph{a priori} estimates of $|ku|$ in \eqref{eq:eigen} by the maximum principle. Then, we establish uniform $C^1$ \emph{a priori} estimates of \eqref{eq:eigen}. Applying Leray-Schauder fixed point theorem (see \cite{LT}), we get the existence of solutions of \eqref{eq:eigen}. Finding a pair of an eigenvalue and an eigenfunction of \eqref{eq:eigengraph} and \eqref{eq:eigenlevelset} is called additive eigenvalue problems, which have been extensively studied. The problems naturally appear in ergodic optimal control theory, in the homogenization of Hamilton-Jacobi equations, in the large time behavior of the Cauchy problem of Hamilton-Jacobi equations and in weak KAM theory. See \cite{BIM, F, Hung, LPV} and the references therein. We also leave the references \cite{DS, F, FS, I2} for the \emph{Aubry} set, as it is treated separately as an important set in this paper.

\begin{theorem}\label{thm:eigengraph}
Let $\Omega$ be a $C^{\infty}$ bounded domain in $\mathbb{R}^n,\ n\geq2,$ and let $q>0$. Suppose that $c=c(x)$ satisfies \eqref{condition:c}. For $\phi\in C^{\infty}(\overline{\Omega})$, there exists a unique $\lambda\in\mathbb{R}$ such that there exists a solution $u\in C^{\infty}(\overline{\Omega})$ of \eqref{eq:eigengraph}. Moreover, a solution $u$ is unique upto an additive constant.
\end{theorem}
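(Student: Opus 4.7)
The plan is a vanishing-discount approximation: for each small $k>0$, solve the regularized problem \eqref{eq:eigen} (with $\eta=1$, $q>0$), and then send $k\to 0^+$. First, the maximum principle applied at an interior extremum of $u_k$ (boundary extrema being excluded via a Hopf-type argument using the oblique boundary condition $\partial u/\partial \vec{\mathbf{n}}=\phi v^{1-q}$) yields $\|ku_k\|_{\infty}\leq \|c\|_{\infty}+\|f\|_{\infty}$, independently of $k$. Second, rewrite the $-ku$ term as an absorbed transport nonlinearity $\widetilde{f}(x,z):=f(x)+kz$: on the range of $u_k$ it is uniformly bounded by the previous step, and $\partial_z\widetilde{f}=k\geq 0$, so hypothesis \eqref{assumtion:f} holds uniformly in $k$. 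Since the coercivity assumption \eqref{condition:c} constrains only $c(x)$, it is inherited, and a stationary analogue of Theorem \ref{thm:global-grad} (the parabolic argument being essentially spatial) gives $\|Du_k\|_{\infty}\leq L$ uniformly in $k\in(0,1]$.

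With uniform $C^0$ and $C^1$ a priori bounds in hand, \eqref{eq:eigen} becomes a quasilinear uniformly elliptic problem with smooth capillary-type oblique data. Applying the Leray-Schauder fixed point theorem (as in \cite{LT}), along a coercivity-preserving interpolation in the spirit of Subsection~\ref{subsec:1.3}, one obtains $u_k\in C^{2,\sigma}(\overline{\Omega})$; Schauder bootstrap then upgrades this to $u_k\in C^{\infty}(\overline{\Omega})$, using that $\Omega$, $c$, $f$, $\phi$ are all smooth.

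To pass to the limit, fix $x^{*}\in\overline{\Omega}$ and set $v_k:=u_k-u_k(x^{*})$. Then $v_k(x^{*})=0$ and $\|v_k\|_{C^{0,1}(\overline{\Omega})}\leq L\,\mathrm{diam}(\Omega)$, while $\{ku_k(x^{*})\}_{k\in(0,1]}$ is bounded. Along a subsequence, $ku_k(x^{*})\to -\lambda$ and $v_k\to u$ uniformly. The equation that $v_k$ satisfies is uniformly elliptic thanks to the gradient bound, and its right-hand side $\widetilde{f}+kv_k$ is bounded in H\"older norm, so Schauder estimates and bootstrapping upgrade the convergence to $C^{\infty}$. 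The limit pair $(u,\lambda)$ therefore solves \eqref{eq:eigengraph}. For the uniqueness of $\lambda$, I would use a comparison argument in the discount approximation: if $\lambda_1<\lambda_2$ were two admissible eigenvalues with smooth eigenfunctions, the oblique comparison principle would force $u_k^{(2)}-u_k^{(1)}\geq (\lambda_2-\lambda_1)/k+O(1)$ as $k\to 0^{+}$, contradicting the uniform Lipschitz bound on the normalized $v_k$ established above. Uniqueness of the eigenfunction up to an additive constant then follows from the strong maximum principle together with Hopf's boundary point lemma applied to the uniformly elliptic linearized equation satisfied by the difference of two eigenfunctions.

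The most delicate step is the second one, namely transferring the coercive gradient estimate of Theorem \ref{thm:global-grad} from the parabolic setting \eqref{eq} to the stationary problem \eqref{eq:eigen} with a constant independent of $k$. This works because the coercivity computation underlying Theorem \ref{thm:global-grad} is purely spatial, and the sign $\partial_z\widetilde{f}=k\geq 0$ is exactly the favorable sign required by \eqref{assumtion:f}; a secondary subtlety, already flagged in Subsection \ref{subsec:1.3}, is that the Leray-Schauder homotopy must preserve the coercivity \eqref{condition:c} at every stage so that the uniform gradient bound does not degenerate, which is the additional care taken in this paper relative to \cite{WWX}.
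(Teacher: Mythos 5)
Your proposal follows essentially the same route as the paper's Proposition \ref{prop:C0C1estimate} and the proof of Theorem \ref{thm:eigengraph}: vanishing discount with the absorption $\widetilde{f}(x,z)=f(x)+kz$ into the transport term, a uniform coercive gradient bound, Leray--Schauder along a coercivity-preserving homotopy, Schauder bootstrap, then normalize and pass to the limit.

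There is one genuine gap, in the $C^0$ estimate. You claim that boundary extrema of $u_k$ are excluded by a Hopf-type argument from the oblique condition $\frac{\partial u}{\partial\Vec{\mathbf{n}}}=\phi v^{1-q}$. This fails wherever $\phi>0$: at a boundary maximum $x_0$ one has $\frac{\partial u}{\partial\Vec{\mathbf{n}}}(x_0)\geq 0$, which is perfectly compatible with $\phi(x_0)v^{1-q}>0$, so nothing pushes the extremum inward; and at a boundary point the full Hessian inequality $D^2u\leq 0$ needed for the interior maximum-principle computation is simply unavailable (only the tangential part, corrected by curvature, is controlled). The paper instead builds a barrier $g$ with $\bigl(\sqrt{\eta^2+|Dg|^2}\bigr)^{q-1}\frac{\partial g}{\partial\Vec{\mathbf{n}}}>\sup_{\overline{\Omega}}|\phi|$ on $\partial\Omega$ and compares $g-u$: the monotonicity of $b\mapsto\bigl(\sqrt{\eta^2+a^2+b^2}\bigr)^{q-1}b$ forces every minimizer of $g-u$ into the interior, and the usual computation then closes. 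Consequently your stated bound $\|ku_k\|_\infty\leq\|c\|_\infty+\|f\|_\infty$ is too sharp; the correct bound is still uniform in $k\in(0,1]$, which is all that matters, but it carries a contribution from the barrier and hence from $\Omega$ and $\phi$. A secondary remark: your uniqueness-of-$\lambda$ argument runs a comparison inside the discounted problems, but the two eigenfunctions $w_1,w_2$ are not ordered sub/supersolutions of the $k$-discounted equation with the monotone shift you need; the clean route, followed in the paper's reference \cite{WWX}, is to compare the translating solutions $w_1+\lambda_1 t$ and $w_2+\lambda_2 t$ of the parabolic problem \eqref{eq:graph} and let $t\to\infty$.
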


\begin{comment}

\begin{remark}
We can compute the eigenvalue $\lambda$ in \eqref{eq:eigengraph}. Integration by parts on \eqref{eq:eigengraph} yields
$$
\lambda=\frac{\int_{\Omega}c(x)dx}{|\Omega|}+\frac{\int_{\partial\Omega}\phi\cdot(1+|Du|^2)^{-\frac{q}{2}}d\sigma}{|\Omega|}.
$$
Thus, if \eqref{eq:graph} is the vertical capillary problem, i.e.,  $\phi\equiv0$, then the eigenvalue is the average of the force on the domain. See \cite{WWX} for more references.
\end{remark}

\end{comment}

Moreover, we get the following result on the large time behavior of solutions of \eqref{eq:graph} by following the argument in \cite{MWW, WWX, SH}.

\begin{theorem}\label{thm:asympgraph}
Let $\Omega$ be a $C^{\infty}$ bounded domain in $\mathbb{R}^n,\ n\geq2,$ and let $q>0$. Suppose that $c,f,\phi\in C^{\infty}(\overline{\Omega})$, and that $c$ satisfies \eqref{condition:c}. Let $u^i$, $i=1,2$, be the solution of
\begin{equation}%\label{eq:graph}
\begin{cases}
u_t=\sqrt{1+|D u|^2}\div \left(\frac{D u}{\sqrt{1+|D u|^2}}\right)+c(x)\sqrt{1+|D u|^2}-f(x) \quad &\text{ in } \Omega\times(0,\infty),\\
\displaystyle \frac{\partial u}{\partial \Vec{\mathbf{n}}}=\phi(x)(\sqrt{1+|D u|^2})^{1-q}\quad &\text{ on } \partial\Omega\times[0,\infty),\\
u(x,0)=u_0^i(x) \quad &\text{ on } \overline{\Omega},
\end{cases}
\end{equation}
with initial data $u_0^i$ compatible with the boundary condition, respectively for $i=1,2$. Then $\lim_{t\to\infty}|u^1-u^2|_{C^{\infty}(\overline{\Omega})}=0$. In particular, for the solution $u$ of \eqref{eq:graph} and the solution $(\lambda,w)$ of \eqref{eq:eigengraph}, it holds that $\lim_{t\to\infty}|u(x,t)-\lambda t-w(x)|_{C^{\infty}(\overline{\Omega})}=0$.
\end{theorem}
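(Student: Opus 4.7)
My plan is to adapt the large-time framework of \cite{MWW, WWX, SH} to the capillary-type boundary condition, with Theorem \ref{thm:global-grad} supplying the needed \emph{a priori} bounds and Theorem \ref{thm:eigengraph} the eigenpair $(\lambda,w)$. The fact that $c$ and $f$ depend only on $x$ is essential: $\bar u(x,t):=w(x)+\lambda t$ is a classical solution of \eqref{eq:graph} with compatible initial data $w$, and $u^i+C$ is a solution whenever $u^i$ is.

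First I would establish compactness of the trajectories. Comparing each $u^i$ with $\bar u\pm\|u_0^i-w\|_\infty$ via the comparison principle for the oblique nonlinear boundary problem gives that $\tilde u^i(x,t):=u^i(x,t)-\lambda t$ is uniformly bounded on $\overline{\Omega}\times[0,\infty)$. Combined with the global Lipschitz bound from Theorem \ref{thm:global-grad}, the equation is uniformly parabolic with smooth coefficients, and parabolic boundary Schauder estimates applied on successive unit time slabs bootstrap to yield $|\tilde u^i(\cdot,t)|_{C^k(\overline{\Omega})}\leq C_k$ uniformly in $t$ for every $k\in\mathbb{N}$. Hence $\{\tilde u^i(\cdot,t)\}_{t\geq 0}$ is relatively compact in $C^\infty(\overline{\Omega})$.

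Next, set $\Psi(t):=\max_{\overline{\Omega}}(u^1-u^2)(\cdot,t)$ and $\psi(t):=\min_{\overline{\Omega}}(u^1-u^2)(\cdot,t)$. Comparison of $u^1$ with $u^2+\Psi(s)$ and $u^2+\psi(s)$ shows that $\Psi$ is non-increasing and $\psi$ is non-decreasing, so each converges, and $\Psi-\psi\searrow\ell\geq 0$. To prove $\ell=0$, I would take $t_n\to\infty$ and, using the $C^\infty$ compactness and a diagonal argument, pass to a subsequence along which $\tilde u^i(\cdot,t_n+s)\to V^i(\cdot,s)$ in $C^\infty_{\rm loc}(\overline{\Omega}\times\mathbb{R})$, with each $V^i$ satisfying the PDE of $\tilde u^i$. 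Applying the same monotonicity to time-shifts of $u^1$ itself forces $V^i(x,s+h)-V^i(x,s)$ to be independent of $(x,s)$ for every $h$, so $V^i(x,s)=V^i(x,0)+\mu_i s$; plugging into the PDE then identifies $V^i(\cdot,0)$ as a solution of \eqref{eq:eigengraph} with eigenvalue $\lambda+\mu_i$, and uniqueness of the eigenvalue in Theorem \ref{thm:eigengraph} forces $\mu_i=0$. The uniqueness of solutions of \eqref{eq:eigengraph} up to an additive constant then gives $V^1(\cdot,0)-V^2(\cdot,0)\equiv c^*$, so $\Psi(t_n)-\psi(t_n)\to 0$ and $\ell=0$. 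Together with the $C^\infty$ compactness, this shows $(u^1-u^2)(\cdot,t)\to c^*$ in $C^\infty(\overline{\Omega})$, and when $c^*$ is reabsorbed into the eigenfunction $w$ (using uniqueness up to an additive constant) this is the claimed limit; the ``in particular'' statement follows by choosing $u^2=\bar u$.

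The main obstacle I expect is the pair of parabolic prerequisites: the comparison principle for \eqref{eq:graph} with the fully nonlinear oblique term $\phi(x)(1+|Du|^2)^{(1-q)/2}$, which needs a careful linearization at a boundary extremum of $u^1-u^2$, and the $t$-uniformity of the boundary Schauder estimates for the quasilinear problem on the nonconvex $C^\infty$ domain. Once these are settled, the rigidity step identifying $V^i$ as a stationary solution goes through cleanly because the eigenvalue $\lambda$ is unique.
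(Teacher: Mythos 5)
Your overall framework — boundedness of $u-\lambda t$ by sandwiching between $w+\lambda t\pm C$, uniform $C^\infty$ compactness via parabolic Schauder estimates on time slabs, monotonicity of $\Psi$ and $\psi$ via comparison, and then passing to an eternal limit flow $V^i$ and using uniqueness of $\lambda$ — is in the spirit of the argument the paper invokes (the paper's own proof is a one-line reference to \cite[Theorem 5.1]{WWX}, so there is no detailed text to match against). However, there is a genuine gap at the rigidity step. You assert that ``applying the same monotonicity to time-shifts of $u^1$ itself forces $V^i(x,s+h)-V^i(x,s)$ to be independent of $(x,s)$.'' Monotonicity alone only tells you that $\max_x\bigl(V^i(x,s+h)-V^i(x,s)\bigr)=M_h$ and $\min_x\bigl(V^i(x,s+h)-V^i(x,s)\bigr)=m_h$ are each constant in $s$; it does not give $M_h=m_h$, so you cannot yet conclude spatial constancy. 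Without that, the reduction to the eigenvalue problem does not go through.

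What closes the gap is the strong maximum principle together with the Hopf lemma for the linearized parabolic problem with oblique boundary condition. Since $w(x,s):=V^i(x,s+h)-V^i(x,s)$ solves a linear uniformly parabolic equation on the eternal domain $\overline\Omega\times\mathbb{R}$ with constant $\max$ and $\min$ in $s$, the strong maximum principle forces $w$ to be spatially constant once one rules out a strict boundary extremum. At a boundary maximum of $w$ the tangential gradient vanishes, so the linearized boundary relation becomes $\beta\cdot Dw=0$ with $\beta\cdot\Vec{\mathbf{n}}=1-(1-q)\phi^2(1+|p^*|^2)^{-q}>0$ (uniformly, by the compatibility bound $|\phi v^{-q}|<1$), and Hopf's lemma then gives $\beta\cdot Dw>0$ unless $w$ is constant — a contradiction. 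This is precisely the ``equality case'' of the comparison principle, and it is the key ingredient: your closing paragraph flags the linearization at a boundary extremum for the comparison principle itself, but you still treat the constancy of $V^i(x,s+h)-V^i(x,s)$ as automatic, when in fact it is exactly where the strong maximum principle/Hopf argument must be invoked. Once that is supplied, the rest of your deduction (identifying $V^i(\cdot,0)$ as an eigenfunction, using uniqueness of $\lambda$ and of eigenfunctions up to additive constants to force $\ell=0$) is correct.
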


We also study the large time behavior of solutions of \eqref{eq:levelset}. We go though the same procedure as we do in Theorem \ref{thm:eigengraph}. During the limit process in which we send $k$ to $0$, the gradient estimates remain uniform in the viscosity parameter $\eta\in(0,1]$, which allows us to find a viscosity solution of the stationary problem \eqref{eq:eigenlevelset}.

\begin{theorem}\label{thm:eigenlevelset}
Let $\Omega$ be a $C^{\infty}$ bounded domain in $\mathbb{R}^n,\ n\geq2$. Suppose that $c$ satisfies \eqref{condition:c}. For $\phi\in C^{\infty}(\overline{\Omega})$, there exists a unique $\lambda\in\mathbb{R}$ such that there exists a viscosity solution $w$ of \eqref{eq:eigenlevelset}. Moreover, $\lambda=\lim_{t\to\infty}\frac{u(x,t)}{t}$ and the convergence as $t\to\infty$ is uniform in $x\in\overline{\Omega}$, where $u$ is the unique viscosity solution of \eqref{eq:levelset} with $T=\infty$.
\end{theorem}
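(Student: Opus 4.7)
The plan is to mimic the proof of Theorem \ref{thm:eigengraph} while inserting a vanishing viscosity step to handle the degenerate level-set operator. Concretely, I would study \eqref{eq:eigen} with $q=1$, regularization parameter $\eta\in(0,1]$, and damping $k>0$. The maximum principle applied at interior extrema, using the boundedness of $c$ and $f$ together with the boundary condition, delivers $\|k u_{k,\eta}\|_{L^\infty(\overline{\Omega})}\le C$ independently of $k$ and $\eta$, while the gradient bound built in Theorem \ref{thm:global-grad}, adapted to the stationary problem \eqref{eq:eigen}, yields $\|Du_{k,\eta}\|_{L^\infty(\overline{\Omega})}\le L$ uniformly in both $k\in(0,1]$ and $\eta\in(0,1]$. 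The Leray-Schauder fixed point theorem then produces a smooth solution $u_{k,\eta}$, where, as emphasized in the introduction, the homotopy is arranged to preserve the coercivity of $c$ throughout.

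Next I would send $k\to 0$ with $\eta>0$ held fixed. Pick a reference point $x_0\in\overline{\Omega}$ and set $\lambda_{k,\eta}:=-ku_{k,\eta}(x_0)$ and $w_{k,\eta}:=u_{k,\eta}-u_{k,\eta}(x_0)$; both are uniformly bounded, the first in $\mathbb{R}$ and the second in $C^1(\overline{\Omega})$. For fixed $\eta>0$ the equation is uniformly elliptic, so Schauder theory upgrades the estimates to a uniform $C^{2,\sigma}$ bound for $w_{k,\eta}$. Extracting a subsequence as $k\to 0$ gives a smooth pair $(\lambda_\eta,w_\eta)$ solving \eqref{eq:eigen} with $k=0$ and $q=1$.

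I would then perform the vanishing viscosity step $\eta\to 0$. The crucial point is that the $C^1$ bound on $w_\eta$ and the bound on $\lambda_\eta$ remain uniform in $\eta\in(0,1]$ by construction, so Arzel\`a-Ascoli produces a subsequence with $\lambda_\eta\to\lambda$ in $\mathbb{R}$ and $w_\eta\to w$ uniformly on $\overline{\Omega}$ with $w$ Lipschitz. Standard stability of viscosity sub- and supersolutions under vanishing viscosity, combined with the Barles-Perthame half-relaxed limit formulation for oblique boundary conditions, then shows that $(\lambda,w)$ solves \eqref{eq:eigenlevelset} in the viscosity sense. This is the main obstacle: passing from classical solutions of the uniformly elliptic regularization to viscosity solutions of the degenerate problem, while simultaneously handling the Neumann condition in the viscosity sense, requires care in choosing test functions both in the interior and along $\partial\Omega$, and one must be careful not to destroy the coercivity machinery set up in the earlier steps.

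Finally, uniqueness of $\lambda$ and the asymptotic formula come together from a single comparison argument. The pair $V^\pm(x,t):=\lambda t+w(x)\pm C$ is a viscosity sub/supersolution of the parabolic equation in \eqref{eq:levelset} for any $C>0$, and choosing $C$ so that $V^-(x,0)\le u_0(x)\le V^+(x,0)$ on $\overline{\Omega}$, the comparison principle for \eqref{eq:levelset} underlying the uniqueness in Theorem \ref{thm:local-grad} gives
$$
|u(x,t)-\lambda t-w(x)|\le C\qquad\text{for all }(x,t)\in\overline{\Omega}\times[0,\infty),
$$
so $u(x,t)/t\to\lambda$ uniformly in $x\in\overline{\Omega}$. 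If a second pair $(\lambda',w')$ also solved \eqref{eq:eigenlevelset}, the same argument would give $u(x,t)/t\to\lambda'$, forcing $\lambda'=\lambda$ and establishing uniqueness of the eigenvalue.
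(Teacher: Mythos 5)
Your proposal follows essentially the same route as the paper: Leray--Schauder with coercivity-preserving homotopy to solve the discounted regularized problem with uniform $C^0$ and $C^1$ bounds, then the limit $k\to 0$ at fixed $\eta$ via Schauder and Arzel\`a--Ascoli, then the vanishing viscosity limit $\eta\to 0$ using stability of viscosity solutions, and finally the comparison principle with $\lambda t + w(x) \pm C$ to identify $\lambda$ as the asymptotic speed and derive its uniqueness from uniqueness of $u$. The only cosmetic deviations are normalizing $w_{k,\eta}$ by a point value rather than the spatial mean (both work given the $C^1$ bound) and a sign flip in your definition $\lambda_{k,\eta}:=-ku_{k,\eta}(x_0)$, which should be $ku_{k,\eta}(x_0)$ so that the limit is $\lambda$ rather than $-\lambda$ in the convention of \eqref{eq:eigenlevelset}.
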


The questions on classifying viscosity solutions $w$ of \eqref{eq:eigenlevelset}, and on whether or not $u(x,t)-\lambda t$ converges to a stationary solution $w$ as $t\to\infty$ are challenging, and they are still widely open. For partial resolutions, we refer to \cite{GOS, JKMT}, where a Lyapunov function is used.

In the radially symmetric setting, we can prove the convergence of $u(x,t)-\lambda t$ to a stationary solution $w$ as $t\to\infty$. Moreover, we are able to compute the eigenvalue $\lambda$ and the large time profile $w$ of the solution $u$ based on the optimal control formula. We will see in Chapter \ref{sec:radial} that the curves $c(r)$ and $\frac{n-1}{r}$ meet at most one point on $[0,R]$ because of the coercivity assumption \eqref{condition:c} on $c$. This fact allows us to follow the argument in \cite{GMT} overall, with the dynamics suggested in \cite{I2}, called the Skorokhod problem.

We also note that the eigenvalue $\lambda=\lim_{t\to\infty}\frac{u(x,t)}{t}$ is constant in $x\in\overline{\Omega}$, but this is under the condition \eqref{condition:c}. We will find an example in the radially symmetric setting, where the limit $\lim_{t\to\infty}\frac{u(x,t)}{t}$ is not constant, which thus disobeys \eqref{condition:c}. It turns out this example demonstrates that the condition \eqref{condition:c} is optimal, which we will discuss in Section \ref{sec:radial}.

\begin{theorem}\label{thm:radprofile}
Assume the radially symmetric setting \eqref{con:radial}. Assume \eqref{condition:c}. Let $u=u(r,t)$ be the unique radial viscosity solution of \eqref{eq:levelset}, and let $(\lambda,w)$ be a pair of a real number and a Lipschitz continuous function satisfying \eqref{eq:eigenlevelset} in the sense of viscosity solutions. Then,

\noindent (i) $u(r,t)-\lambda t\to w(r)$ as $t\to\infty$ uniformly in $r\in[0,R]$, and

\noindent (ii) the asymptotic speed $\lambda$ and the asymptotic profile $w$ are described as follows; if the curves $r\mapsto c(r)$ and $r\mapsto\frac{n-1}{r}$ cross at $r\in[0,R]$, then such numbers $r$ are unique, which we call $r_{cr}$. If the curves do not cross on the interval $[0,R]$, we let $r_{cr}:=\infty$. Then,
\begin{equation}\label{formula:eigenvalue}
\lambda=\sup\left\{-f(r)+\delta(r-R)\phi(R)\left(\frac{n-1}{R}+\mathrm{sgn}(\phi(R))c(R)\right):\ r\geq r_{cr}\textrm{ or }r=R\right\},  
\end{equation}
where $\delta$ is the function on $\mathbb{R}$ having its value 1 at the origin, 0 elsewhere, and the asymptotic profile $w$ is given by
\begin{equation}\label{formula:asympprofile}
w(r)=\max\left\{d(r,s)+w_0(s):\ s\in\widetilde{\mathcal{A}}\right\}.
\end{equation}
Here,
\begin{equation}\label{formula:distance}
d(r_0,r_1):=\sup\left\{\int_{0}^{t}-f(\eta(s))-\phi(\eta(s))l(s)ds:\ t\geq0,\  (\eta,l)\in\mathcal{C}(0,t;r_0,r_1)\right\}
\end{equation}
for any $r_0,r_1\in[0,R]$, where we set
\begin{multline*}
\mathcal{C}(0,t;r_0,r_1):=\left\{(\eta,l)\in\mathrm{AC}([0,t];(0,R])\times L^{\infty}([0,t]):\right.\\
\left.\ \eta(0)=r_0,\ \eta(t)=r_1,
\ (\eta,v,l)\in\mathrm{SP}(r_0)\right\}, 
\end{multline*}
and $\mathrm{SP}(r)$ denotes the Skorokhod problem, and
\begin{align*}
&w_0(r):=\max\left\{d(r,\rho)+u_0(\rho):\ \rho\in[0,R]\right\},\\
\widetilde{\mathcal{A}}:=\left\{r\geq r_{cr}\right.&\left.:\ \textrm{the supremum of \eqref{formula:eigenvalue} is attained}\right\}\qquad\text{if }r_{cr}<\infty.
\end{align*}
If $r_{cr}=\infty$, we let $\widetilde{\mathcal{A}}:=\{R\}$.
\end{theorem}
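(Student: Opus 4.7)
The plan is to reduce \eqref{eq:levelset} in the radially symmetric setting to a one-dimensional Hamilton--Jacobi equation on $[0,R]$, encode the Neumann condition at $r=R$ via the Skorokhod reflection of \cite{I2}, and then adapt the weak-KAM / Aubry-set machinery of \cite{GMT} to the boundary-constrained dynamics. Theorem \ref{thm:eigenlevelset} already delivers existence of $\lambda=\lim_{t\to\infty}u/t$, so the task is to identify $\lambda$ and $w$ via the variational formulas and then upgrade this to convergence of the full profile.

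First I would write $u(x,t)=g(|x|,t)$; the identity $|Du|\,\mathrm{div}(Du/|Du|)=(n-1)g_r/r$ turns \eqref{eq:levelset} into
\[
g_t=\tfrac{n-1}{r}g_r+c(r)|g_r|-f(r)\ \text{ in }(0,R)\times(0,\infty),
\]
with boundary condition $g_r(R,t)=\phi(R)$ and $g_r(0,t)=0$ by symmetry. Dualizing the right-hand side gives the controlled dynamics of a particle $\eta(\cdot)$ whose velocity is confined to the band $[\tfrac{n-1}{\eta}-c(\eta),\tfrac{n-1}{\eta}+c(\eta)]$, while the Neumann condition at $r=R$ is encoded by a non-negative Skorokhod reflection density $l$ charging $-\phi(r)l$ to the running cost. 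This is precisely the admissible class $\mathrm{SP}(r)$ entering the distance $d$ of \eqref{formula:distance}.

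Next I would exploit the coercivity \eqref{condition:c} to pin down the stationary structure. At a zero $r_{cr}$ of $c(r)-\tfrac{n-1}{r}$ one has $c(r_{cr})^2/(n-1)=(n-1)/r_{cr}^2$, so \eqref{condition:c} forces
\[
\bigl(c(r)-\tfrac{n-1}{r}\bigr)'(r_{cr})=c'(r_{cr})+\tfrac{n-1}{r_{cr}^2}>\delta,
\]
so $r\mapsto c(r)-(n-1)/r$ strictly increases through any zero, and hence $r_{cr}$ is unique when it exists. Combined with $c^2\geq(n-1)\delta$ (so that $c$ has constant sign), this identifies the interior stationary radii of the controlled dynamics as exactly $\{r\geq r_{cr}\}$, each carrying the instantaneous value $-f(r)$. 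A trajectory pinned at the wall $r=R$ carries the extra boundary contribution $\phi(R)(\tfrac{n-1}{R}+\mathrm{sgn}(\phi(R))c(R))$ coming from the asymptotic rate of the Skorokhod measure. These two mechanisms exhaust the long-time invariant candidates, which yields formula \eqref{formula:eigenvalue} for $\lambda$; the Aubry set $\widetilde{\mathcal A}$ is then by definition the set of maximizers. For the profile, $d(\cdot,s)-\lambda t$ is a viscosity subsolution of \eqref{eq:eigenlevelset} for every $s$, and $w$ defined by \eqref{formula:asympprofile} is verified as a viscosity solution by direct comparison with $d(\cdot,s)+w_0(s)$.

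Finally, for part (i) I would combine the Lax--Oleinik-type representation
\[
u(r,t)=\sup\bigl\{u_0(\eta(0))+\textstyle\int_0^t\bigl(-f(\eta)-\phi(\eta)l\bigr)\d s\,:\,(\eta,l)\in\mathrm{SP}(r),\ \eta(t)=r\bigr\}
\]
with the identification of $\widetilde{\mathcal A}$: showing that any sequence of near-maximizing trajectories accumulates on $\widetilde{\mathcal A}$ as $t\to\infty$ yields $u-\lambda t\to w$ uniformly. The main obstacle, and the novelty compared to \cite{GMT}, is a dynamics proof handling trajectories that spend unbounded time at the wall $r=R$: one must track the Skorokhod reflection $l$ along the maximizers and show that the only asymptotic balance between the velocity band $[\tfrac{n-1}{r}\pm c(r)]$ and the boundary pressure $\phi(R)$ is either the interior equilibrium $r\equiv r_{cr}$ or the pinned state at $r=R$ with the prescribed sign of $\phi(R)$, which is precisely the description of $\widetilde{\mathcal A}$. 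Once this $\omega$-limit characterization of maximizers is established, uniform convergence in (i) follows from the standard monotonicity/contraction argument for HJ semigroups on the compact interval $[0,R]$.
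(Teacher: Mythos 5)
Your setup matches the paper's: the reduction of \eqref{eq:levelset} to the radial first-order equation, the concave Hamiltonian and its Lagrangian, the representation formula through the Skorokhod problem $\mathrm{SP}(r)$ of \cite{I2}, the uniqueness of $r_{cr}$ from \eqref{condition:c} (your argument that $r\mapsto c(r)-\tfrac{n-1}{r}$ increases strictly through any zero is a valid, slightly cleaner variant of the paper's argument at a hypothetical second crossing), and the identification of \eqref{formula:eigenvalue} and of $\widetilde{\mathcal{A}}$ from the velocity band and the boundary rate of the reflection term. Up to that point the proposal is sound and essentially the paper's route.

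The gap is in part (i). Saying that ``near-maximizing trajectories accumulate on $\widetilde{\mathcal A}$'' and then invoking ``the standard monotonicity/contraction argument for HJ semigroups'' does not close the proof: the semigroup is only non-expansive in the sup norm, which by itself gives no convergence, and an $\omega$-limit statement for maximizers, even if proved, does not by itself produce the limit profile \eqref{formula:asympprofile}. What is actually needed, and what the paper supplies, are two concrete ingredients you do not provide. First, monotonicity of $t\mapsto u(r,t)-\lambda t$ at every $r\in\widetilde{\mathcal A}$, proved dynamically by embedding a triple of $\mathrm{SP}(r)$ on $[0,t_1]$ into one on $[0,t_2]$ that first waits at $r$ for time $t_2-t_1$; the waiting costs exactly $\lambda$ per unit time precisely because $r\in\widetilde{\mathcal A}$, and combined with the bound $|u-\lambda t|\le C$ from Theorem \ref{thm:eigenlevelset} this gives convergence on $\widetilde{\mathcal A}$. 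Second, the uniqueness-set (comparison) property of $\widetilde{\mathcal A}$ for the stationary problem \eqref{eq:eigenlevelset}, which transfers convergence from $\widetilde{\mathcal A}$ to all of $[0,R]$ via the half-relaxed limits; in this boundary setting this is not a citation to \cite{GMT} but requires the new argument of Proposition \ref{prop:compprior}, namely that when $R\notin\widetilde{\mathcal A}$ a solution cannot have a corner from below on $(M,R)$, because the forced value of $w_r(R)$ would contradict the Neumann condition $w_r(R)=\phi(R)$. Your sketch also misstates the invariant structure at the end (the interior equilibria are all $r\ge r_{cr}$, not only $r\equiv r_{cr}$) and does not treat the two cases that need separate arguments: $r_{cr}=\infty$, where all trajectories drift right with speed at least $\delta$ and $u-\lambda t$ stabilizes in finite time, and $R\in\widetilde{\mathcal A}$, where the sign of $\phi(R)$ governs whether trajectories starting at the wall must stay pinned. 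Without these steps the passage from the control formula to uniform convergence of $u-\lambda t$ is not established.
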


\medskip

\subsection{Discussions and our main ideas}\label{subsec:1.3} In the following, we first discuss the necessity of a nonzero force in order to get a global gradient estimate and its geometric interpretation. Next, we outline the approaches taken to obtain the results of this paper. %We conclude the introduction by introducing the study of the additive eigenvalue problem and that of the large time behavior with a list of related works, which is not exhaustive at all.

We start with the special case of \eqref{eq:levelset} when $c(x,z)\equiv0,\ f(x,z)\equiv0,\ \phi(x)\equiv0$, which corresponds to the homogeneous Neumann boundary problem with zero force. Paper \cite{GOS} obtains a global gradient estimate for the problem on a convex domain, and additionally, \cite{GOS} describes an example, which is constructed rigorously in \cite{O} as well, on a nonconvex domain where the global gradient estimate fails. In this context, \cite{JKMT} provides the computation realizing the description, which means we need a nonzero force on a nonconvex domain to get a global estimate. Also, \cite{JKMT} studies the problem with a nonzero force $c=c(x)$, and it generally investigates the competition between the two geometric constraints, one from the normal velocity $V=k_1+c$ where $k_1$ is $(n-1)$ times of the mean curvature, the other from the right angle condition of surfaces and $\partial\Omega$ given by the boundary condition.

\medskip

We now describe the approaches of this paper. We overall rely on the maximum principle to get \emph{a priori} gradient estimates. The difficult case is when a maximizer is on the boundary, where we cannot expect the maximum principle to hold as it is inside the domain. In \cite{WWX}, the difficulty is overcome by considering a slanted gradient in order to get rid of $u_{nn}$, the second derivative of a solution in the normal direction, which is hard to know from the maximum principle. In \cite{JKMT}, the difficulty is handled by considering a multiplier which allows us to put the maximizer inside, so that we can apply the maximum principle. This idea is the crux of the multiplier method, which plays a main role in the estimates in \cite{JKMT}. Moreover, the multiplier method explains how the geometry of the domain affects the estimates, which is natural and geometric. It ultimately enables us to generalize the results of \cite{WWX} on nonconvex domains in a natural way for a wide class of equations \eqref{eq:graph} and \eqref{eq:levelset}. This is how we overcome the difficulty, and it is the main novelty of this paper.

To outline the structure of gradient estimates, we start by observing that both of the methods are relying on the same major term coming from the square norm of the second fundamental form. This is the reason why it is possible to apply the two methods at the same time, and why the process of mix is linear and natural. The whole chain of inequalities starts with applying the maximum principle, and is basically an expansion of a polynomial in $v=\sqrt{\eta^2+|Du|^2}$. Finally, we focus on the coefficient of the highest power of $v$, which yields the coercivity condition \eqref{condition:c} on $c$. We also note that we can get rid of bad terms in the linearized equation.

After we get a global gradient estimate, we next study the mean curvature equations and the large time behavior, as suggested in \cite{WWX}. The part different from \cite{WWX} is where we apply Leray-Schauder fixed point theorem for the mean curvature equations. As we deal with the additional term concerning a nonzero force, we interpolate \eqref{eq:eigen} with a carefully chosen equation so that we keep the force $c$ coercive during the interpolation. A force that is being kept coercive yields a uniform $C^1$ estimate by the gradient estimate obtained above. As an exchange for keeping coercivity in the interpolation, we change the transport term $f$, and this is allowed as long as it is \emph{a priori} bounded. We then follow \cite{WWX} to verify the asymptotic behavior for the graph case, and go through vanishing viscosity process as $\eta\to0$ for the level-set case.

For the level-set mean curvature flow, we compute the eigenvalue and the large time profile, and prove the asymptotic behavior in the radially setting. Equation \eqref{eq:levelset} is reduced to a first-order singular Hamilton-Jacobi equation with Neumann boundary conditions. Based on the optimal control formula \cite{I2}, we are able to compute the eigenvalue. By providing an example where the eigenvalue is not constant, we discuss the optimality of the condition \eqref{condition:c}, which serves as the most important condition to ensure global gradient estimates. The use of the optimal control formula for computing the limit and for an example in this way follows \cite{JKMT}, and it is extended to an equation with a transport term and nonzero boundary conditions. Then, by observing the monotonicity on the Aubry set as in \cite{GMT}, we prove the asymptotic behavior. To deal with the boundary, which does not appear in \cite{GMT}, we instead give a dynamics proof for the monotonicity, written in the style of \cite{DS}.

\medskip

%Before we move on to the next section, we introduce the additive eigenvalue problem and the large time behavior very briefly with by no means an exhaustive list of related articles.

\subsection*{Organization of the paper}

In Section \ref{sec:gradestim}, we prove the existence of solutions of \eqref{eq:graph} and \eqref{eq:levelset} by giving \emph{a priori} local and global gradient estimates. We also recover \cite[Theorem 1.1]{WWX} and the corresponding result for \eqref{eq:levelset} when the domain $\Omega$ is strictly convex. In Section \ref{sec:homogenization}, we prove the existence of solutions of \eqref{eq:eigengraph} and \eqref{eq:eigenlevelset} through homogenization. In Section \ref{sec:radial}, we compute the eigenvalue and the large time profile, and prove the asymptotic behavior of the solution of \eqref{eq:levelset} in the radially symmetric setting. In Appendix, we provide the definitions and the results on the comparison principle and on the stability of viscosity solutions of \eqref{eq:levelset}.

%%%%%%%%%%%%%%%%%%%%%%%%%%%%%%%%%%%%%%%%%%%%%%%%%%%%%%%%%%%%%%%%%%%%%

\section{Gradient estimates}\label{sec:gradestim}
In this section, we give \emph{a priori} local gradient estimates of \eqref{eq}, and under the condition \eqref{condition:c} on the forcing term $c$, we prove \emph{a priori} global gradient estimates. Throughout this section, we assume that the conditions \eqref{assumtion:c} and \eqref{assumtion:f} hold, and that $\Omega$ is bounded with $C^3$ boundary.

We leave a remark that for the choice $\eta=1,\ q>0$ in \eqref{eq:graph}, the function $u_0$ serves as an initial data that is compatible with the boundary condition. In \eqref{eq:levelset}, by setting $q=1$, we see that the function $u_0$, which is independent of $\eta\in(0,1]$, serves as an initial data that is compatible with the boundary condition even if $\eta\in(0,1]$ varies. We understand its viscosity solution as the limit of solutions of \eqref{eq} as $\eta\to0$. We also note from the compatibility condition that $|\phi v^{-q}|<1$ on the boundary $\partial\Omega$.

The following lemma states that the time derivative of a solution of \eqref{eq} is bounded. %Before we start a proof, we make it clear that throughout this section, the functions $c=c(x,z),\ f=f(x,z)\in\overline{\Omega}\times\mathbb{R}$ are assumed to satisfy \eqref{assumtion:f} and \eqref{assumtion:c}, respectively.

\begin{lemma}\label{lem:time-grad}
Suppose that $u^{\eta}$ is the unique solution of \eqref{eq} for each $\eta\in(0,1]$. Suppose \eqref{assumtion:f} and \eqref{assumtion:c}. Fix $T\in(0,\infty)$. 
Then, there exists $M>0$ depending only on $\Omega$, $c$, $f$, $\phi$, $q$, $u_0$  such that
$$
\lVert u^{\eta}_t\rVert_{L^{\infty}(\overline{\Omega}\times[0,T])}\leq\lVert u^{\eta}_t(\cdot,0)\rVert_{L^{\infty}(\overline{\Omega})} \leq M.
$$
\end{lemma}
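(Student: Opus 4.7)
The plan is to apply the parabolic maximum principle to $w:=u^\eta_t$, which satisfies a linear parabolic initial-boundary value problem obtained by formally differentiating \eqref{eq} in $t$. Writing the PDE as $u_t = G(x,u,Du,D^2u)$ with
$$
G(x,z,p,X) := \tr\Bigl[\Bigl(I - \tfrac{p\otimes p}{\eta^2+|p|^2}\Bigr)X\Bigr] + c(x,z)\sqrt{\eta^2+|p|^2} - f(x,z),
$$
differentiation in $t$ gives $w_t = G_{X_{ij}}\,w_{ij} + G_{p_i}\,w_i + G_z\,w$ in $\Omega\times(0,T)$, with positive semi-definite leading matrix and zero-order coefficient
$$
G_z = c_z(x,u)\sqrt{\eta^2+|Du|^2} - f_z(x,u) \leq 0
$$
by the sign conditions $c_z\leq 0$ and $f_z\geq 0$ in \eqref{assumtion:c}--\eqref{assumtion:f}. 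This is the classical setup for a weak parabolic maximum principle.

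Next I would differentiate the capillary-type boundary condition $Du\cdot\Vec{\mathbf{n}} = \phi(x)v^{1-q}$ in $t$ to get a homogeneous oblique condition $\beta\cdot Dw = 0$ on $\partial\Omega\times(0,T)$, where $\beta := \Vec{\mathbf{n}} - (1-q)\phi(x)\,v^{-q-1}\,Du$. Substituting the boundary condition itself, $Du\cdot\Vec{\mathbf{n}} = \phi v^{1-q}$, yields
$$
\beta\cdot\Vec{\mathbf{n}} = 1 - (1-q)\phi(x)^2 v^{-2q}.
$$
For $q\geq 1$ this is $\geq 1$; for $0<q<1$, the inequality $|\phi v^{-q}|<1$ noted just before the lemma gives $\beta\cdot\Vec{\mathbf{n}}\geq q>0$. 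Thus $w$ satisfies a strictly oblique homogeneous boundary condition, with the obliqueness constant depending only on $q$ and on the compatibility of $u_0$ (not on $\eta$).

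Combining the positive semi-definite principal part, $G_z\leq 0$, and the strict obliqueness, the standard weak parabolic maximum principle (with a Hopf-lemma step at the boundary to rule out a maximum of $\pm w$ on $\partial\Omega\times(0,T]$, where $\beta\cdot Dw=0$ would otherwise contradict a strictly positive oblique derivative) gives
$$
\lVert w\rVert_{L^\infty(\overline\Omega\times[0,T])} \leq \lVert w(\cdot,0)\rVert_{L^\infty(\overline\Omega)},
$$
which is the first inequality. For the second, I would simply evaluate the equation at $t=0$: $u^\eta_t(\cdot,0) = G(x,u_0,Du_0,D^2u_0)$, whose $L^\infty$ norm is controlled by a constant $M$ depending only on $\lVert u_0\rVert_{C^2(\overline\Omega)}$, $\lVert c\rVert_\infty$, and $\lVert f\rVert_\infty$, and is uniform in $\eta\in(0,1]$ since $\sqrt{\eta^2+|Du_0|^2}\leq\sqrt{1+|Du_0|^2}$.

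The main delicate point is the obliqueness verification and the boundary step: ensuring $\beta\cdot\Vec{\mathbf{n}}$ is uniformly positive in both regimes $\eta=1,\,q>0$ and $\eta\in(0,1],\,q=1$, and carefully applying the Hopf lemma to a linear parabolic equation whose principal part is only positive semi-definite. Once those points are handled, the rest is classical; note that no structural condition on $c$ beyond the sign assumption (in particular, not \eqref{condition:c}) is needed, since we are only bounding $u_t$, not $Du$.
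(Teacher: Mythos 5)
Your proposal is correct and is essentially the same approach the paper takes, which simply defers to its cited reference for this lemma: differentiate the equation and the boundary condition in $t$, note that the sign conditions $c_z\leq 0$, $f_z\geq 0$ make the zero-order coefficient nonpositive, verify that the differentiated boundary condition is strictly oblique (using $|\phi v^{-q}|<1$ when $0<q<1$), and conclude by the parabolic maximum principle together with the Hopf boundary lemma. One minor clarification on the point you flagged as delicate: for fixed $\eta\in(0,1]$ with $u^\eta\in C^{2,\sigma}(\overline\Omega\times[0,T])$, the principal matrix $a(Du^\eta)$ has smallest eigenvalue $\eta^2/(\eta^2+|Du^\eta|^2)$ bounded below by a positive constant on $\overline\Omega\times[0,T]$, so the linearized operator is uniformly parabolic and the Hopf lemma you need is the standard one, not a degenerate version.
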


\begin{proof}
The proof follows the argument in \cite[Lemma 2.1]{WWX}.
\end{proof}

Now we state \emph{a priori} gradient estimates.

\begin{proposition}\label{prop:exuniquegra}
Let $T\in(0,\infty),\ \eta\in(0,1]$. Suppose that a solution $u^{\eta}$ of \eqref{eq} exists and it is of class $C^{2,\sigma}(\overline{\Omega}\times[0,T])\cap C^{3,\sigma}(\Omega\times(0,T])$ for some $\sigma\in(0,1)$. Suppose that the force $c$ satisfies \eqref{condition:c}. Then $u^{\eta}$ satisfies that
$$
\lVert Du^{\eta}\rVert_{L^{\infty}(\overline{\Omega}\times[0,T])}\leq R,
$$
where $R>1$ is a constant depending only on $\Omega, c, f, \phi, q, u_0$.%independent of $T\in(0,\infty)$ and of $\eta\in(0,1]$.
\end{proposition}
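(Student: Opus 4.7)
The plan is to apply the maximum principle to a carefully constructed auxiliary function of the form $\Phi(x,t) = v(x,t)\,\psi(x,u(x,t))$, where $v=\sqrt{\eta^2+|Du|^2}$ and $\psi$ is a positive multiplier chosen (following the multiplier method of \cite{JKMT}) so that the outward normal derivative of $\Phi$ on $\partial\Omega$ is strictly negative whenever $v$ is large. Concretely, I would take $\psi$ to be an exponential in a $C^3$ extension of the signed distance to $\partial\Omega$ (or an exponential in $u$ in the style of \cite{WWX}), with one multiplicative factor absorbing the capillary boundary data $\phi v^{1-q}$ and another factor encoding the interior ball radius $K_0$ and the boundary curvature bound $C_0$. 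The role of this multiplier is to rule out boundary maxima on nonconvex domains: using the boundary condition $u_{\nu}=\phi v^{1-q}$, a direct computation of $\Phi_{\nu}$ should yield a term of the form $-\delta'\,v^2\psi$ plus lower order, which contradicts $\Phi_{\nu}(x_0,t_0)\ge 0$ at a boundary maximum. This is the combined ingredient from \cite{JKMT} and \cite{WWX}.

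Next, assume the maximum of $\Phi$ is achieved at an interior point $(x_0,t_0)$ with $t_0>0$; the case $t_0=0$ gives a bound by $u_0$ and the compatibility condition. At $(x_0,t_0)$ one has $D\Phi=0$, $\Phi_t\ge 0$, and $\Phi$ is a supersolution of the linearization of \eqref{eq}. Differentiating the PDE in \eqref{eq} in $x_k$, multiplying by $u_k/v$, and summing yields an evolution inequality for $v$ whose leading "good" term is the squared norm of the second fundamental form of the graph. Using the algebraic inequality $|A|^2\ge H^2/(n-1)$, together with $H = -c+f/v$ on the graph of $u$, this good term produces the coefficient $\tfrac{1}{n-1}c(x,u)^2$ of $v^2$. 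The spatial gradients of $c$ and $f$ contribute, respectively, $-|Dc|\,v^2$ and an $O(v)$ term, while the $u$-monotonicity in \eqref{assumtion:c}--\eqref{assumtion:f} ($c_z\le 0$, $f_z\ge 0$) discards sign-bad lower order terms.

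Combining the interior inequality with the first-order condition $D\Phi(x_0,t_0)=0$ (which lets one replace $D v$ in terms of $Du,\psi,D\psi$), I obtain a polynomial inequality in $v=v(x_0,t_0)$ whose highest-order coefficient is, up to the constants coming from the geometry of $\psi$,
\[
\frac{1}{n-1}c(x_0,u(x_0,t_0))^2 - |Dc(x_0,u(x_0,t_0))| - C_0|c|-\tfrac{(n-1)C_0}{K_0}-(1+q)\sgn(C_0)C_0^2,
\]
which is exactly what \eqref{condition:c} assumes to be bounded below by a positive $\delta$. Hence the polynomial inequality forces $v(x_0,t_0)\le R$ for some $R$ depending only on $\Omega, c, f, \phi, q, u_0$ (through $\delta$, $C_0$, $K_0$, $\|u_0\|_{C^1}$, $\|\phi\|_{C^3}$, the $C^{1}$ bounds on $c,f$, and the time-derivative bound from Lemma \ref{lem:time-grad}). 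Using $\Phi(x,t)\le \Phi(x_0,t_0)$ and $\psi$ uniformly bounded above and below, this yields $\|Du^\eta\|_{L^\infty(\overline\Omega\times[0,T])}\le R$.

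The main obstacle is step one: the boundary analysis. Computing $\Phi_{\nu}$ on $\partial\Omega$ requires differentiating $v^2=\eta^2+|Du|^2$ along tangential directions, invoking the Codazzi-type identity $u_{\nu j}=\phi_{x_j}v^{1-q}+\phi(1-q)v^{-q}\tfrac{u_k u_{kj}}{v}-\kappa^{\ell j}u_\ell$, and controlling all terms using only $C_0, K_0$ and $\|\phi\|_{C^3}$. This is where the $(1+q)\sgn(C_0)C_0^2$ and the $(n-1)C_0/K_0$ terms in \eqref{condition:c} originate; one has to choose the exponents and coefficients in the multiplier $\psi$ so that these boundary contributions assemble into exactly the right-hand side of \eqref{condition:c}, no more. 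Once this tuning is achieved, the interior maximum principle argument is essentially routine, and the final bound follows directly from \eqref{condition:c} with the fixed gap $\delta>0$.
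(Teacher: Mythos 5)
There is a genuine gap in the boundary analysis, and it is fundamental to the case $\phi\not\equiv 0$ that this proposition treats.

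Your auxiliary function $\Phi = v\,\psi(x,u)$ is purely multiplicative, so its outward normal derivative on $\partial\Omega$ reads $\Phi_\nu = \psi\,v_\nu + v\bigl(\psi_x\cdot\nu + \psi_u\,u_\nu\bigr)$. The second summand involves only first derivatives of $u$ (via $u_\nu = \phi v^{1-q}$). The trouble is in $v_\nu$: writing $v_\nu = \tfrac{1}{v}\sum_j u_j u_{j\nu}$ and splitting into tangential and normal indices, the tangential contributions are indeed controlled by the Codazzi-type identity you cite, but the normal one is
\[
\frac{u_\nu u_{\nu\nu}}{v} = \phi\, v^{-q}\,u_{\nu\nu},
\]
which contains the pure normal-normal second derivative $u_{\nu\nu}$. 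No choice of the multiplier $\psi(x,u)$ can absorb this: every term $\psi$ contributes to $\Phi_\nu$ is at most first order in $u$. So when $\phi\neq 0$ the sign of $\Phi_\nu$ at a boundary maximizer cannot be determined, and the intended contradiction never materializes. (For the homogeneous Neumann case $\phi\equiv 0$ this term vanishes, which is precisely why the purely multiplicative method of \cite{JKMT} works there.)

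The paper avoids this obstruction by using an \emph{additive} correction in the function it maximizes: $w = v^{q+1} - (q+1)\phi\,Du\cdot Dh$, with $h$ chosen so that $Dh = \Vec{\mathbf{n}}$ on $\partial\Omega$. In $w_\nu$, the piece $(v^{q+1})_\nu = (q+1)v^q v_\nu$ contributes $(q+1)v^q\cdot\phi v^{-q}u_{\nu\nu} = (q+1)\phi\, u_{\nu\nu}$, while the piece $-(q+1)\bigl(\phi\,Du\cdot Dh\bigr)_\nu$ contributes $-(q+1)\phi\,(Du_\nu\cdot Dh) = -(q+1)\phi\,u_{\nu\nu}$ since $Dh=\Vec{\mathbf{n}}$ there. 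The exponent $q+1$ and the prefactor $(q+1)$ are chosen so that these two contributions cancel exactly; this is the point the proof emphasizes explicitly as "the cancellation of the terms $\pm\overline{\phi}\,\partial^2\overline{u}/\partial y_n^2$". Only after that cancellation does one obtain $\partial w/\partial\Vec{\mathbf{n}} \le L\,w$ with $L=(q+1)(C_0+\varepsilon_0)$, and only then does the paper bring in the spatial multiplier $\rho(x) = -\tfrac{L}{2K_0}|x-x_c|^2 + \tfrac{LK_0}{2} + 1$ to push the maximizer of $\rho w$ into the interior of the inscribed ball, so that the maximum principle can be applied. Your subsequent interior Bernstein argument (squared second fundamental form giving $c^2/(n-1)$, monotonicity in $z$ killing bad signs, and \eqref{condition:c} supplying the positive gap) is in the right spirit, but without the additive $\phi\,Du\cdot Dh$ correction you never legitimately reach a point where the maximum principle applies.
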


Once we prove Proposition \ref{prop:exuniquegra} (and Proposition \ref{prop:exuniquegratime} introduced later), we obtain the existence of solutions $u=u^{\eta}$ to \eqref{eq} with the bound $\lVert Du^{\eta}\rVert_{L^{\infty}(\overline{\Omega}\times[0,T])}\leq R$ (and therefore prove Theorem \ref{thm:global-grad}), due to the standard theory of quasilinear uniformly parabolic equations, for which we refer to \cite{LSU}. See \cite[Section 5]{MT} for the usage of \cite{LSU}, \cite[Theorem 8.8]{L2}. We also briefly describe the existence from \emph{a priori} estimates in Appendix for completeness.

Before getting into the proof of Proposition \ref{prop:exuniquegra}, we introduce the notations for scalars, vectors, and matrices. After that, we state Lemma \ref{lem:boundary} and Lemma \ref{lem:afternotation} for later use, whose proofs are provided in Appendix.

We set notations. Let $p,q\in\mathbb{R}^n$ be column vectors and $M$ be a symmetric $n$ by $n$ matrix. A real number $p\cdot q$ is the scalar obtained from the standard inner product of $\mathbb{R}^n$, and we let $|p|=\sqrt{p\cdot p}$. A vector $Mp$ is the vector obtained from the standard matrix product. Let $\alpha=\left(\alpha^{ij}\right)_{i,j=1}^n,\beta=\left(\beta^{ij}\right)_{i,j=1}^n$ be two $n$ by $n$ matrices that are not necessarily symmetric. We let $\alpha\beta$ denote the matrix obtained from the standard matrix multiplication of $\alpha$ in the left and $\beta$ in the right. We write $\textrm{tr}\{\alpha\beta^{\textrm{Tr}}\}=\sum_{i,j=1}^n\alpha^{ij}\beta^{ij}$, where $\textrm{tr}\{\cdot\}$ denotes the trace, and $\textrm{Tr}$ denotes the transpose. We let $\|\alpha\|=\sqrt{\textrm{tr}\{\alpha\alpha^{\textrm{Tr}}\}}$.

For a $C^1$ function $\mu$ in $x=(x_1,\cdots,x_n)$, we let $\mu_i$ denote the partial derivative $\mu_{x_i}$ of $\mu$ in $x_i$ for each $i=1,\cdots,n$, and we let $D\mu=\left(\mu_1,\cdots,\mu_n\right)^{\textrm{Tr}}$ be the gradient of $\mu$. For a $C^2$ function, say $\mu$ again, in $x=(x_1,\cdots,x_n)$, we let $\mu_{ij}$ denote the second order partial derivative $\mu_{x_ix_j}$ of $\mu$ in $x_i$ and $x_j$ in order for each $i,j=1,\cdots,n$, and we let $D^2\mu=\left(\mu_{ij}\right)_{i,j=1}^n$ be the Hessian of $\mu$. For a $C^3$ function $\mu$ and a vector $\xi=(\xi^1,\cdots,\xi^n)^{\textrm{Tr}}$, we let $\mu_{\ell ij}$ denote the third order partial derivative $\mu_{x_{\ell}x_ix_j}$ of $\mu$ in $x_{\ell}$, $x_i$ and $x_j$ in order for each $\ell,i,j=1,\cdots,n$, and we let $D^3\mu\odot\xi$ denote the matrix $\left(\sum_{\ell=1}^n\mu_{\ell ij}\xi^{\ell}\right)_{i,j=1}^n$. For $\nu=(\nu^1,\cdots,\nu^n)^{\textrm{Tr}}$, $\nu^i$ a $C^1$ function for each $i=1,\cdots,n$, we let $D\nu$ denote the matrix $\left(\nu^i_{x_j}\right)_{i,j=1}^n$. Then, for a $C^2$ function $\mu$, we check that $D^2\mu=D(D\mu)$.

We define the matrix $a=a(p)$ by $a(p)=I_n-\frac{p\otimes p}{\eta^2+|p|^2}$, where $p\otimes p$ denotes the matrix $\left(p^ip^j\right)_{i,j=1}^n$ for $p=\left(p^1,\cdots,p^n\right)^{\textrm{Tr}}$, and $I_n$ denotes the $n$ by $n$ identity matrix. We let $p\otimes q$ denotes the matrix $\left(p^iq^j\right)_{i,j=1}^n$ for $p=\left(p^1,\cdots,p^n\right)^{\textrm{Tr}},q=\left(q^1,\cdots,q^n\right)^{\textrm{Tr}}\in\mathbb{R}^n$. For a vector $\xi=(\xi^1,\cdots,\xi^n)^{\textrm{Tr}}$, we let $D_pa\odot\xi$ denote the matrix
$$D_pa\odot\xi=\left(\sum_{\ell=1}^na_{p^{\ell}}^{ij}\xi^{\ell}\right)_{i,j=1}^n,$$
where $a_{p^{\ell}}^{ij}=a_{p^{\ell}}^{ij}(p)$ is the partial derivative of $a^{ij}$, the $(i,j)$-entry of the matrix $a$ for $i,j=1,\cdots,n$, in its $\ell$-th variable $p^{\ell}$ of $p=(p^1,\cdots,p^n)^{\textrm{Tr}}$.

Now, we give the setup for Lemma \ref{lem:boundary}. Suppose that $x_0=(0,\cdots,0)\in\partial\Omega$, and that $\Vec{\mathbf{n}}(x_0)=(0,\cdots,0,-1)$. Then, there exist an open neighborhood $U_1$ of $x_0$ in $\mathbb{R}^n$ and a $C^3$ function $\varphi$ defined on $\{x'=(x_1,\cdots,x_{n-1}):(x',0)\in U_1\}$ such that $x=(x',x_n)\in\partial\Omega$ if and only if $x_n=\varphi(x')$. The eigenvalues $\kappa_1,\cdots,\kappa_{n-1}$ of the matrix $D^2\varphi(x_0')$ are called the principal curvatures of $\partial\Omega$ at $x_0$, where $x_0'=(0,\cdots,0)\in\mathbb{R}^{n-1}$, and the corresponding eigenvectors are called the principal directions of $\partial\Omega$ at $x_0$.

By applying a rotation of coordinates to $x'=(x_1,\cdots,x_{n-1})$, we may assume that the $x_{\ell}-$axis lies along a principal direction corresponding to $\kappa_{\ell}$, $\ell=1,\cdots,n-1$, respectively. We call such a coordinate system a principal coordinate system of $\partial\Omega$ at $x_0$. The Hessian matrix $D^2\varphi(x_0)$ with respect to a principal coordinate system of $\partial\Omega$ at $x_0$ is given by the diagonal matrix, as
$$
D^2\varphi(x_0)=
\begin{bmatrix}
\kappa_{1} & &0 \\
& \ddots & \\
0& & \kappa_{n-1}
\end{bmatrix}.
$$

We state Lemma \ref{lem:boundary}, which provides a local parametrization $y'=(y_1,\cdots,y_{n-1})$ of the surface $\partial\Omega$ around $(0,\cdots,0)$ and the derivatives of $C^1$ (or $C^2$) functions in $y=(y_1,\cdots,y_n)$. See \cite[Lemma 14.16]{GL} for the reference of Lemma \ref{lem:boundary}.

\begin{lemma}\label{lem:boundary}
Let $x_0\in\partial\Omega$. For a coordinate $x=(x_1,\cdots,x_n)$ of $\mathbb{R}^n$, suppose that $x_0=(0,\cdots,0)$, and that $\Vec{\mathbf{n}}(x_0)=(0,\cdots,0,-1)$. Suppose also that $x'=(x_1,\cdots,x_{n-1})$ is a principal coordinate system of $\partial\Omega$ at $x_0$, i.e., the $x_{\ell}-$axis lies along a principal direction corresponding to a principal curvature $\kappa_{\ell}$ of $\partial\Omega$ at $x_0$, $\ell=1,\cdots,n-1$, respectively.

Then, there are open neighborhoods $U,V$ of $(0,\cdots,0)$ in $\mathbb{R}^n$ and a $C^2$ diffeomorphism $g:U\to V$, and there is a number $\sigma>0$ satisfying the following properties;

(i) It holds that $g(0,\cdots,0)=(0,\cdots,0)$, and that
$$
\{g(y',0):|y'|<\sigma\}\subseteq\partial\Omega\quad\textrm{ and }\quad\{g(y',y_n):|y'|+|y_n|<\sigma,\ y_n>0\}\subseteq\Omega.
$$
where $y'=(y_1,\cdots,y_{n-1})\in\mathbb{R}^{n-1}$, and

(ii)
$g$ is the identity function on the line $\{(0,\cdots,0,y_n):|y_n|<\sigma\}$.

\noindent If we write $x=g(y),\ y\in U,\ x\in V$, then

(iii)
$$
\frac{\partial\overline{\zeta}}{\partial y_{\ell}}=(1-\kappa_{\ell}y_n)\frac{\partial\zeta}{\partial x_{\ell}}\qquad\textrm{for }\ell=1,\cdots,n,
$$
on the line $\{(0,\cdots,0,y_n):|y_n|<\sigma\}$, which is a subset of $U$. Here, $\zeta=\zeta(x)$ is a $C^1$ function defined on $V$, $\overline{\zeta}(y)$ is the $C^1$ function defined by $\zeta(g(y))$ on $U$, and $\kappa_n$ is set to be $0$. The number $\sigma>0$ satisfies $\sigma^{-1}>\max\{|\kappa_1|,\cdots,|\kappa_{n-1}|\}$.

(iv)
$$
\frac{\partial}{\partial y_n}\left(\frac{\partial \overline{\zeta}}{\partial y_{\ell}}\right)=(1-\kappa_{\ell}y_n)\frac{\partial}{\partial y_n}\left(\frac{\partial\zeta}{\partial x_{\ell}}\right)-\frac{\kappa_{\ell}}{1-\kappa_{\ell}y_n}\frac{\partial\overline{\zeta}}{\partial y_{\ell}}\qquad\textrm{for }\ell=1,\cdots,n,
$$
on the line $\{(0,\cdots,0,y_n):|y_n|<\sigma\}$ if the functions $\zeta,\overline{\zeta}$ given as above are $C^2$ functions.

%(v)
%$$
%\frac{\partial}{\partial y_{\ell}}\left(\frac{\partial \overline{\zeta}}{\partial %y_{n}}\right)=\sum_{\ell=1}^{n-1}\left(\kappa_{\ell}\frac{\partial\overline{\zeta}}{\partial %y_{\ell}}\right)+\frac{\partial}{\partial y_{\ell}}\left(\frac{\partial\zeta}{\partial %x_n}\right)\qquad\textrm{for }\ell=1,\cdots,n-1,
%$$
%at $(0,\cdots,0)$.
\end{lemma}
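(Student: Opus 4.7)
The plan is to construct $g$ as the standard \emph{normal parametrization} of a tubular neighborhood of $\partial\Omega$ near $x_0$, namely
\[
g(y',y_n) \;=\; (y',\varphi(y')) \;-\; y_n\,\Vec{\mathbf{n}}(y',\varphi(y')),
\]
where, by the $C^3$ regularity of $\partial\Omega$ and the assumption $\Vec{\mathbf{n}}(x_0)=(0,\ldots,0,-1)$, the boundary is locally the graph $x_n=\varphi(x')$ with $\varphi(0)=0$ and $D\varphi(0)=0$; the outward normal is the $C^2$ vector field $\Vec{\mathbf{n}}(y')=(D\varphi(y'),-1)/\sqrt{1+|D\varphi(y')|^2}$. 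Plugging $y'=0$ into the formula gives $g(0,\ldots,0,y_n)=(0,\ldots,0,y_n)$, which proves (ii), and computing $Dg(0)=I_n$ lets the inverse function theorem produce the neighborhoods $U,V$ and the $C^2$ inverse. Properties in (i) follow from the identity that $g(y',0)=(y',\varphi(y'))\in\partial\Omega$ and from the fact that for small $y_n>0$ the displacement $-y_n\Vec{\mathbf{n}}$ points into $\Omega$.

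Next I would compute $Dg$ along the $y_n$-axis. Writing the components explicitly,
\[
g^k(y',y_n)=y_k - y_n\,\tfrac{\partial_k\varphi(y')}{\sqrt{1+|D\varphi|^2}}\quad(k\le n-1),\qquad
g^n(y',y_n)=\varphi(y')+\tfrac{y_n}{\sqrt{1+|D\varphi|^2}},
\]
and using $D\varphi(0)=0$ together with the principal-coordinate hypothesis $\partial^2_{k\ell}\varphi(0)=\kappa_\ell\delta^k_\ell$, one finds that at $(0,y_n)$ the Jacobian is the diagonal matrix $\operatorname{diag}(1-\kappa_1 y_n,\ldots,1-\kappa_{n-1}y_n,1)$ (with the convention $\kappa_n=0$). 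Choosing $\sigma$ small so that $\sigma^{-1}>\max_\ell|\kappa_\ell|$ keeps these diagonal entries positive, confirming that $g$ is a $C^2$ diffeomorphism on the required set. Formula (iii) then follows immediately from the chain rule:
\[
\tfrac{\partial\overline{\zeta}}{\partial y_\ell}(0,y_n)=\sum_{k=1}^n\tfrac{\partial\zeta}{\partial x_k}(g(y))\,\tfrac{\partial g^k}{\partial y_\ell}(0,y_n)=(1-\kappa_\ell y_n)\,\tfrac{\partial\zeta}{\partial x_\ell}(x).
\]

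For (iv), I differentiate the chain rule identity in $y_n$ and evaluate on the axis. This yields
\[
\tfrac{\partial}{\partial y_n}\!\left(\tfrac{\partial\overline{\zeta}}{\partial y_\ell}\right)=\sum_{k,m}\tfrac{\partial^2\zeta}{\partial x_m\partial x_k}\tfrac{\partial g^m}{\partial y_n}\tfrac{\partial g^k}{\partial y_\ell}+\sum_k\tfrac{\partial\zeta}{\partial x_k}\,\tfrac{\partial^2 g^k}{\partial y_n\partial y_\ell}.
\]
On the axis, $\partial g^m/\partial y_n=\delta^m_n$ (from (ii)) and the diagonal expression for $\partial g^k/\partial y_\ell$ just derived reduce the first sum to $(1-\kappa_\ell y_n)\partial_n\partial_\ell\zeta$. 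A short direct calculation, again using $D\varphi(0)=0$ and the principal-coordinate normalization, shows $\partial^2 g^k/(\partial y_n\partial y_\ell)(0,y_n)=-\kappa_\ell\delta^k_\ell$ for $k\le n-1$ and $0$ for $k=n$, so the second sum collapses to $-\kappa_\ell\,\partial\zeta/\partial x_\ell$. Using (iii) to rewrite $\partial\zeta/\partial x_\ell=(1-\kappa_\ell y_n)^{-1}\partial\overline{\zeta}/\partial y_\ell$ produces exactly the claimed identity (iv), with the case $\ell=n$ being the trivial identity $\partial^2\overline{\zeta}/\partial y_n^2=\partial^2\zeta/\partial x_n^2$ since $\kappa_n=0$.

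The main bookkeeping obstacle will be the second-derivative computation in (iv): one has to verify that all off-diagonal contributions $\partial^2 g^k/\partial y_n\partial y_\ell$ with $k\ne\ell$ vanish on the axis, which is precisely where the principal-coordinate hypothesis (diagonality of $D^2\varphi(0)$) is essential. Everything else is a transparent application of the inverse function theorem and the chain rule; the role of the $C^3$ regularity of $\partial\Omega$ is to guarantee that $\varphi\in C^3$, hence $g\in C^2$ and the identities (iii)--(iv) are differentiable expressions.
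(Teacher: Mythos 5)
Your construction coincides with the paper's: both define $g(y',y_n)=(y',\varphi(y'))-y_n\,\Vec{\mathbf{n}}(y',\varphi(y'))$, show $Dg$ is $\operatorname{diag}(1-\kappa_1 y_n,\ldots,1-\kappa_{n-1}y_n,1)$ on the $y_n$-axis, invoke the inverse function theorem, and derive (iii)--(iv) by the chain rule. The only cosmetic difference is that the paper obtains (iv) by differentiating the identity (iii) directly in $y_n$, whereas you differentiate the full chain-rule expansion and then evaluate on the axis; these are equivalent computations.
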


We introduce the following lemma in advance, which will be used in the proof of Proposition \ref{prop:exuniquegra}.

\begin{lemma}\label{lem:afternotation}
Let $u\in C^{2,\sigma}(\overline{\Omega}\times[0,T])\cap C^{3,\sigma}(\Omega\times(0,T])$, and let $v=\sqrt{\eta^2+|Du|^2}$ for $T\in(0,\infty),\ \eta\in(0,1]$. Let $\xi\in\mathbb{R}^n$. Then,
\begin{align}\label{auxiliaryr}
v\textrm{tr}\{(D_pa(Du)\odot\xi)D^2u\}+2\textrm{tr}\{a(Du)(\xi\otimes Dv)\}=0.
\end{align}
\end{lemma}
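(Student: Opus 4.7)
The plan is to verify the identity by direct computation, since both sides unfold to polynomial expressions in $Du$, $D^2u$, $Dv$, and $\xi$ once we expand $a(p)=I_n-p\otimes p/(\eta^2+|p|^2)$. First I would differentiate $a^{ij}(p)=\delta^{ij}-p^ip^j/(\eta^2+|p|^2)$ in $p^\ell$ to obtain
$$(D_pa(p)\odot\xi)^{ij}=-\frac{\xi^ip^j+p^i\xi^j}{\eta^2+|p|^2}+\frac{2(p\cdot\xi)\,p^ip^j}{(\eta^2+|p|^2)^2},$$
and then evaluate at $p=Du$.

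The second ingredient is the elementary chain-rule identity $vv_j=\sum_i u_iu_{ij}$, obtained by differentiating $v^2=\eta^2+|Du|^2$ in $x_j$. Combining this with the symmetry of $D^2u$ gives
$$\sum_{i,j}u_i\xi^ju_{ij}=\sum_{i,j}\xi^iu_ju_{ij}=v(\xi\cdot Dv),\qquad \sum_{i,j}u_iu_ju_{ij}=v(Du\cdot Dv).$$
Substituting into the trace expression from the previous step and multiplying by $v$ I would obtain
$$v\,\textrm{tr}\{(D_pa(Du)\odot\xi)D^2u\}=-2(\xi\cdot Dv)+\frac{2(Du\cdot\xi)(Du\cdot Dv)}{v^2}.$$

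For the remaining term, I would use the matrix identity $(p\otimes q)(r\otimes s)=(q\cdot r)\,p\otimes s$ together with $\textrm{tr}\{\xi\otimes Dv\}=\xi\cdot Dv$, which yields
$$\textrm{tr}\{a(Du)(\xi\otimes Dv)\}=(\xi\cdot Dv)-\frac{(Du\cdot\xi)(Du\cdot Dv)}{v^2}.$$
Doubling this and adding to the previous display cancels both contributions and produces $0$, as claimed. No step is technically difficult; the only point requiring care is using $u_{ij}=u_{ji}$ to verify that the two contributions $\xi^ip^j$ and $p^i\xi^j$ in $(D_pa\odot\xi)^{ij}$ contract against $D^2u$ to equal terms.
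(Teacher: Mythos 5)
Your proposal is correct and follows essentially the same route as the paper: compute $D_p a\odot\xi$ explicitly, contract with $D^2u$ using $vDv=D^2u\,Du$ to reach $-2(\xi\cdot Dv)+2v^{-2}(Du\cdot\xi)(Du\cdot Dv)$, and identify this with $-2\textrm{tr}\{a(Du)(\xi\otimes Dv)\}$. The only cosmetic difference is that you work in coordinates while the paper phrases the same contractions via the trace identities $\textrm{tr}\{(p\otimes q)M\}=p\cdot(Mq)$ and $(p_1\cdot p_2)(q_1\cdot q_2)=\textrm{tr}\{(p_1\otimes q_1)(p_2\otimes q_2)\}$.
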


%Thus, we will be mostly concerned with the case when $v$ is \emph{large}, i.e., $v>R$, where $R>0$ is a constant, also uniform in $T\in(0,\infty)$ and also in $\eta\in(0,1]$ when $q=1$. Since we automatically achieve our goal when $v\leq R$, we will often assume the remaining case when $v>R$ even if it is not explicitly mentioned.

%The condition $q>0$ plays a big role throughout the estimate, as we will rearrange terms according to the degree $k$ of powers $v^k$. The highest power is $v^{q+1}$, which is distinguished from $v$. We will focus on the degrees overall, and will keep track of the coefficient of $v^{q+1}$.

%As we will focus on the degrees, and we assume that $v$ is \emph{large}, we use the $O$-notation as $v\to\infty$. We give the precise meaning of the $O$-notation as $v\to\infty$.

\begin{proof}[Proof of Proposition \ref{prop:exuniquegra}]
The proof of Proposition \ref{prop:exuniquegra} follows the classical Bernstein method by applying the maximum principle to the function $w:=v^{q+1}-(q+1)\phi Du\cdot Dh$, where $v:=\sqrt{\eta^2+|Du|^2}$.

Let $T\in(0,\infty)$, $\eta\in(0,1]$. Let $u=u^{\eta}\in C^{2,\sigma}(\overline{\Omega}\times[0,T])\cap C^{3,\sigma}(\Omega\times(0,T])$ be a solution to \eqref{eq} for some $\sigma\in(0,1)$. We need to show that $\lVert v\rVert_{L^{\infty}(\overline{\Omega}\times[0,T])}\leq R$ for some constant $R>1$ independent of $T\in(0,\infty)$ and of $\eta\in(0,1]$. Throughout the proof, $R>1$ will denote constants which vary line by line and which do not depend on $T\in(0,\infty)$ and also on $\eta\in(0,1]$. Note that $\eta$ is fixed to be $1$ when $q>0$, and $\eta\in(0,1]$ when $q=1$. Accordingly, $\eta\in(0,1]$ in all cases. Also, $C>0$ will denote constants which vary line by line throughout the proof and also which do not depend on $T\in(0,\infty)$ and also on $\eta\in(0,1]$.

We drop the super and subscript regarding $\eta$, but we are still dealing with \eqref{eq} together with the $\eta$-dependence when $q=1$, which is of importance for \eqref{eq:levelset}. Once we obtain bounds uniform in $\eta\in(0,1]$, we also drop the $\eta$-dependence throughout the estimate.

\medskip

Let $h$ be a function in $C^3(\overline{\Omega})$ such that $h\equiv C,\ Dh=\Vec{\mathbf{n}}$ on the boundary $\partial\Omega$ for some constant $C$. Let $$w=v^{q+1}-(q+1)\phi Du\cdot Dh$$ on $\overline{\Omega}\times[0,T]$. The reason why we choose this $w$ instead of $v=\sqrt{\eta^2+|Du|^2}$ is that we want to cancel out terms involving $\frac{\partial^2u}{\partial \Vec{\mathbf{n}}^2}$, the second derivative of $u$ in the normal direction on the boundary. The reason will be explained with more details when the cancellation occurs.

Fix $(x_0,t_0)\in\argmax_{\overline{\Omega}\times[0,T]}w$. The goal is to show that $v(x_0,t_0)\leq R$ for some constant $R>1$ independent of $T\in(0,\infty),\ \eta\in(0,1]$. Once it is shown, then we obtain $\lVert v\rVert_{L^{\infty}(\overline{\Omega}\times[0,T])}\leq R$, which completes the proof. This is seen by the fact that
$$
w\leq v^{q+1}+(q+1)\|\phi\|_{C^0(\overline{\Omega})}\|h\|_{C^1(\overline{\Omega})}
$$
at $(x_0,t_0)$, and by the fact that
$$
v^{q+1}-(q+1)\|\phi\|_{C^0(\overline{\Omega})}\|h\|_{C^1(\overline{\Omega})}\leq w\leq w(x_0,t_0)\leq R
$$
at $(x,t)\in\overline{\Omega}\times[0,T]$.

If $t_0=0$, we get a uniform bound $v(x_0,t_0)\leq R$, so we are done. It remains the case when $t_0>0$, and we divide the proof into two cases: $x_0\in\Omega$ and $x_0\in\partial\Omega$.

\medskip

\noindent {\bf Case 1: $x_0\in\Omega$.}

\medskip

\emph{Step 1.} We apply the maximum principle at $(x_0,t_0)$ and simplify the resulting inequality.

\medskip

As $x_0\in\Omega,\ t_0>0,$ the maximum principle yields $D^2w\leq0,\ w_t\geq0$ at $(x_0,t_0)$. Therefore, together with the fact that $a(p)\geq0$ as a matrix, we obtain
\begin{equation}\label{maxprinciple}
0\geq\frac{1}{q+1}\left(\textrm{tr}\{a(Du)D^2w\}-w_t\right)\quad\text{ at }(x_0,t_0).
\end{equation}
This is the point where we start a chain of inequalities.

Write $u_t=G+cv-f$, where $G:=\textrm{tr}\{a(Du)D^2u\}$, so that \eqref{maxprinciple} becomes
\begin{align}
0&\geq\frac{1}{q+1}\left(\textrm{tr}\{a(Du)D^2w\}-w_t\right)\nonumber\\
&=\textrm{tr}\{a(Du)D(v^qDv)\}-\textrm{tr}\{a(Du)D^2(\phi Du\cdot Dh)\}-(v^qv_t-\phi Du_t\cdot Dh)\nonumber\\
&=\textrm{tr}\{a(Du)D(v^qDv)\}-\textrm{tr}\{a(Du)D^2(\phi Du\cdot Dh)\}\nonumber\\
&\qquad\qquad\qquad+(-v^{q-1}Du+\phi Dh)\cdot DG+(-v^{q-1}Du+\phi Dh)\cdot D(cv-f)\label{maxprincipleexpandedr}
\end{align}
at $(x_0,t_0)$. Here, we have used the fact that $vv_t=Du\cdot Du_t$.

For the first term $\textrm{tr}\{a(Du)D(v^qDv)\}$ of \eqref{maxprincipleexpandedr}, we substitute $D(v^qDv)=qv^{q-1}Dv\otimes Dv+v^qD^2v$ to get
$$
\textrm{tr}\{a(Du)D(v^qDv)\}=qv^{q-1}\textrm{tr}\{a(Du)Dv\otimes Dv\}+v^q\textrm{tr}\{a(Du)D^2v\}.
$$
We first check that $vD^2v=Q a(Du) D^2u+D^3u\odot Du$ with $Q=D^2u$. Differentiating $vDv=D^2u Du$, and using the fact that $p\otimes q=pq^{\textrm{Tr}}$ for two vectors $p,q$, we get
\begin{align*}
vD^2v&=D^3u\odot Du+(D^2u)^2-Dv\otimes Dv\\
&=(D^2u)^2-\frac{D^2u Du}{v}\otimes\frac{D^2u Du}{v}+D^3u\odot Du\\
&=QI_nQ-Q\left(\frac{Du}{v}\otimes\frac{Du}{v}\right) Q+D^3u\odot Du\\
&=Q a(Du) Q+D^3u\odot Du.
\end{align*}
Therefore,
\begin{align}
\textrm{tr}\{a(Du)D(v^qDv)\}=v^{q-1}\textrm{tr}\{(a(Du)D^2u)^2\}+qV+X_1,\label{term1}
\end{align}
where $V:=v^{q-1}\textrm{tr}\{a(Du)Dv\otimes Dv\}$ and $X_1:=v^{q-1}\textrm{tr}\{a(Du)(D^3u\odot Du)\}$.

To compute the second term of \eqref{maxprincipleexpandedr}, we expand $D^2(\phi Du\cdot Dh)$ so that
\begin{align*}
D^2(\phi Du\cdot Dh)&=(Du\cdot Dh)D^2\phi+(D^2uDh+D^2hDu)\otimes D\phi+D\phi\otimes(D^2uDh+D^2hDu)\\
&\qquad\qquad\qquad\qquad+\phi(D^3u\odot Dh+D^2uD^2h+D^3h\odot Du+D^2hD^2u).
\end{align*}
Since $\textrm{tr}\{a(p)(q\otimes r)\}=\textrm{tr}\{a(p)(r\otimes q)\},\ \textrm{tr}\{a(p)AB\}=\textrm{tr}\{a(p)BA\}$ for $p,q,r\in\mathbb{R}^n$, symmetric matrices $A,B$, we obtain
$$
\textrm{tr}\{a(Du)D^2(\phi Du\cdot Dh)\}=2\textrm{tr}\{a(Du)(D\phi\otimes (D^2uDh))\}+2\phi\textrm{tr}\{a(Du)D^2uD^2h\}+X_2+J_1,
$$
where $X_2:=\phi\textrm{tr}\{a(Du)(D^3u\odot Dh)\}$ and
$$
J_0:=(Du\cdot Dh)\textrm{tr}\{a(Du)D^2\phi\}+2\textrm{tr}\{a(Du)(D\phi\otimes(D^2hDu))\}+\phi\textrm{tr}\{a(Du)(D^3h\odot Du)\}.
$$
Applying Cauchy-Schwarz inequality to the terms of $J_0$, we see that there exists a constant $C>0$ independent of $T\in(0,\infty),\ \eta\in(0,1]$ such that
\begin{align*}
J_0&=(Du\cdot Dh)\textrm{tr}\{a(Du)D^2\phi\}+2\textrm{tr}\{a(Du)(D\phi\otimes(D^2hDu))\}+\phi\textrm{tr}\{a(Du)(D^3h\odot Du)\}\\
&\leq |Du||Dh|\|a\|\|D^2\phi\|+2\|a\||D\phi||D^2h Du|+|\phi|\|a\|\|D^3h\odot Du\|\\
&\leq Cv\|a\|\\
&\leq C\left(\frac{\eta^2}{v}+v\right).
\end{align*}
We have used the fact that $\|a\|=\left(\frac{\eta^4}{v^4}+n-1\right)^{1/2}\leq\frac{\eta^2}{v^2}+n-1$, that $\|p\otimes q\|=|p||q|$ for $p,q\in\mathbb{R}^n$. We also have used the fact that, seen again by Cauchy-Schwarz inequality,
\begin{align*}
|D^2h Du|&=\sqrt{\|(D^2h Du)\otimes (D^2h Du)\|}=\sqrt{\|D^2h Du Du^{\textrm{Tr}} D^2h^{\textrm{Tr}}\|}\\
&\leq\sqrt{\|D^2h\|\|Du Du^{\textrm{Tr}}\|\|D^2h^{\textrm{Tr}}\|}=\|D^2h\||Du|\leq\|D^2h\|v,
\end{align*}
and
\begin{align*}
\|D^3h\odot Du\|=\sqrt{\sum_{i,j=1}^n\left(\sum_{\ell=1}^nh_{ij\ell}u_{\ell}\right)^2}\leq\sqrt{\sum_{i,j=1}^n\left(\sum_{\ell=1}^nh_{ij\ell}^2\right)\left(\sum_{\ell=1}^nu_{\ell}^2\right)}\leq C|Du|\leq Cv,
\end{align*}
where $C>0$ is a constant depending on $\|h\|_{C^3(\overline{\Omega})}$. Since $\eta\in(0,1]$, we see that there exist constants $R>1,\ C>0$ independent of $T\in(0,\infty),\ \eta\in(0,1]$ such that
$$
J_0\leq Cv
$$
whenever $v>R$, and therefore that
\begin{align}
-\textrm{tr}\{a(Du)D^2(\phi Du\cdot Dh)\}&\geq-2\textrm{tr}\{a(Du)(D\phi\otimes (D^2uDh))\}-2\phi\textrm{tr}\{a(Du)D^2uD^2h\}\nonumber\\
&\qquad\qquad\qquad\qquad\qquad\qquad\qquad\qquad-X_2-Cv.\label{term2}
\end{align}
whenever $v>R$.

We compute the third term $(-v^{q-1}Du+\phi Dh)\cdot DG$ of \eqref{maxprincipleexpandedr}. By differentiating $G=\textrm{tr}\{a(Du)D^2u\}$ and taking inner product, we obtain
\begin{align*}
Du\cdot DG&=\textrm{tr}\{(D_pa(Du)\odot(D^2uDu))D^2u\}+\textrm{tr}\{a(Du)(D^3u\odot Du)\}\\
&=v\textrm{tr}\{(D_pa(Du)\odot Dv)D^2u\}+\textrm{tr}\{a(Du)(D^3u\odot Du)\}
\end{align*}
and
\begin{align*}
Dh\cdot DG=\textrm{tr}\{(D_pa(Du)\odot(D^2uDh))D^2u\}+\textrm{tr}\{a(Du)(D^3u\odot Dh)\}.
\end{align*}
Therefore,
\begin{align}
(-v^{q-1}Du+\phi Dh)\cdot DG&=-v^q\textrm{tr}\{(D_pa(Du)\odot Dv)D^2u\}\nonumber\\
&\ \ \ \ \ +\phi\textrm{tr}\{(D_pa(Du)\odot(D^2uDh))D^2u\}-X_1+X_2.\label{term3}
\end{align}
Recall that $X_1=v^{q-1}\textrm{tr}\{a(Du)(D^3u\odot Du)\}$ and $X_2=\phi\textrm{tr}\{a(Du)(D^3u\odot Dh)\}$.

Now, we compute and estimate the fourth term $(-v^{q-1}Du+\phi Dh)\cdot D(cv-f)$ of \eqref{maxprincipleexpandedr}. By expansion,
\begin{align*}
(-v^{q-1}Du+\phi Dh)\cdot D(cv-f)&=(-c_zv+f_z)(v^{q-1}|Du|^2-\phi Du\cdot Dh)\\
&\ \ \ +(-v^{q-1}Du+\phi Dh)\cdot(vDc-Df)+cDv\cdot(-v^{q-1}Du+Dh).
\end{align*}
Since $\eta\in(0,1]$, there exist constants $R>1,\ C>0$ independent of $T\in(0,\infty),\ \eta\in(0,1]$ such that
$$
v^{q-1}|Du|^2-\phi Du\cdot Dh\geq v^{q+1}-\eta^2v^{q-1}-\|\phi\|_{C^{0}(\overline{\Omega})}\|h\|_{C^{1}(\overline{\Omega})}v\geq0
$$
if $v>R$, and therefore that
$$
(-c_zv+f_z)(v^{q-1}|Du|^2-\phi Du\cdot Dh)\geq0
$$
if $v>R$. Here, we have used the assumption that $c_z\leq0,\ f_z\geq0$ from \eqref{assumtion:c}, \eqref{assumtion:f}. Also, again by \eqref{assumtion:c}, \eqref{assumtion:f}, there exists a constant $C>0$ independent of $T\in(0,\infty),\ \eta\in(0,1]$ such that
\begin{align*}
(-v^{q-1}Du+\phi Dh)\cdot(vDc-Df)&\geq-v^{q}|Du||Dc|-v^{q-1}|Du|\|Df\|_{C^0(\overline{\Omega}\times\mathbb{R})}\\
&\qquad-\|h\|_{C^{1}(\overline{\Omega})}|Dc|v-\|h\|_{C^{1}(\overline{\Omega})}\|Df\|_{C^0(\overline{\Omega}\times\mathbb{R})}\\
&\geq-|Dc|v^{q+1}-C(v+v^q).
\end{align*}
Therefore, there exist constants $R>1,\ C>0$ independent of $T\in(0,\infty),\ \eta\in(0,1]$ such that at $(x_0,t_0)$
\begin{align}
(-v^{q-1}Du+\phi Dh)\cdot D(cv-f)&\geq-|Dc|v^{q+1}-C(v+v^q)\nonumber\\
&\qquad\qquad\qquad+cDv\cdot(-v^{q-1}Du+\phi Dh)\label{term4}
\end{align}
whenever $v>R$. We will give a bound of the term $cDv\cdot(-v^{q-1}Du+\phi Dh)$ at $(x_0,t_0)$ later.

All in all, by the estimates \eqref{term1}, \eqref{term2}, \eqref{term3}, \eqref{term4}, we obtain that there exist constants $R>1,\ C>0$ independent of $T\in(0,\infty),\ \eta\in(0,1]$ such that at $(x_0,t_0)$,
\begin{align}
0&\geq\frac{1}{q+1}\left(\textrm{tr}\{a(Du)D^2w\}-w_t\right)\nonumber\\
&\geq J_1+J_2-|Dc|v^{q+1}+(q+1-\varepsilon)V-C(v+v^q)\label{maxprinciplesimplifiedr}
\end{align}
if $v>R$, where
\begin{align*}
J_1&:=(1-\varepsilon)v^{q-1}\textrm{tr}\{(a(Du)D^2u)^2\}-\frac{1}{2}v^q\textrm{tr}\{(D_pa(Du)\odot Dv)D^2u\}\\
&\qquad\qquad\qquad\qquad\qquad\qquad\qquad\qquad\qquad+cDv\cdot(-v^{q-1}Du+\phi Dh)\\
J_2&:=\varepsilon v^{q-1}\textrm{tr}\{(a(Du)D^2u)^2\}-\frac{1}{2}\varepsilon v^q\textrm{tr}\{(D_pa(Du)\odot Dv)D^2u\}\\
&\qquad\qquad-2\textrm{tr}\{a(Du)(D\phi\otimes (D^2uDh))\}-2\phi\textrm{tr}\{a(Du)D^2uD^2h\}\\
&\qquad\qquad\qquad\qquad\qquad\qquad\qquad\qquad+\phi\textrm{tr}\{(D_pa(Du)\odot(D^2uDh))D^2u\}.
\end{align*}
Here, $\varepsilon\in(0,1)$ is a number to be determined, and we have used the fact, from Lemma \ref{lem:afternotation} with $\xi=Dv$, that
$$
-\frac{1}{2}v^q\textrm{tr}\{(D_pa(Du)\odot Dv)D^2u\}=v^{q-1}\textrm{tr}\{a(Du)Dv\otimes Dv=V.
$$

\medskip

\emph{Step 2.} We estimate $J_1$.

\medskip
We first write, with $Q=D^2u$,
\begin{align*}
\textrm{tr}\{(a(Du)D^2u)^2\}&=\textrm{tr}\left\{\left(I_n-\frac{Du\otimes Du}{v^2}\right)Qa(Du)Q\right\}\\
&=\textrm{tr}\{a(Du)Q^2\}-\textrm{tr}\left\{a(Du)\left(\frac{D^2u Du}{v}\otimes\frac{D^2u Du}{v}\right)\right\}\\
&=\textrm{tr}\{a(Du)(D^2u)^2\}-\textrm{tr}\{a(Du)Dv\otimes Dv\}.
\end{align*}
Apply Cauchy-Schwarz inequality $\|\alpha\|^2\|\beta\|^2\geq\textrm{tr}\{\alpha\beta^{\textrm{Tr}}\}^2$ for $\textrm{tr}\{a(Du)(D^2u)^2\}$ with $\alpha=\sqrt{a} D^2u$, $\beta=\sqrt{a}$ to obtain
\begin{align*}
\textrm{tr}\{a(Du)(D^2u)^2\}=\|\alpha\|^2&\geq\frac{\textrm{tr}\{\alpha\beta^{\textrm{Tr}}\}^2}{\|\beta\|^2}=\frac{G^2}{n-1+\frac{\eta^2}{v^2}}\nonumber\\
&=\left(\frac{1}{n-1}-\frac{\eta^2}{v^2(n-1)\left(n-1+\frac{\eta^2}{v^2}\right)}\right)(u_t-cv+f)^2\\
&\geq\frac{1}{n-1}c^2v^2-Cv
%&\geq\frac{1}{n-1}c^2v^2-\frac{1}{v^2}(\|u_t\|_{C^0(\overline{\Omega}\times[0,T])}+\|c\|_{C^0(\overline{\Omega}\times\mathbb{R})}v+\|f\|_{C^0(\overline{\Omega}\times\mathbb{R})})^2\nonumber\\
%&-\frac{1}{n-1}\left(\|u_t\|^2_{C^0(\overline{\Omega}\times[0,T])}+\|f\|^2_{C^0(\overline{\Omega}\times\mathbb{R})}+2\|u_t\|_{C^0(\overline{\Omega}\times[0,T])}\|c\|_{C^0(\overline{\Omega}\times\mathbb{R})}v\right.\nonumber\\
%&\qquad\left.+2\|c\|_{C^0(\overline{\Omega}\times\mathbb{R})}\|f\|_{C^0(\overline{\Omega}\times\mathbb{R})}v+2\|u_t\|_{C^0(\overline{\Omega}\times[0,T])}\|f\|_{C^0(\overline{\Omega}\times\mathbb{R})}\right)\nonumber
\end{align*}
for some constant $C>0$ depending only on $\|f\|_{C^0(\overline{\Omega}\times\mathbb{R})},\|c\|_{C^0(\overline{\Omega}\times\mathbb{R})}$ and $M>0$ in Lemma \ref{lem:time-grad}. We have used Lemma \ref{lem:time-grad}, the assumptions \eqref{assumtion:c}, \eqref{assumtion:f} and the fact that $\eta\in(0,1]$. Therefore, there exist constants $R>1,\ C>0$ independent of $T\in(0,\infty),\ \eta\in(0,1]$ such that
$$
\textrm{tr}\{a(Du)(D^2u)^2\}\geq\frac{1}{n-1}c^2v^2-Cv
$$
if $v>R$, and thus such that
\begin{align}
(1-\varepsilon)v^{q-1}\textrm{tr}\{(a(Du)D^2u)^2\}\geq\frac{1-\varepsilon}{n-1}c^2v^{q+1}-(1-\varepsilon)V-Cv^q\label{term1J1}
\end{align}
if $v>R$, $\varepsilon\in(0,1)$. The number $\varepsilon\in(0,1)$ will be explicitly chosen later. We note that the term $\textrm{tr}\{(a(Du)D^2u)^2\}$ is used to derive the term $\frac{1}{n-1}c^2v^{q+1}$ as a lower bound, which is crucial to obtain the bound $v\leq R$.

For the third term of $J_1$, we claim that at $(x_0,t_0)$, it holds that
\begin{align}
|cDv\cdot(-v^{q-1}Du+\phi Dh)|\leq Cv\label{term3J1}
\end{align}
for some constant $C>0$ independent of $T\in(0,\infty),\ \eta\in(0,1]$. Note that $Dw=0$ at $(x_0,t_0)$, so that
\begin{align*}
0&=\frac{1}{q+1}Dw\cdot Du\\
&=v^qDu\cdot Dv-(Du\cdot D\phi)(Du\cdot Dh)-\phi (D^2uDu)\cdot Dh-\phi (D^2hDu)\cdot Du.
\end{align*}
This implies that at $(x_0,t_0)$,
$$
cDv\cdot(-v^{q-1}Du+\phi Dh)=-\frac{c}{v}\left((Du\cdot D\phi)(Du\cdot Dh)+\phi (D^2hDu)\cdot Du\right),
$$
and thus that at $(x_0,t_0)$,
\begin{align*}
|cDv\cdot(-v^{q-1}Du+\phi Dh)|&\leq \|c\|_{C^0(\overline{\Omega}\times\mathbb{R})}\left(\|\phi\|_{C^{1}(\overline{\Omega})}\|h\|_{C^{1}(\overline{\Omega})}+\|h\|_{C^{2}(\overline{\Omega})}\|\phi\|_{C^{0}(\overline{\Omega})}\right)\frac{1}{v}|Du|^2\\
&\leq Cv
\end{align*}
for some constant $C>0$ independent of $T\in(0,\infty),\ \eta\in(0,1]$. We have used the fact that $|Du|\leq v$ and the assumptions \eqref{assumtion:c}, \eqref{assumtion:f}.

Together with the fact that
\begin{align*}
-\frac{1}{2}v^q\textrm{tr}\{(D_pa(Du)\odot Dv)D^2u\}=v^{q-1}\textrm{tr}\{a(Du)Dv\otimes Dv\}=V,%\label{term2J1}
\end{align*}
and with \eqref{term1J1}, \eqref{term3J1}, we conclude that there exist constants $R>1,\ C>0$ independent of $T\in(0,\infty),\ \eta\in(0,1]$ such that
\begin{align}
J_1\geq\frac{1-\varepsilon}{n-1}c^2v^{q+1}+\varepsilon V-C(v+v^q)\label{J1r}
\end{align}
if $v>R$.

\medskip

\emph{Step 3.} We estimate $J_2$.

\medskip

Before we start the estimate of $J_2$, we rotate the axes at $x_0$ and compute the second derivatives of $u$ with respect to these axes. Take axes at $x_0$ such that
\begin{equation}\label{rotation1}
u_1=|Du|,\quad\quad u_i=0,\ i=2,\cdots,n,\quad\quad (u_{ij})_{2\leq i,j\leq n}\text{ is diagonal}.
\end{equation}
Then, $a^{ij}=a^{ij}(Du)$ is simplified as
\begin{equation}\label{rotation2}
a^{11}=\frac{\eta^2}{v^2},\quad\quad a^{ii}=1,\ i=2,\cdots n,\quad\quad a^{ij}=0,\ i\neq j.
\end{equation}
Using $Dw=0$ at $(x_0,t_0)$, we obtain, at $(x_0,t_0)$,
$$
v^{q-1}u_1u_{1i}-\phi \sum_{\ell=1}^nu_{\ell i}h_{\ell}=(\phi_ih_1+\phi h_{1i})u_1,\quad i=1,\cdots, n.
$$
For $i\geq2,$
$$
v^{q-1}u_1u_{1i}-\phi u_{1i}h_1-\phi u_{ii}h_i=(\phi_ih_1+\phi h_{1i})u_1,\quad i=2,\cdots, n,
$$
and thus,
\begin{align}
u_{1i}=E_iu_1+F_iu_{ii},\qquad i=2,\cdots,n,\label{secondderivative1}
\end{align}
where
\begin{align}
E_i:=\frac{\phi_ih_1+\phi h_{1i}}{v^{q-1}u_1-\phi h_1},\ \ \  F_i:=\frac{\phi h_{i}}{v^{q-1}u_1-\phi h_1},\qquad i=2,\cdots,n.\label{E_iF_i}
\end{align}
For $i=1$,
$$
v^{q-1}u_1u_{11}-\phi h_1u_{11}-\phi\sum_{\ell=2}^nh_{\ell}u_{1\ell}=(\phi_1h_1+\phi h_{11})u_1.
$$
As above, we get
\begin{align}
u_{11}&=E_1u_1+\sum_{\ell=2}^nF_{\ell}^2u_{\ell\ell},\label{secondderivative21}
\end{align}
where
\begin{align}
E_1:=\frac{\phi_1h_1+\phi h_{11}}{v^{q-1}u_1-\phi h_1}+\frac{\phi}{v^{q-1}u_1-\phi h_1}\sum_{{\ell}=2}^nh_{\ell}E_{\ell}.\label{E_1}
\end{align}

Now, we write $J_2=\varepsilon v^{q-1}\textrm{tr}\{(a(Du)D^2u)^2\}+S_1+S_2$, where
\begin{align*}
S_1&:=-2\textrm{tr}\{a(Du)(D\phi\otimes (D^2uDh))\}-2\phi\textrm{tr}\{a(Du)D^2uD^2h\},\\
S_2&:=-\frac{1}{2}\varepsilon v^q\textrm{tr}\{(D_pa(Du)\odot Dv)D^2u\}+\phi\textrm{tr}\{(D_pa(Du)\odot(D^2uDh))D^2u\},
\end{align*}
and we bound $S_1,\ S_2$.

We start with $S_1$. By expansion,
\begin{align*}
S_1&=-2\left(\frac{\eta^2}{v^2}\phi_1Du_1\cdot Dh+\sum_{\ell=2}^n\phi_{\ell}Du_{\ell}\cdot Dh+\frac{\eta^2}{v^2}\phi Du_1\cdot Dh_1+\phi\sum_{\ell=2}^nDu_{\ell}\cdot Dh_{\ell}\right)\\
&=-2\left(\frac{\eta^2}{v^2}Du_1\cdot(\phi_1Dh+\phi Dh_1)+\sum_{\ell=2}^nDu_{\ell}\cdot(\phi_{\ell}Dh+\phi Dh_{\ell})\right).
\end{align*}
Let $H_{\ell i}:=\phi_{\ell}h_i+\phi h_{\ell i}$ for each $\ell,i=1,\cdots,n$. Then,
\begin{align*}
S_1&=-2\left(\frac{\eta^2}{v^2}\sum_{\ell=1}^nu_{1\ell}H_{1\ell}+\sum_{\ell=2}^n(u_{1\ell}H_{\ell1}+u_{\ell\ell}H_{\ell\ell})\right)\\
&=-2\left(\frac{\eta^2}{v^2}u_{11}H_{11}+\sum_{\ell=2}^nu_{1\ell}\left(\frac{\eta^2}{v^2}H_{1\ell}+H_{\ell1}\right)+\sum_{\ell=2}^nu_{\ell\ell}H_{\ell\ell}\right).
\end{align*}
Using \eqref{secondderivative1}, \eqref{secondderivative21}, we get
\begin{align*}
S_1&=-2\left(\left(\frac{\eta^2}{v^2}H_{11}E_{1}+\sum_{\ell=2}^n\left(\frac{\eta^2}{v^2}H_{1\ell}+H_{\ell1}\right)E_{\ell}\right)u_1\right.\\
&\left.\qquad\qquad\qquad+\sum_{\ell=2}^n\left(\frac{\eta^2}{v^2}H_{11}F_{\ell}^2+\left(\frac{\eta^2}{v^2}H_{1\ell}+H_{\ell1}\right)F_{\ell}+H_{\ell\ell}\right)u_{\ell\ell}\right)
\end{align*}
Note that since $\eta\in(0,1]$,
$$
\left|\frac{\eta^2}{v^2}H_{11}E_{1}+\sum_{\ell=2}^n\left(\frac{\eta^2}{v^2}H_{1\ell}+H_{\ell1}\right)E_{\ell}\right|\leq Cv^{-q},
$$
$$
\left|\frac{\eta^2}{v^2}H_{11}F_{\ell}^2+\left(\frac{\eta^2}{v^2}H_{1\ell}+H_{\ell1}\right)F_{\ell}+H_{\ell\ell}\right|\leq C
$$
for $v>1$, for some constant $C>0$ that depends only on $\|\phi\|_{C^1(\overline{\Omega})},\ \|h\|_{C^2(\overline{\Omega})}$. Therefore, there exist constants $R>1,\ C>0$ independent of $T\in(0,\infty),\ \eta\in(0,1]$ such that
\begin{align}
S_1\geq-C\left(v^{1-q}+\sum_{\ell=2}^n|u_{\ell\ell}|\right)\label{S1}
\end{align}
for $v>R$.

Now, we estimate $S_2$. Applying Lemma \ref{lem:afternotation} with $\xi=Dv$ and with $\xi=D^2uDh$, and by expansion, we see that
\begin{align*}
S_2&=\varepsilon v^{q-1}\textrm{tr}\{a(Du)Dv\otimes Dv\}-\frac{2\phi}{v}\textrm{tr}\{a(Du)(D^2uDh\otimes Dv)\}\\
&=\varepsilon v^{q-1}\left(\frac{\eta^2}{v^2}v_1^2+\sum_{\ell=2}^nv_{\ell}^2\right)-\frac{2\phi}{v}\left(\frac{\eta^2}{v^2}v_1(Du_1\cdot Dh)+\sum_{\ell=2}^nv_{\ell}(Du_{\ell}\cdot Dh)\right).
\end{align*}
Let $K_{\ell}:=\phi v^{-1}(Du_{\ell}\cdot Dh)$ for each $\ell=1,\cdots,n$. Then,
\begin{align*}
S_2&=\frac{\eta^2}{v^2}(\varepsilon v^{q-1}v_1^2-2K_1v_1)+\sum_{\ell=2}^n(\varepsilon v^{q-1}v_{\ell}^2-2K_{\ell}v_{\ell})\\
&=\frac{\eta^2}{v^2}\left(\varepsilon v^{q-1}\left(v_1-\frac{K_1}{\varepsilon v^{q-1}}\right)^2-\frac{K_1^2}{\varepsilon v^{q-1}}\right)+\sum_{\ell=2}^n\left(\varepsilon v^{q-1}\left(v_{\ell}-\frac{K_{\ell}}{\varepsilon v^{q-1}}\right)^2-\frac{K_{\ell}^2}{\varepsilon v^{q-1}}\right)\\
&\geq-\varepsilon^{-1}v^{-1-q}K_1^2-\varepsilon^{-1}v^{1-q}\sum_{\ell=2}^nK_{\ell}^2.
\end{align*}
In the last inequality, we have used the fact that $\eta\in(0,1]$. By expansion and \eqref{secondderivative1}, \eqref{secondderivative21}, we have
\begin{align*}
K_1=\phi v^{-1}\left(h_1u_{11}+\sum_{\ell=2}^nh_{\ell}u_{1\ell}\right)=K_{11}u_1+\sum_{\ell=2}^nK_{1\ell}u_{\ell\ell},
\end{align*}
where
\begin{align}
K_{11}:=\phi v^{-1}\sum_{\ell=1}^nh_{\ell}E_{\ell},\ \ \ K_{1\ell}:=\phi v^{-1}(h_1F_{\ell}^2+h_{\ell}F_{\ell}),\ \ \ \textrm{for }\ell=2,\cdots,n.\label{K_11K_1l}
\end{align}
Then, by Cauchy-Schwarz inequality,
\begin{align*}
-K_1^2=-\left(K_{11}u_1+\sum_{\ell=2}^nK_{1\ell}u_{\ell\ell}\right)^2\geq-nK_{11}^2u_1^2-\sum_{\ell=2}^nnK_{1\ell}^2u_{\ell\ell}^2.
\end{align*}
Similarly, it holds that, for $\ell=2,\cdots,n$,
\begin{align*}
K_{\ell}=\phi v^{-1}\left(h_1u_{1\ell}+h_{\ell}u_{\ell\ell}\right)=K_{\ell1}u_1+K_{\ell\ell}u_{\ell\ell},
\end{align*}
where
\begin{align}
K_{\ell1}:=\phi v^{-1}h_1E_{\ell},\ \ \ K_{\ell\ell}:=\phi v^{-1}(h_1F_{\ell}+h_{\ell}),\ \ \ \textrm{for }\ell=2,\cdots,n,\label{K_l1K_ll}
\end{align}
and, by Cauchy-Schwarz inequality, for $\ell=2,\cdots,n$,
\begin{align*}
-K_{\ell}^2=-\left(K_{\ell1}u_1+K_{\ell\ell}u_{\ell\ell}\right)^2\geq-2K_{\ell1}^2u_1^2-2K_{\ell\ell}^2u_{\ell\ell}^2.
\end{align*}
Therefore,
\begin{align}
S_2&\geq-\varepsilon^{-1}v^{-1-q}K_1^2-\varepsilon^{-1}v^{1-q}\sum_{\ell=2}^nK_{\ell}^2\nonumber\\
&\geq-\varepsilon^{-1}S_{21}u_{1}^2-\varepsilon^{-1}\sum_{\ell=2}^nS_{2\ell}u_{\ell\ell}^2,\label{S2}
\end{align}
where
\begin{align}
S_{21}:=nv^{-1-q}K_{11}^2+2v^{1-q}\sum_{\ell=2}^nK_{\ell1}^2,\ S_{2\ell}:=nv^{-1-q}K_{1\ell}^2+2v^{1-q}K_{\ell\ell}^2,\ \ \textrm{for }\ell=2,\cdots,n.\label{S_21S_2l}
\end{align}

By \eqref{S1}, \eqref{S2}, we see that there exist constants $R>1,\ C>0$ independent of $T\in(0,\infty),\ \eta\in(0,1]$ such that
$$
J_2\geq\varepsilon v^{q-1}\sum_{\ell=2}^nu_{\ell\ell}^2-C(v^{1-q}+\sum_{\ell=2}^n|u_{\ell\ell}|)-\varepsilon^{-1}S_{21}u_1^2-\varepsilon^{-1}\sum_{\ell=2}^nS_{2\ell}u_{\ell\ell}^2
$$
for $v>R$. Note that there exists a constant $C>0$ that depends only on $\|\phi\|_{C^1(\overline{\Omega})},\ \|h\|_{C^2(\overline{\Omega})}$ such that, by \eqref{E_iF_i}, \eqref{E_1},
$$
|E_1|+\sum_{\ell=2}^n|E_{\ell}|+\sum_{\ell=2}^n|F_{\ell}|\leq Cv^{-q},
$$
for $v>1$, and in turn, by \eqref{K_11K_1l}, \eqref{K_l1K_ll}
$$
|K_{11}|+\sum_{\ell=2}^n|K_{1\ell}|+\sum_{\ell=2}^n|K_{\ell1}|+\sum_{\ell=2}^n|K_{\ell\ell}|\leq Cv^{-1-q},
$$
for $v>1$, and thus such that, by \eqref{S_21S_2l}
$$
|S_{21}|+\sum_{\ell=2}^n|S_{2\ell}|\leq Cv^{-1-3q}.
$$
Therefore, there exist constants $R>1,\ C>0$ independent of $T\in(0,\infty),\ \eta\in(0,1]$ such that
\begin{align*}
J_2&\geq\varepsilon v^{q-1}\sum_{\ell=2}^nu_{\ell\ell}^2-C(v^{1-q}+\sum_{\ell=2}^n|u_{\ell\ell}|)-C\varepsilon^{-1}v^{1-3q}-C\varepsilon^{-1}\sum_{\ell=2}^nv^{-1-3q}u_{\ell\ell}^2\\
&\geq-Cv^{1-q}-C\varepsilon^{-1}v^{1-3q}+\sum_{\ell=2}^n\left(v^{q-1}\left(\varepsilon-C\varepsilon^{-1}v^{-4q}\right)u_{\ell\ell}^2-C|u_{\ell\ell}|\right)
\end{align*}
for $v>R$.

For a given $\varepsilon\in(0,1)$, choose $R_{\varepsilon}>1$ that may depend on $\varepsilon\in(0,1)$ but not on $T\in(0,\infty),\ \eta\in(0,1]$ such that $C\varepsilon^{-1}v^{-4q}<\frac{\varepsilon}{2}$ if $v>R_{\varepsilon}$. Then, for $v>R_{\varepsilon}$,
\begin{align*}
J_2&\geq-Cv^{1-q}-C\varepsilon^{-1}v^{1-3q}+\sum_{\ell=2}^n\left(\frac{\varepsilon}{2}v^{q-1}u_{\ell\ell}^2-C|u_{\ell\ell}|\right)\\
&=-Cv^{1-q}-C\varepsilon^{-1}v^{1-3q}+\sum_{\ell=2}^n\left(\frac{\varepsilon}{2}v^{q-1}\left(|u_{\ell\ell}|-\frac{C}{\varepsilon v^{q-1}}\right)^2-\frac{C^2}{2\varepsilon v^{q-1}}\right).
\end{align*}

All in all, for each $\varepsilon\in(0,1)$, there exist a constant $C>0$ independent of $T\in(0,\infty),\ \eta\in(0,1]$ (also of $\varepsilon\in(0,1)$) and a constant $R_{\varepsilon}>1$ that may depend on $\varepsilon\in(0,1)$ but not on $T\in(0,\infty),\ \eta\in(0,1]$ such that
\begin{align}
J_2\geq-Cv^{1-q}(1+\varepsilon^{-1})\label{J2r}
\end{align}
for $v>R_{\varepsilon}$.

\medskip

\emph{Step 4.} We finish Case 1.

\medskip

We come back to the maximum principle \eqref{maxprinciplesimplifiedr} applied at $(x_0,t_0)$. By \eqref{J1r}, \eqref{J2r}, we see that there exist a constant $C>0$ independent of $T\in(0,\infty),\ \eta\in(0,1]$ (also of $\varepsilon\in(0,1)$) and a constant $R_{\varepsilon}>1$ that may depend on $\varepsilon\in(0,1)$ but not on $T\in(0,\infty),\ \eta\in(0,1]$ such that
$$
0\geq\left(\frac{1-\varepsilon}{n-1}c^2-|Dc|\right)v^{q+1}+(q+1)V-C(v+v^q)-Cv^{1-q}\varepsilon^{-1}
$$
for $v>R_{\varepsilon}$.
Now, we apply the condition \eqref{condition:c}; take
$$
\varepsilon=\frac{1}{2}\min\left\{1,\frac{(n-1)\delta}{\|c\|^2_{L^{\infty}(\overline{\Omega}\times\mathbb{R})}}\right\}.
$$
For this choice of $\varepsilon\in(0,1)$, it holds that $\frac{1-\varepsilon}{n-1}c^2-|Dc|\geq\frac{1}{2}\delta$, and $R_{\varepsilon},\ \varepsilon^{-1}$ are fixed. Therefore, by taking this choice of $\varepsilon\in(0,1)$, we see that there exist constants $R>1,\ C>0$ independent of $T\in(0,\infty),\ \eta\in(0,1]$ such that at $(x_0,t_0)$,
$$
0\geq\frac{\delta}{2}v^{q+1}-C(v+v^q)
$$
if $v>R$. Here, we have used the fact that $V\geq0$. On the other hand, there is also a constant $R_0>R$ independent of $T\in(0,\infty),\ \eta\in(0,1]$ such that
$$
0<\frac{\delta}{2}v^{q+1}-C(v+v^q)
$$
if $v>R_0$. Therefore, it must hold that $v=v(x_0,t_0)\leq R_0$, which completes Case 1.

\medskip

\noindent {\bf Case 2: $x_0\in\partial\Omega$.}

\medskip

\emph{Step 5.} We bound the normal derivative of $w$ at $(x_0,t_0)$ with a geometric constant.

\medskip

Recall that $C_0(x_0)=\max\{\lambda:\lambda\ \textrm{is an eigenvalue of}\ -\kappa\}$, where $\kappa:=\left(\kappa^{\ell j}\right)_{\ell,j=1}^{n-1}$ is the curvature matrix of $\partial\Omega$ at $x_0$, and that $C_0=\sup\{C_0(y):y\in\partial\Omega\}$. For $\varepsilon_0\in(0,1)$, we let $L=(q+1)\left(C_0+\varepsilon_0\right)$. The goal of this step is to prove that for any given number $\varepsilon_0\in(0,1)$, there exists a constant $R_{\varepsilon_0}>0$ which depends on $\varepsilon_0$ but not on $T\in(0,\infty),\ \eta\in(0,1]$ (also not on $x_0\in\partial\Omega$) such that $w>0$ and $\frac{\partial w}{\partial \Vec{\mathbf{n}}}<Lw$ at $(x_0,t_0)$ whenever $v>R_{\varepsilon_0}$.

%Let $\{y_{\ell}\}_{\ell=1}^n$ be a coordinate around $x_0\in\overline{\Omega}$ such that $\frac{\partial}{\partial y_n}$ is the geodesic flow with initial speed $-\Vec{\mathbf{n}}$ (or equivalently, the geodesic flow with initial speed $\Vec{\mathbf{n}}$ in a negative time, which we will use), and that $\{y_{\ell}\}_{\ell=1}^{n-1}$ is a geodesic coordinate around $x_0$ $\in\partial\Omega$.

Changing a coordinate on $\mathbb{R}^n$, we may assume without loss of generality that $x_0=(0,\cdots,0)$, $\Vec{\mathbf{n}}(x_0)=(0,\cdots,0,-1)$, and that $x'=(x_1,\cdots,x_{n-1})$ is a principal coordinate system of $\partial\Omega$ at $x_0$. We may assume that the $x_{\ell}-$axis lies along a principal direction corresponding to $\kappa_{\ell}$, $\ell=1,\cdots,n-1$, respectively. By Lemma \ref{lem:boundary}, there are open neighborhoods $U,V$ of $(0,\cdots,0)$ in $\mathbb{R}^n$ and a $C^2$ diffeomorphism $g:U\to V$, and there is a number $\sigma>0$ satisfying the properties (i), $\cdots$, (iv) of Lemma \ref{lem:boundary}. For each function $\zeta=u,v,w,\phi,h$ on $V\cap\overline{\Omega}$, we define the function $\overline{\zeta}$ on $U\cap g^{-1}(\overline{\Omega})=\{y=(y_1,\cdots,y_n):y\in U,\ y_n\geq0\}$ by $\overline{\zeta}=\zeta\circ g$. We let $y_0=g^{-1}(x_0)$. The different characters $x_0,y_0$ are used to distinguish where they belong to, i.e., the domains $V,U$ of definitions, respectively, though the both are the origin.

We introduce notations to denote vectors and derivatives in $y=(y_1,\cdots,y_n)$. For a $C^1$ function $\overline{\zeta}$ defined on $U$, let
\begin{align*}
\nabla\overline{\zeta}:=\left(\frac{\partial\overline{\zeta}}{\partial y_1},\cdots,\frac{\partial\overline{\zeta}}{\partial y_n}\right)^{\textrm{Tr}},\ \ \ \nabla'\overline{\zeta}:=\left(\frac{\partial\overline{\zeta}}{\partial y_1},\cdots,\frac{\partial\overline{\zeta}}{\partial y_{n-1}}\right)^{\textrm{Tr}},
\end{align*}
and for the $C^1$ function $\zeta:=\overline{\zeta}\circ g^{-1}$ on $V$, let
\begin{align*}
D\zeta:=\left(\frac{\partial\zeta}{\partial x_1},\cdots,\frac{\partial\zeta}{\partial x_n}\right)^{\textrm{Tr}},\ \ \ D'\zeta:=\left(\frac{\partial\zeta}{\partial x_1},\cdots,\frac{\partial\zeta}{\partial x_{n-1}}\right)^{\textrm{Tr}}.
\end{align*}
If $\zeta$ is a $C^2$ function on $V$, we let
\begin{align*}
\frac{\partial}{\partial y_n}\left(\nabla\overline{\zeta}\right)&:=\left(\frac{\partial}{\partial y_n}\left(\frac{\partial\overline{\zeta}}{\partial y_1}\right),\cdots,\frac{\partial}{\partial y_n}\left(\frac{\partial\overline{\zeta}}{\partial y_n}\right)\right)^{\textrm{Tr}},\\
\frac{\partial}{\partial y_n}\left(\nabla'\overline{\zeta}\right)&:=\left(\frac{\partial}{\partial y_n}\left(\frac{\partial\overline{\zeta}}{\partial y_1}\right),\cdots,\frac{\partial}{\partial y_n}\left(\frac{\partial\overline{\zeta}}{\partial y_{n-1}}\right)\right)^{\textrm{Tr}},\\
\frac{\partial}{\partial y_n}\left(D\zeta\right)&:=\left(\frac{\partial}{\partial y_n}\left(\frac{\partial\zeta}{\partial x_1}\right),\cdots,\frac{\partial}{\partial y_n}\left(\frac{\partial\zeta}{\partial x_n}\right)\right)^{\textrm{Tr}},\\
\frac{\partial}{\partial y_n}\left(D'\zeta\right)&:=\left(\frac{\partial}{\partial y_n}\left(\frac{\partial\zeta}{\partial x_1}\right),\cdots,\frac{\partial}{\partial y_n}\left(\frac{\partial\zeta}{\partial x_{n-1}}\right)\right)^{\textrm{Tr}}.
\end{align*}
We use the same notation, $\cdot$, for the inner product in $\mathbb{R}^n$, now including the vectors in $\mathbb{R}^n$ just introduced above. By abuse of notations, we use the notation, $\cdot$, for the inner product in $\mathbb{R}^{n-1}$, also including the above vectors in $\mathbb{R}^{n-1}$ just introduced.
We write the curvature $\kappa$ as
$$
\kappa=
\begin{bmatrix}
\kappa_{1} & &0 \\
& \ddots & \\
0& & \kappa_{n-1}
\end{bmatrix},
$$
and we let
$$
\widetilde{\kappa}=
\begin{bmatrix}
\kappa_1 &  &  &0 \\ 
& \ddots &  & \\ 
&  &  \kappa_{n-1} & \\ 
0&  &   & \kappa_n 
\end{bmatrix},
$$
with $\kappa_n=0$ for convenience for later.

With the above notations, Lemma \ref{lem:boundary} states that
$$
\nabla\overline{\zeta}=(I_n-y_n\widetilde{\kappa})D\zeta,
$$
and
$$
\frac{\partial}{\partial y_n}(\nabla\overline{\zeta})=(I_n-y_n\widetilde{\kappa})\frac{\partial}{\partial y_n}(D\zeta)-(I_n-y_n\widetilde{\kappa})^{-1}\widetilde{\kappa}\nabla\overline{\zeta}
$$
on the line $\{(0,\cdots,0,y_n)\in U:0\leq y_n<\sigma\}$, in the setting of Lemma \ref{lem:boundary}.

We start the estimate of $\frac{\partial w}{\partial \Vec{\mathbf{n}}}(x_0,t_0)$. In order to estimate $\frac{\partial w}{\partial \Vec{\mathbf{n}}}(x_0,t_0)(=-\frac{\partial w}{\partial x_n}(x_0,t_0)=-\frac{\partial\overline{w}}{\partial y_n}(y_0,t_0))$, we first compute $\frac{\partial \overline{v}}{\partial y_n}$, $\nabla'\overline{v}$, $\nabla'\overline{u}\cdot\frac{\partial}{\partial y_n}(\nabla'\overline{u})$ in turn. Note that for the normal derivatives, we have the additional negative sign, since $\Vec{\mathbf{n}}(x_0)$ denotes the outward unit normal vector at $x_0$, while the inward unit normal vector at $x_0$ and the inward unit normal vector at $y_0$ lie on the positive $x_n-$axis and the positive $y_n-$axis, respectively.

To compute $\frac{\partial \overline{v}}{\partial y_n}$, we differentiate $\overline{v}^2=\eta^2+|Du|^2$ on the line $\{(0,\cdots,0,y_n):0\leq y_n<\sigma\}$ in $y_n$ to obtain% and by (iv) of Lemma \ref{lem:boundary} and the boundary condition $\frac{\partial\overline{u}}{\partial y_n}=-\overline{\phi}\overline{v}^{1-q}$ at $(x_0,t_0)$, we get
\begin{align*}
2\overline{v}\frac{\partial\overline{v}}{\partial y_n}&=2\frac{\partial}{\partial y_n}\left((I_n-y_n\widetilde{\kappa})^{-1}\nabla\overline{u}\right)\cdot(I_n-y_n\widetilde{\kappa})^{-1}\nabla\overline{u}\\
&=2\left((I_n-y_n\widetilde{\kappa})^{-3}\widetilde{\kappa}\nabla\overline{u}\right)\cdot\nabla\overline{u}+2\left((I_n-y_n\widetilde{\kappa})^{-2}\frac{\partial}{\partial y_n}(\nabla\overline{u})\right)\cdot\nabla\overline{u}
\end{align*}
on the line $\{(0,\cdots,0,y_n):0\leq y_n<\sigma\}$. Since $\frac{\partial\overline{u}}{\partial y_n}=-\overline{\phi}\overline{v}^{1-q}$ at $(y_0,t_0)$ and $\kappa_n=0$, we obtain
\begin{align}\label{v_n}
\frac{\partial\overline{v}}{\partial y_n}=\frac{1}{\overline{v}}\frac{\partial}{\partial y_n}(\nabla'\overline{u})\cdot\nabla'\overline{u}-\overline{\phi}\overline{v}^{-q}\frac{\partial^2\overline{u}}{\partial y_n^2}+\frac{1}{\overline{v}}(\kappa\nabla'\overline{u})\cdot\nabla'\overline{u}.
\end{align}
at $(y_0,t_0)$.

We compute $\nabla'\overline{v}$ at $(y_0,t_0)$. Since $y_0$ is a maximizer of $\overline{w}(\cdot,t_0)$ on $U\cap g^{-1}(\partial\Omega)=\{y=(y',0)\in U: y'=(y_1,\cdots,y_{n-1})\}$, it holds that $\nabla'\overline{w}(y_0,t_0)=0$. Note also that $\overline{w}=\overline{v}^{q+1}-(q+1)\overline{\phi}^2\overline{v}^{1-q}$ on $\left(U\cap g^{-1}(\partial\Omega)\right)\times\{t_0\}$. Hence, at $(y_0,t_0)$,
\begin{align*}
0=\frac{1}{q+1}\nabla'\overline{w}=\overline{v}^q\nabla'\overline{v}-2\overline{\phi}\overline{v}^{1-q}\nabla'\overline{\phi}-(1-q)\overline{\phi}^2\overline{v}^{-q}\nabla'\overline{v},
\end{align*}
which gives
\begin{align}\label{v_l}
\nabla'\overline{v}=\frac{2\overline{\phi}\overline{v}^{1-q}}{\overline{v}^q-(1-q)\overline{\phi}^2\overline{v}^{-q}}\nabla'\overline{\phi}
\end{align}
at $(y_0,t_0)$. Here, we are assuming $(\overline{v}(y_0,t_0)=)v(x_0,t_0)>\left(|1-q|\|\phi\|^2_{C^0(\partial\Omega)}\right)^{1/2q}$ so that $\overline{v}^q-(1-q)\overline{\phi}^2\overline{v}^{-q}>0$. In the other case when $v=v(x_0,t_0)\leq\left(|1-q|\|\phi\|^2_{C^0(\partial\Omega)}\right)^{1/2q}$, we already achieve our goal.

We compute $\nabla'\overline{u}\cdot\frac{\partial}{\partial y_n}(\nabla'\overline{u})$ before getting into the estimate of $\frac{\partial w}{\partial \Vec{\mathbf{n}}}$ at $(x_0,t_0)$. We differentiate $\frac{\partial\overline{u}}{\partial y_n}=-\overline{\phi}\overline{v}^{1-q}$ on $\left(U\cap g^{-1}(\partial\Omega)\right)\times\{t_0\}$ in $y_{\ell}$, $\ell=1,\cdots,n-1$, to have
\begin{align*}
\frac{\partial}{\partial y_n}\left(\nabla'\overline{u}\right)=\nabla'\left(\frac{\partial\overline{u}}{\partial y_n}\right)=-\overline{v}^{1-q}\nabla'\overline{\phi}-(1-q)\overline{\phi} \overline{v}^{-q}\nabla'\overline{v}.
\end{align*}
By \eqref{v_l}, we obtain
\begin{align}
\nabla'\overline{u}\cdot\frac{\partial}{\partial y_n}\left(\nabla'\overline{u}\right)&=-\overline{v}^{1-q}\nabla'\overline{u}\cdot\nabla'\overline{\phi}-\frac{2(1-q)\overline{\phi}^2\overline{v}^{1-2q}}{\overline{v}^q-(1-q)\overline{\phi}^2\overline{v}^{-q}}\nabla'\overline{u}\cdot\nabla'\overline{\phi}\nonumber\\
&=-\frac{\overline{v}+(1-q)\overline{\phi}^2\overline{v}^{1-2q}}{\overline{v}^q-(1-q)\overline{\phi}^2\overline{v}^{-q}}\nabla'\overline{u}\cdot\nabla'\overline{\phi}.\label{u_lu_nl}
\end{align}

We now estimate $\frac{\partial w}{\partial \Vec{\mathbf{n}}}$ at $(x_0,t_0)$. On the line $\{(0,\cdots,0,y_n):0\leq y_n<\sigma\}$, we have 
\begin{align*}
\frac{1}{q+1}\frac{\partial\overline{w}}{\partial y_n}&=\frac{1}{q+1}\frac{\partial}{\partial y_n}\left(\overline{v}^{q+1}-(q+1)\overline{\phi}(I_n-y_n\widetilde{\kappa})^{-2}\nabla\overline{u}\cdot\nabla\overline{h}\right)\\
&=\overline{v}^q\frac{\partial\overline{v}}{\partial y_n}-\frac{\partial\overline{\phi}}{\partial y_n}(I_n-y_n\widetilde{\kappa})^{-2}\nabla\overline{u}\cdot\nabla\overline{h}-2\overline{\phi}(I_n-y_n\widetilde{\kappa})^{-3}\widetilde{\kappa}\nabla\overline{u}\cdot\nabla\overline{h}\\
&\quad\quad-\overline{\phi}(I_n-y_n\widetilde{\kappa})^{-2}\frac{\partial}{\partial y_n}(\nabla\overline{u})\cdot\nabla\overline{h}-\overline{\phi}(I_n-y_n\widetilde{\kappa})^{-2}\nabla\overline{u}\cdot\frac{\partial}{\partial y_n}(\nabla\overline{h}).
\end{align*}
Note that $\kappa_n=0$ and that $\nabla'\overline{h}=0$, $\frac{\partial\overline{h}}{\partial y_n}=-1$ at $(y_0,t_0)$. Also, $\nabla'\left(\frac{\partial\overline{h}}{\partial y_n}\right)=0$ on $U\cap g^{-1}(\partial\Omega)$ since $\frac{\partial\overline{h}}{\partial y_n}=-1$ on $U\cap g^{-1}(\partial\Omega)$. Therefore, at $(y_0,t_0)$, we get
\begin{align*}
\frac{1}{q+1}\frac{\partial\overline{w}}{\partial y_n}
=\overline{v}^q\frac{\partial\overline{v}}{\partial y_n}+\frac{\partial\overline{\phi}}{\partial y_n}\frac{\partial\overline{u}}{\partial y_{n}}+\overline{\phi}\frac{\partial^2\overline{u}^2}{\partial y_n^2}-\overline{\phi}\frac{\partial\overline{u}}{\partial y_{n}}\frac{\partial^2\overline{h}}{\partial y_{n}^2}.
\end{align*}
By \eqref{v_n}, \eqref{u_lu_nl} and the boundary condition that $\frac{\partial\overline{u}}{\partial y_n}=-\overline{\phi}\overline{v}^{1-q}$ on $\left(U\cap g^{-1}(\partial\Omega)\right)\times\{t_0\}$, we obtain, at $(y_0,t_0)$,
\begin{align}
\frac{1}{q+1}\frac{\partial\overline{w}}{\partial y_n}
&=\overline{v}^{q-1}\left(\nabla'\overline{u}\cdot\frac{\partial}{\partial y_n}(\nabla'\overline{u})+(\kappa\nabla'\overline{u})\cdot\nabla'\overline{u}\right)-\overline{\phi}\frac{\partial^2\overline{u}}{\partial y_n^2}\nonumber\\
&\qquad\qquad\qquad\qquad\qquad\qquad+\frac{\partial\overline{\phi}}{\partial y_n}\frac{\partial\overline{u}}{\partial y_{n}}+\overline{\phi}\frac{\partial^2\overline{u}^2}{\partial y_n^2}-\overline{\phi}\frac{\partial\overline{u}}{\partial y_{n}}\frac{\partial^2\overline{h}}{\partial y_{n}^2}\nonumber\\
&=-\frac{\overline{v}^q+(1-q)\overline{\phi}^2\overline{v}^{-q}}{\overline{v}^q-(1-q)\overline{\phi}^2\overline{v}^{-q}}\nabla'\overline{u}\cdot\nabla'\overline{\phi}+\overline{v}^{q-1}(\kappa\nabla'\overline{u})\cdot\nabla'\overline{u}\nonumber\\
&\qquad\qquad\qquad\qquad\qquad\qquad\qquad\qquad-\overline{\phi}\frac{\partial\overline{\phi}}{\partial y_n}\overline{v}^{1-q}-\overline{\phi}\frac{\partial\overline{u}}{\partial y_{n}}\frac{\partial^2\overline{h}}{\partial y_{n}^2}\label{wy_n}.
\end{align}
At this point, we emphasize the cancellation of the terms $\pm\overline{\phi}\frac{\partial^2\overline{u}}{\partial y_n^2}$ while we compute the normal derivative $\frac{\partial \overline{w}}{\partial y_n}$ at $(y_0,t_0)$. The term $\frac{\partial^2\overline{u}}{\partial y_n^2}$ is the hardest term to get information among the terms in the Hessian $D^2\overline{u}$ of $\overline{u}$.

%Now we continue the estimate. At $(y_0,t_0)$,
%\begin{align*}
%-\frac{1}{q+1}\frac{\partial\overline{w}}{\partial y_n}
%&=\frac{\overline{v}^q+(1-q)\overline{\phi}^2\overline{v}^{-q}}{\overline{v}^q-(1-q)\overline{\phi}^2\overline{v}^{-q}}\sum_{\ell=1}^{n-1}\frac{\partial\overline{u}}{\partial y_{\ell}}\cdot\frac{\partial\overline{\phi}}{\partial y_{\ell}}+\overline{v}^{q-1}\sum_{\ell=1}^{n-1}\left(-\kappa_{\ell}\right)\left(\frac{\partial\overline{u}}{\partial y_{\ell}}\right)^2\\
%&\qquad\qquad\qquad\qquad\qquad\qquad+\overline{\phi}\frac{\partial\overline{\phi}}{\partial y_n}\overline{v}^{1-q}+\overline{\phi}\frac{\partial\overline{u}}{\partial y_{n}}\cdot\frac{\partial^2\overline{h}}{\partial y_{n}^2}.
%\end{align*}

We recall the definitions of $C_0(x_0),\ C_0$;
\begin{align*}
C_0(x_0)&=\max\{-\lambda:\lambda\ \textrm{is an eigenvalue of}\ \kappa\textrm{ at }x_0\},\\
C_0&=\sup\{C_0(x_0):x_0\in\partial\Omega\}.
\end{align*}
Also, if $v>\left(2|1-q|\|\phi\|^2_{C^0(\partial\Omega)}\right)^{1/2q}=:R_0$, then $|(1-q)\phi^2v^{-2q}|<\frac{1}{2}$ at $(x_0,t_0)$, and thus,
$$
\frac{1}{3}<\frac{v^q+(1-q)\phi^2v^{-q}}{v^q-(1-q)\phi^2v^{-q}}(x_0,t_0)<3.
$$
Note that $R_0$ is independent of $T\in(0,\infty),\ \eta\in(0,1],\ x_0\in\partial\Omega$. Lastly, we check that $\frac{\partial^2\overline{h}}{\partial y_n^2}(y_0)=\frac{\partial^2h}{\partial x_n^2}(x_0)$ since the coordinate change $g:U\to V$ is the identity on the line $\{(0,\cdots,0,y_n):|y_n|<\sigma\}$. %Note that $\eta\in(0,1]$ when $q=1$, $\eta=1$ when $q>0$, and thus, $\eta\in(0,1]$ in all cases.

Finally, if $v=v(x_0,t_0)>R_0$, and also if $\eta\in(0,1]$, then
\begin{align*}
&\ \ \frac{1}{q+1}\frac{\partial w}{\partial \Vec{\mathbf{n}}}(x_0,t_0)\\
%&\leq\left|\frac{\overline{v}^q+(1-q)\overline{\phi}^2\overline{v}^{-q}}{\overline{v}^q-(1-q)\overline{\phi}^2\overline{v}^{-q}}(y_0,t_0)\right|\sum_{\ell=1}^{n-1}\left|\frac{\partial\overline{u}}{\partial y_{\ell}}(y_0,t_0)\right|\left|\frac{\partial\overline{\phi}}{\partial y_{\ell}}(y_0,t_0)\right|+C_0\overline{v}(y_0,t_0)^{q-1}\sum_{\ell=1}^{n-1}\left(\frac{\partial\overline{u}}{\partial y_{\ell}}(y_0,t_0)\right)^2\\
%&\qquad\qquad\qquad\qquad\qquad\qquad\qquad+|\overline{\phi}(y_0)|\left|\frac{\partial\overline{\phi}}{\partial y_n}(y_0)\right|\overline{v}(y_0,t_0)^{1-q}+|\overline{\phi}(y_0)|\left|\frac{\partial\overline{u}}{\partial y_n}(y_0,t_0)\right|\left|\frac{\partial^2\overline{h}}{\partial y_n^2}(y_0)\right|\\
%&\leq\left|\frac{v^q+(1-q)\phi^2v^{-q}}{v^q-(1-q)\phi^2v^{-q}}(x_0,t_0)\right|\sum_{\ell=1}^{n-1}\left|\frac{\partial u}{\partial x_{\ell}}(x_0,t_0)\right|\left|\frac{\partial\phi}{\partial x_{\ell}}(x_0,t_0)\right|+C_0v(x_0,t_0)^{q-1}\sum_{\ell=1}^{n-1}\left(\frac{\partial u}{\partial x_{\ell}}(x_0,t_0)\right)^2\\
%&\qquad\qquad\qquad\qquad\qquad\quad+|\phi(x_0)|\left|\frac{\partial\phi}{\partial x_n}(x_0)\right|v(x_0,t_0)^{1-q}+|\phi(x_0)|\left|\frac{\partial u}{\partial x_n}(x_0,t_0)\right|\left|\frac{\partial^2 h}{\partial x_n^2}(x_0)\right|\\
&\leq3\left|Du(x_0,t_0)\right||D\phi(x_0)|+C_0v(x_0,t_0)^{q-1}|D'u(x_0,t_0)|^2\\
&\qquad\qquad\qquad\qquad\qquad\qquad+|\phi(x_0)||D\phi(x_0)|v(x_0,t_0)^{1-q}+|\phi(x_0)|Du(x_0,t_0)||D^2h(x_0)||,
\end{align*}
from the fact that $\frac{1}{q+1}\frac{\partial w}{\partial \Vec{\mathbf{n}}}(x_0,t_0)=-\frac{1}{q+1}\frac{\partial\overline{w}}{\partial y_n}(y_0,t_0)$ and \eqref{wy_n}.
By the boundary condition $\frac{\partial u}{\partial x_n}=-\phi v^{1-q}$ at $(x_0,t_0)$, we see that
$$
|D'u(x_0,t_0)|^2=v(x_0,t_0)^2-\left(\frac{\partial u}{\partial x_n}(x_0,t_0)\right)^2-\eta^2=v(x_0,t_0)^2-\phi(x_0)^2v(x_0,t_0)^{2-2q}-\eta^2.
$$
%Let $\|\phi\|_{C^0(\partial\Omega)}=\sup_{z\in\partial\Omega}|\phi(z)|$, $\|D\phi\|_{C^0(\partial\Omega)}=\sup_{z\in\partial\Omega}|D\phi(z)|$.
Together with the fact that $\eta\in(0,1]$ and that
$$
C_0v(x_0,t_0)^{q-1}(-\phi(x_0)^2v(x_0,t_0)^{2-2q}-\eta^2)\leq |C_0|\|\phi\|^2_{C^0(\partial\Omega)}v(x_0,t_0)^{1-q}+|C_0|v(x_0,t_0)^{q-1},
$$
we obtain that
\begin{align*}
&\quad\ \frac{1}{q+1}\frac{\partial w}{\partial \Vec{\mathbf{n}}}(x_0,t_0)\\
%&\leq3\left|Du(x_0,t_0)\right||D\phi(x_0)|+C_0v(x_0,t_0)^{q-1}\left(v(x_0,t_0)^2-|\phi(x_0)|^2v(x_0,t_0)^{2-2q}-\eta^2\right)\\
%&\qquad\qquad\qquad\qquad\qquad\qquad+|\phi(x_0)||D\phi(x_0)|v(x_0,t_0)^{1-q}+|\phi(x_0)|Du(x_0,t_0)||D^2h(x_0)||\\
&\leq3\|D\phi\|_{C^0(\partial\Omega)}v(x_0,t_0)+C_0v(x_0,t_0)^{q+1}+|C_0|\|\phi\|^2_{C^0(\partial\Omega)}v(x_0,t_0)^{1-q}+|C_0|v(x_0,t_0)^{q-1}\\
&\qquad\qquad\qquad\qquad+\|\phi\|_{C^0(\partial\Omega)}\left\|D\phi\right\|_{C^0(\partial\Omega)}v(x_0,t_0)^{1-q}+\|\phi\|_{C^0(\partial\Omega)}\|h\|_{C^2(\partial\Omega)}v(x_0,t_0)
\end{align*}
Therefore, if $v=v(x_0,t_0)>R_0$, and also if $\eta\in(0,1]$, then
\begin{align*}
&\quad\ \frac{1}{q+1}\frac{\partial w}{\partial \Vec{\mathbf{n}}}\leq L_1v^{q+1}
\end{align*}
at $(x_0,t_0)$, where
\begin{align*}
L_1&:=C_0+3\|D\phi\|_{C^0(\partial\Omega)}v^{-q}+|C_0|\|\phi\|^2_{C^0(\partial\Omega)}v^{-2q}+|C_0|v^{-2}\\
&\qquad\qquad\qquad\qquad\qquad+\|\phi\|_{C^0(\partial\Omega)}\|D\phi\|_{C^0(\partial\Omega)}v^{-2q}+\|\phi\|_{C^0(\partial\Omega)}\|h\|_{C^2(\partial\Omega)}v^{-q},
\end{align*}
with $v=v(x_0,t_0)$.

Note that for a given $\varepsilon_0'\in(0,1)$, it holds that $1-\varepsilon_0'<1-(q+1)\phi^2v^{-2q}<1+\varepsilon_0'$ when $v>\max\left\{1,R_0,\left((q+1)\|\phi\|^2_{C^0(\partial\Omega)}(\varepsilon_0')^{-1}\right)^{1/2q}\right\}$. Thus, for a given $\varepsilon_0'\in(0,1)$, there exists $R_{\varepsilon_0'}>1$ that may depend on $\varepsilon_0'$ but not on $T\in(0,\infty),\ \eta\in(0,1],\ x_0\in\partial\Omega$ such that $C_0\leq L_1<C_0+\varepsilon_0'$, and
$1-\varepsilon_0'<1-(q+1)\phi^2v^{-2q}<1+\varepsilon_0'$ and that $\frac{1}{q+1}\frac{\partial w}{\partial \Vec{\mathbf{n}}}\leq L_1v^{q+1}$ whenever $v>R_{\varepsilon_0'}$. Also, $w=v^{q+1}-(q+1)\phi^2v^{1-q}>(1-\varepsilon_0')v^{q+1}>0$ on $\partial\Omega\times\{t_0\}$ whenever $v>R_{\varepsilon_0'}$.

For a given $\varepsilon_0'\in(0,1)$ and for $v=v(x_0,t_0)>R_{\varepsilon_0'},\ \eta\in(0,1]$, we have
\begin{align*}
\frac{1}{q+1}\frac{\partial w}{\partial \Vec{\mathbf{n}}}&\leq L_1v^{q+1}\\
&=L_1\frac{v^{q+1}}{v^{q+1}-(q+1)\phi^2v^{1-q}}w\\
&=\frac{L_1}{1-(q+1)\phi^2v^{-2q}}w.
\end{align*}
at $(x_0,t_0)$. If $C_{0}+\varepsilon_0'\geq0$,
$$
\frac{L_1}{1-(q+1)\phi^2v^{-2q}}<\frac{C_0+\varepsilon_0'}{1-\varepsilon_0'},
$$
and if $C_{0}+\varepsilon_0'<0$,
$$
\frac{L_1}{1-(q+1)\phi^2v^{-2q}}<\frac{C_0+\varepsilon_0'}{1+\varepsilon_0'}.
$$
For a given $\varepsilon_0\in(0,1)$, there exists $\varepsilon_0'\in(0,1)$ that depends only on $\varepsilon_0$ such that
$$
\frac{C_0+\varepsilon_0'}{1-\varepsilon_0'}<C_0+\varepsilon_0\quad\textrm{ and }\quad\frac{C_0+\varepsilon_0'}{1+\varepsilon_0'}<C_0+\varepsilon_0.
$$
Therefore, for a given $\varepsilon_0\in(0,1)$, there exists a constant $R_{\varepsilon_0}>1$ that may depend on $\varepsilon_0$ but not on $T\in(0,\infty),\ \eta\in(0,1]$ and also not on $x_0\in\partial\Omega$ such that at $(x_0,t_0)$, $w>0$ for $v>R_{\varepsilon_0}$, and
\begin{align*}
\frac{1}{q+1}\frac{\partial w}{\partial \Vec{\mathbf{n}}}<\left(C_0+\varepsilon_0\right)w,
\end{align*}
or
\begin{align}\label{w_n}
\frac{\partial w}{\partial \Vec{\mathbf{n}}}<Lw
\end{align}
for $v>R_{\varepsilon_0}$, where $L=(q+1)(C_0+\varepsilon_0)$. Note that we relied on the fact that $x_0$ is a maximizer of $w$ on $\partial\Omega\times\{t_0\}$, and this condition will be emphasized in future applications in the estimate on the boundary.
%where $\varepsilon_0>0$ is a number that can be arbitrarily small as $v$ goes to $\infty$. This means that for any $\varepsilon'\in(0,1)$, there exists a constant $R>0$ independent of $T\in(0,\infty)$ and $\eta\in(0,1]$ such that the above inequality holds true for $\varepsilon_0\in(0,\varepsilon')$ and $v>R$. We also remark that a constant $R>0$ can be taken independent of a point on $\partial\Omega$. This precisely means that a constant $R>0$ depends only on $\Omega,h,\phi,q$, not on a particular point ($x_0$ we fixed for instance) on $\partial\Omega$.
%This is because $R>0$ is used only in $C_0(x_0)v^{q+1}+O(v)<\left(C_0(x_0)+\varepsilon_0\right)w$ for $v>R$, and the coefficients appearing in Step 5, including $C_0(x_0)$, are continuous in $x_0\in\partial\Omega$, if we regard $x_0$ as a variable moving on the compact set $\partial\Omega$. Having $R>0$ uniform in $x_0\in\partial\Omega$ will be mentioned and used in the proof of Corollary \ref{cor:strcvxlevelset}.

%For the numbers $\varepsilon_1, \varepsilon_2, \varepsilon_3, \varepsilon_4, \varepsilon_5$ which we will use later, we will again view them as arbitrarily small numbers as $v$ goes to $\infty$ (or, as $v$ is large), and the precise meaning of having arbitrarily small numbers $\varepsilon_1,\cdots,\varepsilon_5$ will be explained once more right before we finish Case 2.

%Therefore, we finally obtain
%\begin{align}\label{w_n}
%\frac{\partial w}{\partial \Vec{\mathbf{n}}}<Lw,
%\end{align}
%where $L=(q+1)(C_0(x_0)+\varepsilon_0)$ with $\varepsilon_0>0$ arbitrarily small as $v$ goes to $\infty$. 

We claim that if $C_0<0$, then $v(x_0,t_0)\leq R$ for some constant $R>1$ that does not depend on $T\in(0,\infty),\ \eta\in(0,1]$ and also not on $x_0\in\partial\Omega$. This is because if we choose $\varepsilon_0=\frac{1}{2}\min\{\frac{1}{2},-\frac{1}{2}C_0\}$, then there is a constant $R=R_{\varepsilon_0}$, which is now fixed by the choice of $\varepsilon_0$, such that $w>0$ and
\begin{align*}
\frac{1}{q+1}\frac{\partial w}{\partial \Vec{\mathbf{n}}}<\left(C_0+\varepsilon_0\right)w
\end{align*}
if $v(x_0,t_0)=v>R=R_{\varepsilon_0}$. If it really were that $v(x_0,t_0)>R=R_{\varepsilon_0}$, then we would have
\begin{align*}
\frac{1}{q+1}\frac{\partial w}{\partial \Vec{\mathbf{n}}}<\left(C_0+\varepsilon_0\right)w<0.
\end{align*}
However, this is a contradiction, since $x_0$ is a maximizer of $w$ on $\overline{\Omega}\times\{t_0\}$, it must hold that $\frac{\partial w}{\partial \Vec{\mathbf{n}}}\geq0$ at $(x_0,t_0)$. Therefore, $v(x_0,t_0)\leq R$ for some constant $R>0$ that does not depend on $T\in(0,\infty),\ \eta\in(0,1]$ (also not on $x_0\in\partial\Omega$). Since our goal is to prove the bound $v(x_0,t_0)\leq R$, we are done in the case when $C_0<0$, and this argument verifies Theorem \ref{thm:global-grad} in the case when $C_0<0$ under the assumption \eqref{condition:c} with $C_0<0$.

It remains the case when $C_0\geq0$. From now on, we assume that $C_0\geq0$, and thus that $L\geq0$.

\medskip

\emph{Step 6.} For a new function $\psi:=\rho w$, we get a new maximizer $(x_1,t_1)$ of $\psi$ with $x_1\in\Omega,t_1>0$ by choosing a specific multiplier $\rho$. We apply the maximum principle to $\psi$ at $(x_1,t_1)$ in order to bound $v(x_1,t_1)$.%, and estimate the additional terms from the multiplier $\rho$ in this step.

\medskip

Let $\psi:=\rho w$ with a multiplier $\rho=\rho(x)$ that is smooth on $\mathbb{R}^n$. We require that $\rho(x_0)=1$, $\frac{\partial \rho}{\partial \Vec{\mathbf{n}}}(x_0)=-L$. Let $B=B(x_c,K_0)$ be the open ball with the center $x_c:=x_0-K_0\Vec{\mathbf{n}}(x_0)$ so that $B\subseteq\Omega$ and $\overline{B}\cap(\mathbb{R}^n\setminus\Omega)=\{x_0\}$. Choose
$$
\rho(x):=-\frac{L}{2K_0}|x-x_c|^2+\frac{LK_0}{2}+1.
$$
Since we assume $L\geq0$, it holds that $\rho\geq1$ in $B$. Also, $\rho$ is a quadratic function in $|x-x_c|$, and $\rho(x_0)=1,\ \frac{\partial \rho}{\partial \Vec{\mathbf{n}}}(x_0)=-L$. Then, by \eqref{w_n},
$$
\frac{\partial \psi}{\partial \Vec{\mathbf{n}}}=\rho\frac{\partial w}{\partial \Vec{\mathbf{n}}}+w\frac{\partial \rho}{\partial \Vec{\mathbf{n}}}=\frac{\partial w}{\partial \Vec{\mathbf{n}}}+(-L)w<0,\qquad\text{at }(x_0,t_0),
$$
if $v(x_0,t_0)>R_{\varepsilon_0}$ for a given $\varepsilon_0\in(0,1)$. %If there exists an absolute number $\varepsilon_0\in(0,1)$ such that $v(x_0,t_0)\leq R_{\varepsilon_0}$, then we achieve our goal, since $R_{\varepsilon_0}>1$ does not depend on $T\in(0,\infty),\ \eta\in(0,1]$, and $\varepsilon_0\in(0,1)$ is also fixed.

For a given $\varepsilon_0\in(0,1)$, assume $v(x_0,t_0)>R_{\varepsilon_0}$. Say the maximum of $\psi=\rho w$ on $\overline{B}\times[0,T]$ occurs at $(x_1,t_1)\in\overline{B}\times[0,T]$. If $t_1=0$, then
$$
w(x_0,t_0)=\rho(x_0)w(x_0,t_0)\leq\rho(x_1)w(x_1,t_1)=\rho(x_1)w(x_1,0)\leq R,
$$
for some constant $R>1$ independent of $T\in(0,\infty),\ \eta\in(0,1]$. Thus, it proves that $w(x_0,t_0)\leq R$ in this case. Using the fact that $v^{q+1}-Cv\leq w$ for some constant $C>0$ depending only on $\|\phi\|_{C^0(\overline{\Omega})},\ \|h\|_{C^1(\overline{\Omega})}$, we see that $v(x_0,t_0)\leq R$, and we reach our goal. Therefore, we now consider the case when $t_1>0$, and we assume $t_1>0$ from now on. If $x_1\in\partial B$, then $\rho(x_1)=\rho(x_0)$, and thus,
$$
\rho(x_1)w(x_1,t_1)\leq\rho(x_0)w(x_0,t_0).
$$
However, $\rho(x_0)w(x_0,t_0)<\rho(x)w(x,t_0)$ for some $x\in B$ since $\frac{\partial (\rho w)}{\partial \Vec{\mathbf{n}}}(x_0,t_0)<0$. It contradicts with the choice of $(x_1,t_1)\in\argmax_{\overline{B}\times[0,T]}\psi$. Therefore, $x_1\in B$, and it suffices to consider the case $(x_1,t_1)\in B\times(0,T]$. %If we can prove $w(x_1,t_1)\leq R$, then we can conclude $w(x_0,t_0)\leq R$, as
%$$
%w(x_0,t_0)\leq \rho(x_0)w(x_0,t_0)\leq \rho(x_1)w(x_1,t_1)\leq R.
%$$
%Therefore, it suffices to bound $v(x_1,t_1)$, or equivalently $w(x_1,t_1)$, and accordingly, we change our goal from verifying $v(x_0,t_0)\leq R$ to $v(x_1,t_1)\leq R$ for $(x_1,t_1)\in\argmax_{\overline{B}\times[0,T]}\psi$ when $(x_1,t_1)\in B\times (0,T]$.

For a given $\varepsilon_0\in(0,1)$, we always assume from now on that $v(x_0,t_0)>R_{\varepsilon_0}$ so that $w>0 and $\eqref{w_n} are valid. Also, we assume that a maximizer $(x_1,t_1)\in\argmax_{\overline{B}\times[0,T]}\psi$ happens in $B\times(0,T]$, since we achieve the goal, i.e., to prove $v(x_0,t_0)\leq R$, in the other cases from the above argument. Fix $(x_1,t_1)\in\argmax_{\overline{B}\times[0,T]}\psi\cap(B\times(0,T])$.

Before we move on the next step, we check that there exists a constant $C>0$ depending only on $\|\phi\|_{C^0(\overline{\Omega})},\ \|h\|_{C^1(\overline{\Omega})}$ such that the condition $v(x_0,t_0)>R_{\varepsilon_0}$ with $R_{\varepsilon_0}>(8C)^{\frac{1}{q+1}}$ implies the condition $v(x_1,t_1)>\left(\frac{1}{4C}\right)^{\frac{1}{q+1}}R_{\varepsilon_0}=:R'_{\varepsilon_0}$. This is because there exists a constant $C>0$ depending only on $\|\phi\|_{C^0(\overline{\Omega})},\ \|h\|_{C^1(\overline{\Omega})}$ such that
$$
v(x_0,t_0)^{q+1}-Cv(x_0,t_0)\leq w(x_0,t_0)\leq\rho(x_1,t_1)w(x_1,t_1)\leq C(v(x_1,t_1)^{q+1}+v(x_1,t_1)).
$$
Moreover, if $v(x_0,t_0)>R_{\varepsilon_0}$ with $R_{\varepsilon_0}>(8C)^{\frac{1}{q+1}}$, then
$$
\frac{1}{2}R_{\varepsilon_0}^{q+1}<\frac{1}{2}v(x_0,t_0)^{q+1}\leq v(x_0,t_0)^{q+1}-Cv(x_0,t_0)\leq C(v(x_1,t_1)^{q+1}+v(x_1,t_1)).
$$
If $v(x_1,t_1)\leq1$, then we would have $\frac{1}{2}R_{\varepsilon_0}^{q+1}<2C$, which contradicts to $R_{\varepsilon_0}>(8C)^{\frac{1}{q+1}}$. Thus, $v(x_1,t_1)>1$, which gives $\frac{1}{2}R_{\varepsilon_0}^{q+1}<2Cv(x_1,t_1)^{q+1}$ and the conclusion that $v(x_1,t_1)>R'_{\varepsilon_0}$. We note that this is true whenever we replace the constant $C>0$ by a larger one.

Writing $R'_{\varepsilon_0}=\left(\frac{1}{4C}\right)^{\frac{1}{q+1}}R_{\varepsilon_0},$ $R_{\varepsilon_0}=\left(4C\right)^{\frac{1}{q+1}}R'_{\varepsilon_0}$ (and also for $R,\ R'$ similarly), we can state the above equivalently that if $v(x_1,t_1)\leq R'_{\varepsilon_0}$, then $v(x_0,t_0)\leq \max\left\{R_{\varepsilon_0},(8C)^{\frac{1}{q+1}}\right\}$. Accordingly, we change our goal from verifying $v(x_0,t_0)\leq R$ to proving $v(x_1,t_1)\leq R'$.

%As we assume $v(x_0,t_0)>R_{\varepsilon_0}$, we accordingly assume $v(x_1,t_1)>R'_{\varepsilon_0}$, a constant that depends on $\varepsilon_0\in(0,1)$ but not on $T\in(0,\infty),\ \eta\in(0,1]$.

\medskip

By the maximum principle, $D^2\psi\leq0,\ \psi_t\geq0$ at $(x_1,t_1)$, and thus,
\begin{equation}\label{maxprinciplerhor}
0\geq \frac{1}{(q+1)\rho}\left(\textrm{tr}\{a(Du)D^2\psi\}-\psi_t\right)
\end{equation}
at $(x_1,t_1)$.
%The derivatives of $\psi$ are
%$$\qquad\qquad
%\begin{cases}
%\hspace{4mm}\psi_t=\rho_t w+\rho w_t,\\
%\hspace{1.5mm}D\psi=wD\rho+\rho Dw,\\
%D^2\psi=\rho D^2w+Dw\otimes D\rho+D\rho\otimes Dw+wD^2\rho.
%\end{cases}
%$$
Substituting the derivatives of $\psi$ with those of $\rho$ and $w$, we obtain, at $(x_1,t_1)$,
\begin{align}
0%&\geq\frac{1}{(q+1)\rho}\left(a:D^2\psi-\psi_t\right)\nonumber\\
%&=\frac{1}{(q+1)\rho}\left(\rho a:D^2w+a:Dw\otimes D\rho+a:D\rho\otimes Dw+wa:D^2\rho-\rho_tw-\rho w_t\right)\nonumber\\
\geq\frac{w}{(q+1)\rho}\textrm{tr}\{a(Du)D^2\rho\}+\frac{2}{(q+1)\rho}\textrm{tr}\{a(Du)Dw\otimes D\rho\}+\frac{1}{q+1}(\textrm{tr}\{a(Du)D^2w\}-w_t).\label{maxprincipleexpandedrhor}
%&=:J_1'+J_2'+J_3'+J_4'+J_5'+J_6',\nonumber
\end{align}
Following the computations up to \eqref{maxprinciplesimplifiedr} in Step 1, we see that there exist a constant $C>0$ independent of $T\in(0,\infty),\ \eta\in(0,1],\ \varepsilon_0\in(0,1)$ and a constant $R'_{\varepsilon_0}>1$ that may depend on $\varepsilon_0\in(0,1)$ but not on $T\in(0,\infty),\ \eta\in(0,1]$ such that, at $(x_1,t_1)$,
\begin{align}
0&\geq\frac{1}{q+1}\left(\textrm{tr}\{a(Du)D^2w\}-w_t\right)\nonumber\\
&\geq J_1+J_2-|Dc|v^{q+1}+(q+1-\varepsilon_0)V-C(v+v^q)\label{maxprinciplesimplifiedrhor}
\end{align}
if $v=v(x_1,t_1)>R'_{\varepsilon_0}$, with the same definitions of $J_1,\ J_2$ ($\varepsilon$ replaced by $\varepsilon_0$).

%Before we put \eqref{maxprinciplesimplifiedrhor} into \eqref{maxprincipleexpandedrhor},
We check for a moment that, at $(x_1,t_1)$,
\begin{align}
V\geq V_1+V_2,\label{Vexpansion}
\end{align}
where
\begin{align*}
V_1&:=v^{-q-1}\textrm{tr}\left\{a(Du)\left(\frac{w}{(q+1)\rho}D\rho\right)\otimes\left(\frac{w}{(q+1)\rho}D\rho\right)\right\},\\
V_2&:=-2v^{-q-1}\textrm{tr}\left\{a(Du)\left(\frac{w}{(q+1)\rho}D\rho\right)\otimes\left((Du\cdot Dh)D\phi+\phi D^2uDh+\phi D^2hDu\right)\right\}.
\end{align*}
At $(x_1,t_1)$, we have that $D\psi=wD\rho+\rho Dw=0$ so that
$$
-\frac{w}{\rho}D\rho=(q+1)(v^qDv-(Du\cdot Dh)D\phi-\phi D^2uDh-\phi D^2hDu).
$$
By putting
$$
v^qDv=-\frac{w}{(q+1)\rho}D\rho+(Du\cdot Dh)D\phi+\phi D^2uDh+\phi D^2hDu
$$
into $V=v^{q-1}\textrm{tr}\{a(Du)Dv\otimes Dv\}=v^{-q-1}\textrm{tr}\{a(Du)(v^qDv)\otimes(v^qDv)\}$, we obtain \eqref{Vexpansion}.

By \eqref{maxprincipleexpandedrhor}, \eqref{maxprinciplesimplifiedrhor}, \eqref{Vexpansion}, there exist a constant $C>0$ independent of $T\in(0,\infty),\ \eta\in(0,1],\ \varepsilon_0\in(0,1)$ and a constant $R'_{\varepsilon_0}>1$ that may depend on $\varepsilon_0\in(0,1)$ but not on $T\in(0,\infty),\ \eta\in(0,1]$ such that, at $(x_1,t_1)$,
\begin{align}
0&\geq\frac{w}{(q+1)\rho}\textrm{tr}\{a(Du)D^2\rho\}+\left(\frac{2}{(q+1)\rho}\textrm{tr}\{a(Du)Dw\otimes D\rho\}+(q+1)V_1\right)\nonumber\\
&\qquad\qquad\qquad\qquad\qquad\qquad\qquad\qquad\qquad+J'_1+J'_2-|Dc|v^{q+1}-C(v+v^q)\label{maxprinciplerhofinalr}
\end{align}
if $v=v(x_1,t_1)>R'_{\varepsilon_0}$, where
\begin{align*}
J'_1:&=J_1-\varepsilon_0V\\
&=(1-\varepsilon_0)v^{q-1}\textrm{tr}\{(a(Du)D^2u)^2\}-\frac{1}{2}v^q\textrm{tr}\{(D_pa(Du)\odot Dv)D^2u\}\\
&\qquad\qquad\qquad\qquad\qquad\qquad\qquad\qquad+cDv\cdot(-v^{q-1}Du+\phi Dh)-\varepsilon_0V,\\
J'_2:&=J_2+(q+1)V_2\\
&=\varepsilon_0 v^{q-1}\textrm{tr}\{(a(Du)D^2u)^2\}-\frac{1}{2}\varepsilon_0 v^q\textrm{tr}\{(D_pa(Du)\odot Dv)D^2u\}\\
&\qquad\qquad\qquad-2\textrm{tr}\{a(Du)(D\phi\otimes (D^2uDh))\}-2\phi\textrm{tr}\{a(Du)D^2uD^2h\}\\
&\qquad\qquad\qquad\qquad\qquad\qquad+\phi\textrm{tr}\{(D_pa(Du)\odot(D^2uDh))D^2u\}+(q+1)V_2.
\end{align*}

\medskip

\emph{Step 7.} We estimate the terms of \eqref{maxprinciplerhofinalr}.

\medskip

We start with the first term of \eqref{maxprinciplerhofinalr}. By the fact that $D^2\rho=-\frac{L}{K_0}I_n$ and $\rho\geq1$ in $B$, we see that
\begin{align*}
&\ \ \ \frac{w}{(q+1)\rho}\textrm{tr}\{a(Du)D^2\rho\}\nonumber\\
&=\frac{w}{(q+1)\rho}\left(-\frac{L}{K_0}\right)\left(\frac{\eta^2}{v^2}+n-1\right)\nonumber\\
&\geq-\frac{L}{(q+1)K_0}(v^{q+1}+(q+1)\|\phi\|_{C^0(\overline{\Omega})}\|h\|_{C^1(\overline{\Omega})}v)\left(\frac{\eta^2}{v^2}+n-1\right).\nonumber
\end{align*}
Therefore, there exists a constant $C>0$ independent of $T\in(0,\infty),\ \eta\in(0,1],\ \varepsilon_0\in(0,1)$ such that, at $(x_1,t_1)$,
\begin{align}
\frac{w}{(q+1)\rho}\textrm{tr}\{a(Du)D^2\rho\}\geq-\frac{(n-1)(C_0+\varepsilon_0)}{K_0}v^{q+1}-C(v+v^q)\label{term1rho}
\end{align}
if $v=v(x_1,t_1)>1$. Here, we have used the fact that $\eta\in(0,1]$.

We bound the second term of \eqref{maxprinciplerhofinalr}. Since $Dw=-\frac{w}{\rho}D\rho$ at $(x_1,t_1)$, we obtain
\begin{align*}
\frac{2}{(q+1)\rho}\textrm{tr}\{a(Du)Dw\otimes D\rho\}+(q+1)V_1=\frac{(wv^{-1-q}-2)w}{q+1}\textrm{tr}\left\{a(Du)\frac{D\rho}{\rho}\otimes\frac{D\rho}{\rho}\right\}
\end{align*}
at $(x_1,t_1)$. From
$$
v^{q+1}-(q+1)\|\phi\|_{C^0(\overline{\Omega})}\|h\|_{C^1(\overline{\Omega})}v\leq w\leq v^{q+1}+(q+1)\|\phi\|_{C^0(\overline{\Omega})}\|h\|_{C^1(\overline{\Omega})}v,
$$
we see that there exists a constant $R'_{\varepsilon_0}>1$ that may depend on $\varepsilon_0\in(0,1)$ but not on $T\in(0,\infty),\ \eta\in(0,1]$ such that $|wv^{-q-1}-1|<\varepsilon_0$ for $v>R'_{\varepsilon_0}$. Using the fact that
$$
0\leq\textrm{tr}\left\{a(Du)\frac{D\rho}{\rho}\otimes\frac{D\rho}{\rho}\right\}\leq \left|\frac{D\rho}{\rho}\right|^2=\frac{L^2}{K_0^2}|x_1-x_c|^2\leq(C_0+\varepsilon_0)^2,
$$
and the fact that
$$
w\leq v^{q+1}+(q+1)\|\phi\|_{C^0(\overline{\Omega})}\|h\|_{C^1(\overline{\Omega})}v
$$
once again, we see that there exist a constant $C>0$ independent of $T\in(0,\infty),\ \eta\in(0,1],\ \varepsilon_0\in(0,1)$ and a constant $R'_{\varepsilon_0}>1$ that may depend on $\varepsilon_0\in(0,1)$ but not on $T\in(0,\infty),\ \eta\in(0,1]$ such that, at $(x_1,t_1)$,
\begin{align}
\frac{2}{(q+1)\rho}\textrm{tr}\{a(Du)Dw\otimes D\rho\}+(q+1)V_1\geq-(q+1)(C_0+\varepsilon_0)^2(1+\varepsilon_0)v^{q+1}-Cv.\label{term2rho}
\end{align}
if $v=v(x_1,t_1)>R'_{\varepsilon_0}$.

We give an estimate of the term $J_1'$ of \eqref{maxprinciplerhofinalr}. Following the same computation of $J_1$, we have \eqref{term1J1} with $\varepsilon_0$ instead of $\varepsilon$, and thus, we see that there exist a constant $C>0$ independent of $T\in(0,\infty),\ \eta\in(0,1],\ \varepsilon_0\in(0,1)$ and a constant $R'_{\varepsilon_0}>1$ that may depend on $\varepsilon_0\in(0,1)$ but not on $T\in(0,\infty),\ \eta\in(0,1]$ such that
\begin{align}
(1-\varepsilon_0)v^{q-1}\textrm{tr}\{(a(Du)D^2u)^2\}-\frac{1}{2}v^q\textrm{tr}\{(D_pa(Du)\odot Dv)D^2u\}\geq\frac{1-\varepsilon_0}{n-1}c^2v^{q+1}+\varepsilon_0V-Cv^q.\label{term1J1'}
\end{align}
if $v>R'_{\varepsilon_0}$.

We claim that at $(x_1,t_1)$, it holds that, for $v>1$,
\begin{align}
|cDv\cdot(-v^{q-1}Du+\phi Dh)|\leq Cv+(C_0+\varepsilon_0)|c|v^{q+1}\label{term3J1'}
\end{align}
for some constant $C>0$ independent of $T\in(0,\infty),\ \eta\in(0,1],\ \varepsilon_0\in(0,1)$. Since $D\psi=0$ at $(x_1,t_1)$,
\begin{align*}
0&=\frac{1}{(q+1)\rho}D\psi\cdot Du\\
&=v^qDu\cdot Dv-(Du\cdot D\phi)(Du\cdot Dh)-\phi (D^2uDu)\cdot Dh-\phi (D^2hDu)\cdot Du\\
&\qquad\qquad\qquad\qquad\qquad\qquad\qquad\qquad\qquad\qquad\qquad\qquad\qquad+\frac{w}{(q+1)\rho}D\rho\cdot Du.
\end{align*}
This implies that at $(x_1,t_1)$,
$$
cDv\cdot(-v^{q-1}Du+\phi Dh)=-\frac{c}{v}\left((Du\cdot D\phi)(Du\cdot Dh)+\phi (D^2hDu)\cdot Du-\frac{w}{(q+1)\rho}D\rho\cdot Du\right),
$$
and thus that at $(x_1,t_1)$,
\begin{align*}
|cDv\cdot(-v^{q-1}Du+\phi Dh)|&\leq Cv+\frac{L|c|}{K_0(q+1)}|x_1-x_c|(v^{q+1}+(q+1)\|\phi\|_{C^0(\overline{\Omega})}\|h\|_{C^1(\overline{\Omega})}v)\\
&\leq Cv+(C_0+\varepsilon_0)|c|v^{q+1}
\end{align*}
for some constant $C>0$ independent of $T\in(0,\infty),\ \eta\in(0,1],\ \varepsilon_0\in(0,1)$.

By \eqref{term1J1'}, \eqref{term3J1'}, we conclude that there exist a constant $C>0$ independent of $T\in(0,\infty),\ \eta\in(0,1],\ \varepsilon_0\in(0,1)$ and a constant $R'_{\varepsilon_0}>1$ that may depend on $\varepsilon_0\in(0,1)$ but not on $T\in(0,\infty),\ \eta\in(0,1]$ such that
\begin{align}
J'_1\geq\left(\frac{1-\varepsilon_0}{n-1}c^2-(C_0+\varepsilon_0)|c|\right)v^{q+1}-C(v+v^q)\label{J1'}
\end{align}
if $v>R'_{\varepsilon_0}$.

Now, we bound the term $J_2'$ of \eqref{maxprinciplerhofinalr}. Taking the axes at $x_1$ so that \eqref{rotation1} holds, and calculating $u_{1i},\ i=2,\cdots,n,\ u_{11}$ using $\rho Dw+wD\rho=0$ at $(x_1,t_1)$, we obtain
\begin{align}
u_{1i}=E_iu_1+F_iu_{ii}+G_iw,\qquad i=2,\cdots,n,\label{secondderivative1rho}
\end{align}
where
\begin{align}
E_i:=\frac{\phi_ih_1+\phi h_{1i}}{v^{q-1}u_1-\phi h_1},\ \ \ F_i:=\frac{\phi h_{i}}{v^{q-1}u_1-\phi h_1},\qquad i=2,\cdots,n\nonumber%\label{E_iF_i'}
\end{align}
and
\begin{align}
G_i:=-\frac{\rho_{i}}{(q+1)\rho(v^{q-1}u_1-\phi h_1)},\qquad i=2,\cdots,n.\nonumber%\label{G_i'}
\end{align}
For $i=1$, we get
\begin{align}
u_{11}&=E_1u_1+\sum_{\ell=2}^nF_{\ell}^2u_{\ell\ell}+G_1w,\label{secondderivative21rho}
\end{align}
where
\begin{align}
E_1:=\frac{\phi_1h_1+\phi h_{11}}{v^{q-1}u_1-\phi h_1}+\frac{\phi}{v^{q-1}u_1-\phi h_1}\sum_{{\ell}=2}^nh_{\ell}E_{\ell},\nonumber%\label{E_1'}
\end{align}
and
\begin{align}
G_1:=-\frac{\rho_1}{(q+1)\rho(v^{q-1}u_1-\phi h_1)}+\frac{\phi}{v^{q-1}u_1-\phi h_1}\sum_{\ell=2}^nh_{\ell}G_{\ell}.\nonumber%\label{G_1'}
\end{align}
The definitions of $E_i$'s and $F_i$'s are the same as before, and we display them to recall. Note that the denominator $v^{q-1}u_1-\phi h_1$ is nonzero for $v>R'$ for some constant $R'>1$ independent of $T\in(0,\infty),\ \eta\in(0,1],\ \varepsilon_0\in(0,1)$.

Write $$
J_2'=\varepsilon_0 v^{q-1}\textrm{tr}\{(a(Du)D^2u)^2\}+S'_1+S'_2-\frac{2wv^{-q-1}}{\rho}\textrm{tr}\{a(Du)D\rho\otimes((Du\cdot Dh)D\phi+\phi D^2hDu)\},
$$
where
\begin{align*}
S'_1:&=S_1-\frac{2wv^{-q-1}\phi}{\rho}\textrm{tr}\{a(Du)D\rho\otimes(D^2uDh)\}\\
&=-2\textrm{tr}\{a(Du)(D\phi\otimes (D^2uDh))\}-2\phi\textrm{tr}\{a(Du)D^2uD^2h\}\\
&\qquad\qquad\qquad\qquad\qquad\qquad\qquad-\frac{2wv^{-q-1}\phi}{\rho}\textrm{tr}\{a(Du)D\rho\otimes(D^2uDh)\},\\
S'_2:&=S_2\\
&=-\frac{1}{2}\varepsilon_0 v^q\textrm{tr}\{(D_pa(Du)\odot Dv)D^2u\}+\phi\textrm{tr}\{(D_pa(Du)\odot(D^2uDh))D^2u\},
\end{align*}
with $S_1,\ S_2$ defined as in Case 1 ($\varepsilon$ replaced by $\varepsilon_0$).

Computing $S'_1$ in a similar manner as before, we get
\begin{align*}
S'_1&=-2\left(\left(\frac{\eta^2}{v^2}H'_{11}E_{1}+\sum_{\ell=2}^n\left(\frac{\eta^2}{v^2}H'_{1\ell}+H'_{\ell1}\right)E_{\ell}\right)u_1\right.\\
&\left.\qquad\qquad\qquad+\sum_{\ell=2}^n\left(\frac{\eta^2}{v^2}H'_{11}F_{\ell}^2+\left(\frac{\eta^2}{v^2}H'_{1\ell}+H'_{\ell1}\right)F_{\ell}+H'_{\ell\ell}\right)u_{\ell\ell}\right)\\
&\qquad\qquad\qquad\qquad\qquad\qquad-2\left(\frac{\eta^2}{v^2}H'_{11}G_1+\sum_{\ell=2}^nG_{\ell}\left(\frac{\eta^2}{v^2}H'_{1\ell}+H'_{\ell1}\right)\right)w,
\end{align*}
where $H'_{\ell i}:=H_{\ell i}+\frac{wv^{-q-1}\phi}{\rho}\rho_{\ell}h_i=\phi_{\ell}h_i+\phi h_{\ell i}+\frac{wv^{-q-1}\phi}{\rho}\rho_{\ell}h_i$ for each $\ell,i=1,\cdots,n$.
Note that since $\eta\in(0,1]$,
$$
\left|\frac{\eta^2}{v^2}H'_{11}E_{1}+\sum_{\ell=2}^n\left(\frac{\eta^2}{v^2}H'_{1\ell}+H'_{\ell1}\right)E_{\ell}\right|\leq Cv^{-q},
$$
$$
\left|\frac{\eta^2}{v^2}H'_{11}F_{\ell}^2+\left(\frac{\eta^2}{v^2}H'_{1\ell}+H'_{\ell1}\right)F_{\ell}+H'_{\ell\ell}\right|\leq C,
$$
$$
\left|\frac{\eta^2}{v^2}H'_{11}G_1+\sum_{\ell=2}^nG_{\ell}\left(\frac{\eta^2}{v^2}H'_{1\ell}+H'_{\ell1}\right)\right|\leq Cv^{-q}
$$
for $v>R'_{\varepsilon_0}$. Here, $R'_{\varepsilon_0}>1$ is some constant that may depend on $\varepsilon_0\in(0,1)$ but not on $T\in(0,\infty),\ \eta\in(0,1]$, and $C>0$ is another constant independent of $T\in(0,\infty),\ \eta\in(0,1],\ \varepsilon_0\in(0,1)$. Using the fact that $|w|\leq v^{q+1}+(q+1)\|\phi\|_{C^0(\overline{\Omega})}\|h\|_{C^1(\overline{\Omega})}v$, we see that there exist a constant $C>0$ independent of $T\in(0,\infty),\ \eta\in(0,1],\ \varepsilon_0\in(0,1)$ and a constant $R'_{\varepsilon_0}>1$ that may depend on $\varepsilon_0\in(0,1)$ but not on $T\in(0,\infty),\ \eta\in(0,1]$ such that
\begin{align}
S'_1\geq-C\left(v+\sum_{\ell=2}^n|u_{\ell\ell}|\right)\label{S1'}
\end{align}
for $v>R'_{\varepsilon_0}$.

Following the same computation of $S_2$, we have
\begin{align*}
S'_2\geq-\varepsilon_0^{-1}v^{-1-q}K_1^2-\varepsilon_0^{-1}v^{1-q}\sum_{\ell=2}^nK_{\ell}^2,
\end{align*}
where $K_{\ell}:=\phi v^{-1}(Du_{\ell}\cdot Dh)$ for each $\ell=1,\cdots,n$. By expansion and \eqref{secondderivative1rho}, \eqref{secondderivative21rho}, we have
\begin{align*}
K_1=K_{11}u_1+\sum_{\ell=2}^nK_{1\ell}u_{\ell\ell}+M_1w,
\end{align*}
where
\begin{align}
K_{11}:=\phi v^{-1}\sum_{\ell=1}^nh_{\ell}E_{\ell},\ \ \ K_{1\ell}:=\phi v^{-1}(h_1F_{\ell}^2+h_{\ell}F_{\ell}),\ \ \ \textrm{for }\ell=2,\cdots,n.\nonumber
\end{align}
and
\begin{align}
M_1:=\phi v^{-1}\sum_{\ell=1}^nh_{\ell}G_{\ell}.\nonumber
\end{align}
For $\ell=2,\cdots,n$,
\begin{align*}
K_{\ell}=K_{\ell1}u_1+K_{\ell\ell}u_{\ell\ell},
\end{align*}
where
\begin{align}
K_{\ell1}:=\phi v^{-1}h_1E_{\ell},\ \ \ K_{\ell\ell}:=\phi v^{-1}(h_1F_{\ell}+h_{\ell}),\ \ \ \textrm{for }\ell=2,\cdots,n,\nonumber
\end{align}
and
\begin{align}
M_{\ell}:=\phi v^{-1}G_{\ell},\ \ \ \textrm{for }\ell=2,\cdots,n.\nonumber
\end{align}
Applying Cauchy-Schwarz inequality as before in $S_2$, we obtain
\begin{align}
S'_2\geq-\varepsilon_0^{-1}S_{21}u_{1}^2-\varepsilon_0^{-1}\sum_{\ell=2}^nS_{2\ell}u_{\ell\ell}^2-\varepsilon_0^{-1}Mw^2,\label{S2'}
\end{align}
where
\begin{align}
S_{21}:=nv^{-1-q}K_{11}^2+2v^{1-q}\sum_{\ell=2}^nK_{\ell1}^2,\ S_{2\ell}:=nv^{-1-q}K_{1\ell}^2+2v^{1-q}K_{\ell\ell}^2,\ \ \textrm{for }\ell=2,\cdots,n,\nonumber
\end{align}
and
\begin{align}
M:=(n+1)M_1^2v^{-1-q}+3v^{1-q}\sum_{\ell=2}^nM_{\ell}^2.\nonumber
\end{align}

We note that there exist a constant $C>0$ independent of $T\in(0,\infty),\ \eta\in(0,1],\ \varepsilon_0\in(0,1)$ and a constant $R'_{\varepsilon_0}>1$ that may depend on $\varepsilon_0\in(0,1)$ but not on $T\in(0,\infty),\ \eta\in(0,1]$ such that, at $(x_1,t_1)$,
\begin{align}
&\ \ \ \left|-\frac{2wv^{-q-1}}{\rho}\textrm{tr}\{a(Du)D\rho\otimes((Du\cdot Dh)D\phi+\phi D^2hDu)\}\right|\nonumber\\
&\leq C\|a\||D\rho|(|D\phi||Dh||Du|+|\phi|\|D^2h\||Du|)\nonumber\\
&\leq Cv.\label{V2second}
\end{align}
Here, we have used the fact that $|wv^{-q-1}-1|<\varepsilon_0$ for $v>R'_{\varepsilon_0}$ (making $R'_{\varepsilon_0}>1$ larger if necessary), that $\rho\geq1$ at $x_1\in B$ and that $\|a\|=\left(\frac{\eta^4}{v^4}+n-1\right)^{1/2}\leq\frac{\eta^2}{v^2}+n-1\leq C$ for $v>1,\ \eta\in(0,1]$. Also, the constants $C>0,\ R'_{\varepsilon_0}>1$ can be taken in a way that they may depend on $\|\rho\|_{C^1(\overline{\Omega})},\ \|\phi\|_{C^1(\overline{\Omega})},\ \|h\|_{C^2(\overline{\Omega})}$, but not on a specific position $x_1\in\overline{\Omega}$.

By \eqref{S1'}, \eqref{S2'}, \eqref{V2second}, we see that there exist a constant $C>0$ independent of $T\in(0,\infty),\ \eta\in(0,1],\ \varepsilon_0\in(0,1)$ and a constant $R'_{\varepsilon_0}>1$ that may depend on $\varepsilon_0\in(0,1)$ but not on $T\in(0,\infty),\ \eta\in(0,1]$ such that
$$
J'_2\geq\varepsilon_0 v^{q-1}\sum_{\ell=2}^nu_{\ell\ell}^2-C(v+\sum_{\ell=2}^n|u_{\ell\ell}|)-\varepsilon_0^{-1}S_{21}u_1^2-\varepsilon_0^{-1}\sum_{\ell=2}^nS_{2\ell}u_{\ell\ell}^2
$$
for $v>R'_{\varepsilon_0}$. As before, there exists a constant $C>0$ that depends only on $\|\phi\|_{C^1(\overline{\Omega})},\ \|h\|_{C^2(\overline{\Omega})}$ such that, for $v>R'_{\varepsilon_0}$
$$
|S_{21}|+\sum_{\ell=2}^n|S_{2\ell}|+|M|\leq Cv^{-1-3q}.
$$
Using the fact that $|w|\leq v^{q+1}+(q+1)\|\phi\|_{C^0(\overline{\Omega})}\|h\|_{C^1(\overline{\Omega})}v$, we see that there exist a constant $C>0$ independent of $T\in(0,\infty),\ \eta\in(0,1],\ \varepsilon_0\in(0,1)$ and a constant $R'_{\varepsilon_0}>1$ that may depend on $\varepsilon_0\in(0,1)$ but not on $T\in(0,\infty),\ \eta\in(0,1]$ such that
\begin{align*}
J'_2\geq-Cv-C\varepsilon_0^{-1}v^{1-q}+\sum_{\ell=2}^n\left(v^{q-1}\left(\varepsilon_0-C\varepsilon_0^{-1}v^{-4q}\right)u_{\ell\ell}^2-C|u_{\ell\ell}|\right)
\end{align*}
for $v>R'_{\varepsilon_0}$.

As before, by choosing $R'_{\varepsilon_0}>1$ that may depend on $\varepsilon_0\in(0,1)$ but not on $T\in(0,\infty),\ \eta\in(0,1]$ such that $C\varepsilon_0^{-1}v^{-4q}<\frac{\varepsilon_0}{2}$ if $v>R'_{\varepsilon_0}$. Then, for $v>R'_{\varepsilon_0}$,
\begin{align*}
J'_2\geq-Cv-C\varepsilon_0^{-1}v^{1-q}+\sum_{\ell=2}^n\left(\frac{\varepsilon_0}{2}v^{q-1}\left(|u_{\ell\ell}|-\frac{C}{\varepsilon_0 v^{q-1}}\right)^2-\frac{C^2}{2\varepsilon_0 v^{q-1}}\right).
\end{align*}
All in all, for each $\varepsilon_0\in(0,1)$, there exist a constant $C>0$ independent of $T\in(0,\infty),\ \eta\in(0,1],\ \varepsilon_0\in(0,1)$ and a constant $R'_{\varepsilon_0}>1$ that may depend on $\varepsilon_0\in(0,1)$ but not on $T\in(0,\infty),\ \eta\in(0,1]$ such that
\begin{align}
J'_2\geq-Cv-C\varepsilon_0^{-1}v^{1-q}.\label{J2'r}
\end{align}
for $v>R'_{\varepsilon_0}$.

\medskip

\emph{Step 8.} We finish Case 2.

\medskip

All in all, by \eqref{maxprinciplerhofinalr}, \eqref{term1rho}, \eqref{term2rho}, \eqref{J1'}, \eqref{J2'r}, we see that there exist a constant $C>0$ independent of $T\in(0,\infty),\ \eta\in(0,1],\ \varepsilon_0\in(0,1)$ and a constant $R'_{\varepsilon_0}>1$ that may depend on $\varepsilon_0\in(0,1)$ but not on $T\in(0,\infty),\ \eta\in(0,1]$ such that, at $(x_1,t_1)$,
\begin{align*}
0&\geq\left(\frac{1-\varepsilon_0}{n-1}c^2-|Dc|-(C_0+\varepsilon_0)|c|-\frac{(n-1)(C_0+\varepsilon_0)}{K_0}\right.\\
&\qquad\qquad\qquad\qquad\qquad\left.-(q+1)(C_0+\varepsilon_0)^2(1+\varepsilon_0)\right)v^{q+1}-C(v+v^q)-C\varepsilon_0^{-1}v^{1-q}.
\end{align*}
if $v>R'_{\varepsilon_0}$. From the condition \eqref{condition:c} and the assumption \eqref{assumtion:c}, we see that there exists $\varepsilon_0\in(0,1)$ such that the coefficient of $v^{q+1}$ satisfies
$$
\frac{1-\varepsilon_0}{n-1}c^2-|Dc|-(C_0+\varepsilon_0)|c|-\frac{(n-1)(C_0+\varepsilon_0)}{K_0}-(q+1)(C_0+\varepsilon_0)^2(1+\varepsilon_0)\geq\frac{\delta}{2}.
$$
Fix such $\varepsilon_0\in(0,1)$. Then, $R'_{\varepsilon_0},\ \varepsilon_0^{-1}$ are fixed as well. Therefore, with this fixed $\varepsilon_0\in(0,1)$, there exist constants $R'>1,\ C>0$ independent of $T\in(0,\infty),\ \eta\in(0,1]$ such that at $(x_1,t_1)$,
$$
0\geq\frac{\delta}{2}v^{q+1}-C(v+v^q)
$$
if $v>R'$. There is, on the other hand, also a constant $R'_0>R'$ independent of $T\in(0,\infty),\ \eta\in(0,1]$ such that
$$
0<\frac{\delta}{2}v^{q+1}-C(v+v^q)
$$
if $v>R'_0$. Therefore, it must hold that $v=v(x_1,t_1)\leq R'_0$, which completes Case 2.
\end{proof}

Next, in order to prove Theorem \ref{thm:local-grad}, we prove \emph{a priori} local gradient estimates, namely the following proposition \ref{prop:exuniquegratime}. %Again, Proposition \ref{prop:exuniquegratime} implies Theorem \ref{thm:local-grad}.

\begin{proposition}\label{prop:exuniquegratime}
Let $T\in(0,\infty),\ \eta\in(0,1]$. Suppose that a solution $u^{\eta}$ of \eqref{eq} exists and it is of class $C^{2,\sigma}(\overline{\Omega}\times[0,T])\cap C^{3,\sigma}(\Omega\times(0,T])$ for some $\sigma\in(0,1)$. Then $u^{\eta}$ satisfies that
$$
\lVert Du^{\eta}\rVert_{L^{\infty}(\overline{\Omega}\times[0,T])}\leq R_T,
$$
where $R_T>1$ is a constant depending only on $T,\Omega, c, f, \phi, q, u_0$.%independent of $T\in(0,\infty)$ and of $\eta\in(0,1]$.
\end{proposition}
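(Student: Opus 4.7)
The plan is to follow the architecture of the proof of Proposition~\ref{prop:exuniquegra} very closely, but to replace the auxiliary function $w := v^{q+1}-(q+1)\phi Du\cdot Dh$ by its time-weighted version $\tilde w(x,t) := e^{-\gamma t}w(x,t)$, where $\gamma>0$ is a constant depending only on $\Omega,c,f,\phi,q,u_0$, to be chosen sufficiently large. The only effect of this weight is that, at any interior maximizer $(x_0,t_0)\in\Omega\times(0,T]$ of $\tilde w$, the condition $\tilde w_t(x_0,t_0)\ge 0$ reads $w_t\ge\gamma w$, which upgrades the maximum-principle inequality used in Proposition~\ref{prop:exuniquegra} from $\mathrm{tr}\{a(Du)D^2w\}-w_t\le 0$ to
\[
\mathrm{tr}\{a(Du)D^2w\}-w_t\le -\gamma w\qquad\text{at }(x_0,t_0).
\]
The new $-\gamma w\sim-\gamma v^{q+1}$ term on the right will play the role of the coercivity assumption \eqref{condition:c}.

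Fix $(x_0,t_0)\in\argmax_{\overline\Omega\times[0,T]}\tilde w$. The case $t_0=0$ is trivial: $\tilde w\le w(\cdot,0)$, which is bounded in terms of $\|u_0\|_{C^1(\overline\Omega)}$, $\phi$ and $h$, and the estimate $v\le(Ce^{\gamma T})^{1/(q+1)}$ follows by reading $w=e^{\gamma t}\tilde w$ backwards. If $x_0\in\Omega$ and $t_0>0$, I would run Steps~1--3 of the proof of Proposition~\ref{prop:exuniquegra} \emph{verbatim}: those steps use only $Dw(x_0,t_0)=0$, the PDE \eqref{eq} and Cauchy--Schwarz, and do not invoke the coercivity assumption. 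With a fixed choice of $\varepsilon\in(0,1)$ (say $\varepsilon=1/2$), the expansion produces a lower bound of the form
\[
\mathrm{tr}\{a(Du)D^2w\}-w_t\ge -Kv^{q+1}-C(v+v^q+v^{1-q}),
\]
where $K=K(\Omega,c,f,\phi,q)\ge 0$ depends only on $\|c\|_{C^1(\overline\Omega\times\mathbb{R})}$, $\|f\|_{C^1(\overline\Omega\times\mathbb{R})}$ and the geometry of $\partial\Omega$, but crucially not on any coercivity parameter. Combining this with the improved upper bound $\le-\gamma w$ and using $w\ge v^{q+1}/2$ for $v$ large, one gets $(\gamma/2-K)v^{q+1}\le C(v+v^q+v^{1-q})$. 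Fixing once and for all $\gamma:=2K+2$ (independent of $T$ and $\eta$) forces $v(x_0,t_0)\le R_0$ for some $R_0$ depending only on $\Omega,c,f,\phi,q,u_0$.

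For $x_0\in\partial\Omega,\ t_0>0$ I would repeat the multiplier argument of Steps~5--7 of Proposition~\ref{prop:exuniquegra} with $\tilde\psi:=\rho\tilde w=e^{-\gamma t}\rho w$ in place of $\rho w$. The boundary-derivative computation of Step~5 is purely spatial and is unaffected by the positive factor $e^{-\gamma t}$, so the same multiplier $\rho$ (with $\rho(x_0)=1$ and $\partial_{\vec{\mathbf{n}}}\rho(x_0)=-(q+1)(C_0+\varepsilon_0)$) pushes the maximum of $\tilde\psi$ into the inscribed ball. The chain of inequalities of Steps~6--7 then delivers, without the coercivity gain, an inequality of the form $\gamma\psi\le K'v^{q+1}+C(v+v^q+v^{1-q})$ at the new interior maximizer, where $K'=K'(\Omega,c,f,\phi,q)$ now additionally absorbs the curvature contributions $C_0|c|+\frac{(n-1)C_0}{K_0}+(q+1)C_0^2$ that the coercivity used to dominate. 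Enlarging $\gamma$ if necessary so that $\gamma>2(K+K')+2$ then bounds $v$ at this maximizer as well.

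Putting the three cases together, $\tilde w$ is bounded on $\overline\Omega\times[0,T]$ by a constant $C_1=C_1(\Omega,c,f,\phi,q,u_0)$ independent of $T$, and translating back gives
\[
|Du(x,t)|\le R_T:=(C_1 e^{\gamma T})^{1/(q+1)}\qquad\text{for }(x,t)\in\overline\Omega\times[0,T].
\]
The main technical obstacle is the boundary book-keeping: one must verify that every coefficient which, in the proof of Proposition~\ref{prop:exuniquegra}, was controlled using the coercivity slack $\delta$ admits, without \eqref{condition:c}, a representation as a $T$- and $\eta$-independent multiple of $v^{q+1}$, so that a single large but finite choice of $\gamma$ suffices to dominate all of them simultaneously. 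Once this is done, the whole $T$-dependence of the estimate is concentrated in the single exponential factor $e^{\gamma T/(q+1)}$, and standard parabolic theory then converts these \emph{a priori} bounds into the claimed existence and local Lipschitz regularity.
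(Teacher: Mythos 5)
Your proposal is correct and takes essentially the same approach as the paper: the paper's own proof introduces the time-dependent multiplier $\rho(x,t)=e^{-Mt}\rho^0(x)$ (with $\rho^0\equiv 1$ in the interior case and a spatial ball-centered multiplier on the boundary), which is identical to your $\tilde w=e^{-\gamma t}w$ and $\tilde\psi=e^{-\gamma t}\rho w$, and then chooses $M$ (your $\gamma$) large enough to dominate the same noncoercive terms, yielding the $T$-dependent bound through the factor $e^{MT}$.
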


Note that no assumption on the forcing term $c$ is made, except for being $C^{1,\alpha}$. In the following proof of Proposition \ref{prop:exuniquegratime}, we introduce a time-dependent multiplier.

\begin{proof}[Proof of Proposition \ref{prop:exuniquegratime}]

Now we only assume \eqref{assumtion:c} and \eqref{assumtion:f}. Let $T\in(0,\infty)$, $\eta\in(0,1]$. Let $u=u^{\eta}\in C^{2,\sigma}(\overline{\Omega}\times[0,T])\cap C^{3,\sigma}(\Omega\times(0,T])$ be a solution to \eqref{eq} for some $\sigma\in(0,1)$. Let $w:=v^{q+1}-(q+1)\phi Du\cdot Dh$ on $\overline{\Omega}\times[0,T]$. Let $R_T>1$ denote a constant that may depend on $T\in(0,\infty)$ but not on $\eta\in(0,1)$. As before, $R_T>1$ may vary line by line.

The goal is to prove that $w(x,t)\leq R_T$ for all $(x,t)\in\overline{\Omega}\times[0,T]$. Once we achieve this goal, we complete the proof of Proposition \ref{prop:exuniquegratime} by using the fact that $v^{q+1}-Cv\leq w$ for some constant $C>0$ depending only on $\|h\|_{C^1(\overline{\Omega})},\ \|\phi\|_{C^0(\overline{\Omega})}$ (and $q>0$).

Let $M>1$ be a constant to be determined. Let $(x_0,t_0)\in\argmax_{\overline{\Omega}\times[0,T]}{e^{-Mt}}w(x,t)$. We claim that in both cases of $t_0=0$ and $t_0>0$, $v(x_0,t_0)$ is bounded by a constant $R_T$ that may depend on $T\in(0,\infty)$ but not on $\eta\in(0,1]$. In the case of $t_0=0$, we readily get a local gradient estimate. Indeed,
$$
e^{-Mt}w(x,t)\leq w(x_0,0)\leq R\qquad\text{for all }(x,t)\in\overline{\Omega}\times[0,T]
$$
for some constant $R>1$ depending only on $\|u_0\|_{C^1(\overline{\Omega})},\ \|h\|_{C^1(\overline{\Omega})},\ \|\phi\|_{C^0(\overline{\Omega})}$, which proves our goal. Here, we have used the fact that $\eta\in(0,1]$.

\medskip

It remains the case of $t_0>0$. Let $\rho(x,t)=e^{-Mt}\rho^0(x)$, where $\rho^0(x)$ will be chosen again according to the following cases; again divide into $x_0\in\Omega$ and $x_0\in\partial\Omega$.

\medskip

\noindent {\bf Case 1: $x_0\in\Omega$.}

\medskip

Take $\rho^0\equiv 1$. Since $x_0\in\Omega,\ t_0>0$, and $D\rho=0,\ D^2\rho=0$, we have that
\begin{align*}
0&\geq\frac{1}{(q+1)\rho}\left(\textrm{tr}\{a(Du)D^2\psi\}-\psi_t\right)\\
&\geq\frac{1}{q+1}(\textrm{tr}\{a(Du)D^2w\}-w_t)-\frac{\rho_tw}{(q+1)\rho}
\end{align*}
at $(x_0,t_0)$, where $\psi:=\rho w$ as before.

Following the same argument in Step 1 of the proof of Proposition \ref{prop:exuniquegra}, we see that there exist constants $R>1,\ C>0$ independent of $T\in(0,\infty),\ \eta\in(0,1]$ such that \eqref{maxprinciplesimplifiedr} holds true at $(x_0,t_0)$ for $v>R$. Moreover, since $x_0\in\argmax_{\overline{\Omega}}w(\cdot,t_0)\cap\Omega$ so that $Dw=0$ at $(x_0,t_0)$, \eqref{term3J1} (for some constant $C>0$ independent of $T\in(0,\infty),\ \eta\in(0,1]$), \eqref{secondderivative1}, \eqref{E_iF_i}, \eqref{secondderivative21}, \eqref{E_1} are valid at $(x_0,t_0)$. Therefore, we can follow the estimates in Step 3, Step 4 of the proof of Proposition \ref{prop:exuniquegra} to conclude that for a given $\varepsilon\in(0,1)$, there exists a constant $R_{\varepsilon}>1$ that may depend on $\varepsilon\in(0,1)$ but not on $T\in(0,\infty),\ \eta\in(0,1]$ and a constant $C>0$ independent of $T\in(0,\infty),\ \eta\in(0,1],\ \varepsilon\in(0,1)$ such that \eqref{J1r}, \eqref{J2r} are valid at $(x_0,t_0)$ for $v>R_{\varepsilon}$. We take $\varepsilon=\frac{1}{2}$, and we note that $-\frac{\rho_tw}{(q+1)\rho}=\frac{Mw}{q+1}$. Together with the fact that $w\geq v^{q+1}-(q+1)\|\phi\|_{C^0(\overline{\Omega})}\|h\|_{C^1(\overline{\Omega})}v$, we see that there exists a constant $R>1,\ C>0$ independent of $T\in(0,\infty),\ \eta\in(0,1]$ such that
$$
0\geq\left(\frac{c^2}{2(n-1)}-|Dc|+\frac{M}{q+1}\right)v^{q+1}-C(v+v^q)
$$
at $(x_0,t_0)$ for $v>R$. From the assumption \eqref{assumtion:c}, we can choose a constant $M>1$ independent of $T\in(0,\infty),\ \eta\in(0,1]$ such that
$$
\frac{c(x,z)^2}{2(n-1)}-|Dc(x,z)|+\frac{M}{q+1}>1
$$
for all $(x,z)\in\overline{\Omega}\times\mathbb{R}$. Since there exists a constant $R_0>R$ independent of $T\in(0,\infty),\ \eta\in(0,1]$ such that
$$
0<v^{q+1}-C(v+v^q)
$$
for $v>R_0$, it must hold true that $v(x_0,t_0)\leq R_0$ with the above choice of $M>1$. Using once again the fact that $w\leq v^{q+1}+(q+1)\|\phi\|_{C^0(\overline{\Omega})}\|h\|_{C^1(\overline{\Omega})}v$, we get
$$
e^{-Mt}w(x,t)\leq e^{-Mt_0}w(x_0,t_0)\leq R,\qquad\text{for all }(x,t)\in\overline{\Omega}\times[0,T]
$$
for some constant $R>1$ independent of $T\in(0,\infty),\ \eta\in(0,1]$, which proves our goal in Case 1.

\medskip

\noindent {\bf Case 2: $x_0\in\partial\Omega.$}

\medskip

Since $x_0\in\argmax_{\overline{\Omega}}w(\cdot,t_0)$, we have both $\frac{\partial w}{\partial \Vec{\mathbf{n}}}(x_0,t_0)\geq0$ and $x_0\in\argmax_{\partial\Omega}w(\cdot,t_0)$. From the latter, we see that for a given $\varepsilon_0\in(0,1)$, there exists a constant $R_{\varepsilon_0}>1$ that may depend on $\varepsilon_0\in(0,1)$ but not on $T\in(0,\infty),\ \eta\in(0,1]$ (also not on $x_0\in\partial\Omega$) such that $w>0$ and \eqref{w_n} holds at $(x_0,t_0)$ for $v>R_{\varepsilon_0}$, where $L:=(q+1)(C_0+\varepsilon_0)$.

As in Step 5 of the proof of Theorem \ref{thm:global-grad}, if $C_0<0$, we see that, by taking $\varepsilon_0=\frac{1}{2}\min\{\frac{1}{2},-\frac{1}{2}C_0\}$, there exists a constant $R>1$ independent of $T\in(0,\infty),\ \eta\in(0,1]$ such that $v(x_0,t_0)\leq R$. Here, we have used the fact that $\frac{\partial w}{\partial \Vec{\mathbf{n}}}(x_0,t_0)\geq0$, as in Step 5 of the proof of Theorem \ref{thm:global-grad}. Using the fact that $w\leq v^{q+1}+Cv$ for some constant $C>0$ depending only on $\|h\|_{C^1(\overline{\Omega})},\ \|\phi\|_{C^0(\overline{\Omega})}$, we consequently see that
$$
e^{-Mt}w(x,t)\leq e^{-Mt_0}w(x_0,t_0)\leq R\qquad\text{for all }(x,t)\in\overline{\Omega}\times[0,T], 
$$
and thus,
$$
w(x,t)\leq Re^{MT}\qquad\text{for all }(x,t)\in\overline{\Omega}\times[0,T].
$$
We achieved our goal accordingly when $C_0<0$. Now we assume the other case when $C_0\geq0$.

Let $B=B(x_c,K_0)$ be the open ball with the center $x_c:=x_0-K_0\Vec{\mathbf{n}}(x_0)$ so that $B\subseteq\Omega$ and $\overline{B}\cap(\mathbb{R}^n\setminus\Omega)=\{x_0\}$. For $x\in\overline{B}$, we let
$$
\rho^0(x)=-\frac{L}{2K_0}|x-x_c|^2+\frac{LK_0}{2}+1.
$$
We then extend the function $\rho^0$ on $\overline{B}$ to a function (keeping the same notation $\rho^0$) on $\mathbb{R}^n$ satisfying the requirement that $\rho^0(x)\geq\frac{1}{2}$ for all $x\in\mathbb{R}^n$, and that $\rho^0(x)$ is $C^{\infty}$ on $\mathbb{R}^n$, a nondecreasing function in $|x-x_c|$. Then, $\rho^0(x_0)=1,\ \frac{\partial \rho^0}{\partial \Vec{\mathbf{n}}}(x_0)=-L$. Hence, for $\varepsilon_0\in(0,1)$, there exists a constant $R_{\varepsilon_0}>1$ that may depend on $\varepsilon_0\in(0,1)$ but not on $T\in(0,\infty),\ \eta\in(0,1]$ such that $w>0$ and \eqref{w_n} at $(x_0,t_0)$ are valid if $v>R_{\varepsilon_0}$, and thus that $\frac{\partial (\rho^0w)}{\partial \Vec{\mathbf{n}}}<0$ at $(x_0,t_0)$ if $v>R_{\varepsilon_0}$. %Note that if there exists an absolute number $\varepsilon_0\in(0,1)$ such that $v(x_0,t_0)\leq R_{\varepsilon_0}$, then we obtain the conclusion of Proposition \ref{prop:exuniquegratime}, since $R_{\varepsilon_0}>1$ does not depend on $\eta\in(0,1]$, and $\varepsilon_0\in(0,1)$ is also fixed. From now on, we assume, for a given $\varepsilon_0\in(0,1)$, that $v(x_0,t_0)>R_{\varepsilon_0}$.

Since $\rho^0(z)=\rho^0(x_0)=1$ for all $z\in\partial B$, and by the choice of $(x_0,t_0)$, we have
$$
e^{-Mt}\rho^0(z)w(z,t)\leq e^{-Mt_0}\rho^0(x_0)w(x_0,t_0),\qquad\text{for all }(z,t)\in\partial B\times[0,T]. 
$$
Since $\frac{\partial (\rho^0w)}{\partial \Vec{\mathbf{n}}}(x_0,t_0)<0$, we also have
$$
e^{-Mt_0}\rho^0(x_0)w(x_0,t_0)<e^{-Mt_0}\rho^0(x)w(x,t_0)
$$
for some $x\in B$. Combining these two points, we conclude that a maximizer $(x_1,t_1)$ of $e^{-Mt}\rho^0(x)w(x,t)$ on $\overline{B}\times[0,T]$ occurs only inside $B$, i.e., $x_1$ must be inside $B$.

Let $(x_1,t_1)\in\argmax_{\overline{B}\times[0,T]}e^{-Mt}\rho^0(x)w(x,t)$ with $x_1\in B$. If $t_1=0$, then
$$
\max_{\overline{B}\times[0,T]}e^{-Mt}\rho^0(x)w(x,t)=\rho^0(x_1)w(x_1,0)\leq R
$$
for some constant $R>0$ depending only on $\|u_0\|_{C^1(\overline{\Omega})},\ \|h\|_{C^1(\overline{\Omega})},\ \|\phi\|_{C^0(\overline{\Omega})},\ \Omega$. Here, we have used the fact that $\eta\in(0,1],\ \varepsilon_0\in(0,1)$. It consequently yields that for all $(x,t)\in\overline{\Omega}\times[0,T]$,
\begin{align*}
e^{-Mt}\rho^0(x)w(x,t)=\rho^0(x)(e^{-Mt}w(x,t))&\leq\left(\frac{\rho^0(x)}{\rho^0(x_0)}\right)\rho^0(x_0)(e^{-Mt_0}w(x_0,t_0))\\
&\leq R\max_{\overline{B}\times[0,T]}e^{-Mt}\rho^0(x)w(x,t)\leq R
\end{align*}
since $(x_0,t_0)\in\overline{B}\times[0,T]$ and $\frac{\rho^0(x)}{\rho^0(x_0)}\leq R$ for all $x\in\overline{\Omega}$. Here, constants $R>1$ change side by side.
Then, for all $(x,t)\in\overline{\Omega}\times[0,T]$,
$$
w(x,t)\leq \frac{R}{\rho^0(x)}e^{Mt}\leq Re^{MT}
$$
since $\rho^0(x)\geq\frac{1}{2}$ for all $x\in\mathbb{R}^n$, which proves our goal. Now it remains the case when $t_1>0$.

We fix $(x_1,t_1)\in\argmax_{\overline{B}\times[0,T]}e^{-Mt}\rho^0(x)w(x,t)$ with $x_1\in B$, $t_1>0$. Applying the maximum principle to $\psi=\rho w$ at $(x_1,t_1)$, we obtain
\begin{align*}
0&\geq \frac{1}{(q+1)\rho}\left(\textrm{tr}\{a(Du)D^2\psi\}-\psi_t\right)\\
&=\frac{w}{(q+1)\rho}\textrm{tr}\{a(Du)D^2\rho\}+\frac{2}{(q+1)\rho}\textrm{tr}\{a(Du)Dw\otimes D\rho\}\\
&\qquad\qquad\qquad\qquad\qquad\qquad+\frac{1}{q+1}(\textrm{tr}\{a(Du)D^2w\}-w_t)-\frac{\rho_tw}{(q+1)\rho}
\end{align*}
at $(x_1,t_1)$. Following the same computations up to \eqref{maxprinciplerhofinalr} in Step 6 of the proof of Proposition \ref{prop:exuniquegra}, we see that there exist a constant $C>0$ independent of $T\in(0,\infty),\ \eta\in(0,1],\ \varepsilon_0\in(0,1)$ and a constant $R_{\varepsilon_0}>1$ that may depend on $\varepsilon_0\in(0,1)$ but not on $T\in(0,\infty),\ \eta\in(0,1]$ such that, at $(x_1,t_1)$,
\begin{align*}
0&\geq\frac{w}{(q+1)\rho}\textrm{tr}\{a(Du)D^2\rho\}+\left(\frac{2}{(q+1)\rho}\textrm{tr}\{a(Du)Dw\otimes D\rho\}+(q+1)V_1\right)\nonumber\\
&\qquad\qquad\qquad\qquad\qquad\qquad\qquad+J'_1+J'_2-|Dc|v^{q+1}+\frac{M}{q+1}v^{q+1}-C(v+v^q)%\label{maxprinciplerhofinalr}
\end{align*}
if $v=v(x_1,t_1)>R_{\varepsilon_0}$, with the same definitions of $J'_1,\ J'_2$ as in Step 6 of the proof of Proposition \ref{prop:exuniquegra}. Here, we have used the fact that $-\frac{\rho_tw}{(q+1)\rho}=\frac{Mw}{q+1}$ and that $w\geq v^{q+1}-(q+1)\|\phi\|_{C^0(\overline{\Omega})}\|h\|_{C^1(\overline{\Omega})}v$. Note that $\frac{D\rho}{\rho}=\frac{D\rho^0}{\rho^0},\ \frac{D^2\rho}{\rho}=\frac{D^2\rho^0}{\rho^0}$, and that $x_1\in\argmax_{\overline{\Omega}}\rho^0(\cdot)w(\cdot,t_1)\cap\Omega$. Therefore, we have $wD\rho^0+\rho^0Dw=0$ at $(x_1,t_1)$. Consequently, there exist a constant $C>0$ independent of $T\in(0,\infty),\ \eta\in(0,1],\ \varepsilon_0\in(0,1)$ and a constant $R_{\varepsilon_0}>1$ that may depend on $\varepsilon_0\in(0,1)$ but not on $T\in(0,\infty),\ \eta\in(0,1]$ such that \eqref{term1rho}, \eqref{term2rho}, \eqref{term3J1'}, \eqref{secondderivative1rho}, \eqref{secondderivative21rho} hold true at $(x_1,t_1)$ if $v>R_{\varepsilon_0}$, and thus that \eqref{J1'}, \eqref{J2'r} hold true at $(x_1,t_1)$ if $v>R_{\varepsilon_0}$.

Hence, there exist a constant $C>0$ independent of $T\in(0,\infty),\ \eta\in(0,1],\ \varepsilon_0\in(0,1)$ and a constant $R_{\varepsilon_0}>1$ that may depend on $\varepsilon_0\in(0,1)$ but not on $T\in(0,\infty),\ \eta\in(0,1]$ such that, at $(x_1,t_1)$,
\begin{align*}
0&\geq\left(\frac{1-\varepsilon_0}{n-1}c^2-|Dc|-(C_0+\varepsilon_0)|c|-\frac{(n-1)(C_0+\varepsilon_0)}{K_0}\right.\\
&\qquad\qquad\qquad\qquad\left.-(q+1)(C_0+\varepsilon_0)^2(1+\varepsilon_0)+\frac{M}{q+1}\right)v^{q+1}-C(v+v^q)-C\varepsilon_0^{-1}v^{1-q}.
\end{align*}
if $v>R_{\varepsilon_0}$. Now, take $\varepsilon_0=\frac{1}{2}$, and take $M>1$ large enough, possible due to the assumption \eqref{assumtion:c}, that
$$
\frac{M}{q+1}-|Dc|-(C_0+\frac{1}{2})|c|-\frac{(n-1)(C_0+\frac{1}{2})}{K_0}-\frac{3}{2}(q+1)(C_0+\frac{1}{2})^2>1,
$$
where $c=c(x,z)$, for all $(x,z)\in\overline{\Omega}\times\mathbb{R}$. Since there exists a constant $R_0>R$ independent of $T\in(0,\infty),\ \eta\in(0,1]$ such that
$$
0<v^{q+1}-C(v+v^q)
$$
for $v>R_0$, it must hold true that $v(x_1,t_1)\leq R_0$. Consequently, for all $(x,t)\in\overline{\Omega}\times[0,T]$,
\begin{align*}
e^{-Mt}w(x,t)\leq e^{-Mt_0}w(x_0,t_0)&=e^{-Mt_0}\rho^0(x_0)w(x_0,t_0)\frac{1}{\rho^0(x_0)}\\
&\leq e^{-Mt_1}\rho^0(x_1)w(x_1,t_1)\frac{1}{\rho^0(x_0)}\leq R.
\end{align*}
Here, we have used the fact that $w\leq v^{q+1}+(q+1)\|\phi\|_{C^0(\overline{\Omega})}\|h\|_{C^1(\overline{\Omega})}v$ so that $w(x_1,t_1)\leq R$. Therefore,
$$
w(x,t)\leq Re^{MT}
$$
for all $(x,t)\in\overline{\Omega}\times[0,T]$, which proves our goal in Case 2. This completes the proof.
\end{proof}

Finally, when $\Omega$ is strictly convex, we can recover gradient estimates in \cite{WWX}. The following proof uses a strictly convex $C^2$ defining function of $\Omega$ when we choose a multiplier.

\begin{proof}[Proof of Corollary \ref{cor:strcvxlevelset}]
In order to prove Corollary \ref{cor:strcvxlevelset}, it suffices to verify following, which is a similar statement to Proposition \ref{prop:exuniquegra}; let $\Omega$ be a $C^{3}$ strictly convex domain. Let $T\in(0,\infty)$, $\eta\in(0,1]$, and let $u=u^{\eta}\in C^{2,\sigma}(\overline{\Omega}\times[0,T])\cap C^{3,\sigma}(\Omega\times(0,T])$ be a solution to \eqref{eq} for some $\sigma\in(0,1)$, now with $c\equiv0$. Then, it holds that
$$
\lVert Du^{\eta}\rVert_{L^{\infty}(\overline{\Omega}\times[0,T])}\leq R,
$$
where $R>1$ is a constant independent of $T\in(0,\infty)$ and of $\eta\in(0,1]$.

Let $g$ be a $C^2$ defining function of $\Omega$ such that $g<0$ in $\Omega$, $g=0$ on $\partial\Omega$, $D^2g\geq k_0I_n$ on $\overline{\Omega}$ for some $k_0>0$, $\sup_{\Omega}|Dg|\leq1$, $\frac{\partial g}{\partial \Vec{\mathbf{n}}}=1$ on $\partial\Omega$. Let $\rho=\gamma g+1$, where $\gamma\in\left(0,\frac{1}{2}\min\{1,\|g\|^{-1}_{C^0(\overline{\Omega})}\}\right)$ so that $\frac{1}{2}\leq\rho\leq1$ on $\overline{\Omega}$.

Let $(x_0,t_0)\in\argmax_{\overline{\Omega}\times[0,T]}\rho w$, where $w$ is defined as in the proof of Proposition \ref{prop:exuniquegra}. Again, our goal is to show $v(x_0,t_0)\leq R$, where $R>1$ is a constant independent of $T\in(0,\infty),\ \eta\in(0,1]$. Once it is shown, then we have a global gradient estimate, as
$$
v(x,t)=\frac{1}{\rho(x)}\rho(x)v(x,t)\leq2\rho(x_0)v(x_0,t_0)\leq R\qquad\text{for all }(x,t)\in\overline{\Omega}\times[0,T],
$$
together with the fact that $\rho\geq\frac{1}{2}$ on $\overline{\Omega}$. In the case of $t_0=0$, we readily have that there exists a constant $R>1$ depending only on $\|u_0\|_{C^1(\overline{\Omega})},\ \|h\|_{C^1(\overline{\Omega})},\ \|\phi\|_{C^0(\overline{\Omega})}$ such that $w(x_0,t_0)=w(x_0,0)\leq R$. Using the fact that $v^{q+1}-(q+1)\|\phi\|_{C^0(\overline{\Omega})}\|h\|_{C^1(\overline{\Omega})}v\leq w$, we see that there exists a constant $R>1$ independent of $T\in(0,\infty),\ \eta\in(0,1]$ such that $v(x_0,t_0)\leq R$, which proves our goal.

We assume the remaining case when $t_0>0$. We again divide the proof into two cases, but we consider the case $x_0\in\partial\Omega$ first, and the case $x_0\in\Omega$ next.

\medskip

\textbf{Case 1.} $x_0\in\partial\Omega$.

\medskip

In this case, it holds that $x_0\in\argmax_{\partial\Omega}w(\cdot,t_0)$ since $\rho\equiv1$ on $\partial\Omega$. Therefore, by the argument of Step 5 of the proof of Proposition \ref{prop:exuniquegra}, we see that for a given $\varepsilon_0\in(0,1)$, there exists a constant $R_{\varepsilon_0}>1$ that may depend on $\varepsilon_0\in(0,1)$ but not on $T\in(0,\infty),\ \eta\in(0,1]$ such that $w>0$ and \eqref{w_n} hold true at $(x_0,t_0)$ if $v>R_{\varepsilon_0}$. Since $\Omega$ is strictly convex, we have $C_0<0$. Take $\varepsilon_0=\frac{1}{2}\min\{1,-C_0\}\in(0,1)$, and choose a constant $R=R_{\varepsilon_0}>1$ accordingly. If $v(x_0,t_0)\leq R$, we achieve our goal, and now we assume that $v(x_0,t_0)>R$ so that $w>0$ and\eqref{w_n} are valid at $(x_0,t_0)$. By replacing $R>1$ by a larger one if necessary, we also have that $w(x_0,t_0)>0$ if $v(x_0,t_0)>R$ (from the fact that $w\geq v^{q+1}-(q+1)\|\phi\|_{C^0(\overline{\Omega})}\|h\|_{C^1(\overline{\Omega})}v>0$).

Note that $L=(q+1)(C_0+\varepsilon_0)\leq\frac{1}{2}(q+1)C_0<0$. Since $x_0\in\argmax_{\overline{\Omega}}\rho(\cdot,t_0) w(\cdot,t_0)$, we have $\frac{\partial (\rho w)}{\partial \Vec{\mathbf{n}}}(x_0,t_0)\geq0$. However, if we choose $\gamma\in\left(0,\frac{1}{2}\min\{1,\|g\|^{-1}_{C^0(\overline{\Omega})},-L\}\right)$ so that $\gamma<-L$, then, by \eqref{w_n},
$$
\frac{\partial (\rho w)}{\partial \Vec{\mathbf{n}}}=\rho\frac{\partial w}{\partial \Vec{\mathbf{n}}}+w\frac{\partial \rho}{\partial \Vec{\mathbf{n}}}<Lw+\gamma w<0.
$$
at $(x_0,t_0)$, which contradicts to $\frac{\partial (\rho w)}{\partial \Vec{\mathbf{n}}}(x_0,t_0)\geq0$. Therefore, it must hold true that $v(x_0,t_0)\leq R$, which proves our goal.

\medskip

\textbf{Case 2.} $x_0\in\Omega$.

\medskip

In this case, a maximizer $(x_0,t_0)$ of $\psi:=\rho w$ happens in $B\times(0,T]$, and thus we can apply the maximum principle, which results in \eqref{maxprinciplerhor}, \eqref{maxprincipleexpandedrhor} at $(x_0,t_0)$. Following the same computations as in Step 6 of the proof of Proposition \ref{prop:exuniquegra}, we have \eqref{Vexpansion} at $(x_0,t_0)$. Fix $\varepsilon_0=\frac{1}{2}$. Then, there exist constants $R>1,\ C>0$ independent of $T\in(0,\infty),\ \eta\in(0,1]$ such that \eqref{maxprinciplesimplifiedrhor}, \eqref{maxprinciplerhofinalr} are true at $(x_0,t_0)$ if $v>R$ with the same definitions of $J'_1,\ J'_2$ and $\varepsilon_0=\frac{1}{2}$, $c\equiv0$. Now that we have chosen a multiplier different from the one in the proof of Proposition \ref{prop:exuniquegra}, we estimate the first term and the second term of \eqref{maxprinciplerhofinalr}, which will replace \eqref{term1rho} and \eqref{term2rho}, respectively.

We start with the first term of \eqref{maxprinciplerhofinalr}. Since $\rho=\gamma g+1$ and $D^2\rho\geq\gamma k_0 I_n$, there exists a constant $C>0$ independent of $T\in(0,\infty),\ \eta\in(0,1]$ such that
\begin{align}
&\ \ \ \frac{w}{(q+1)\rho}\textrm{tr}\{a(Du)D^2\rho\}\nonumber\\
&\geq\frac{1}{(q+1)\rho}\gamma k_0\left(\frac{\eta^2}{v^2}+n-1\right)(v^{q+1}-(q+1)\|\phi\|_{C^0(\overline{\Omega})}\|h\|_{C^1(\overline{\Omega})}v)\nonumber\\
&\geq\frac{(n-1)\gamma k_0}{q+1}v^{q+1}-C(v+v^q)\label{term1rhostrcvx}
\end{align}
at $(x_0,t_0)$ for $v>1$. Here, we have used the fact that $\eta\in(0,1]$ and that $\frac{1}{2}\leq\rho\leq1$ on $\overline{\Omega}$.

We estimate the second term of \eqref{maxprinciplerhofinalr}. Since $D\rho=\gamma Dg$ and $|Dg|\leq1,\ \rho\geq\frac{1}{2}$ on $\overline{\Omega}$, we have
$$
0\leq\textrm{tr}\left\{a(Du)\frac{D\rho}{\rho}\otimes\frac{D\rho}{\rho}\right\}\leq\left|\frac{D\rho}{\rho}\right|\leq4\gamma^2.
$$
Choose a constant $R>1$ independent of $T\in(0,\infty),\ \eta\in(0,1]$ such that $|wv^{-q-1}-1|<\frac{1}{2}$ for $v>R$. Then, we see that there exist a constant $R>1,\ C>0$ independent of $T\in(0,\infty),\ \eta\in(0,1]$ such that
\begin{align}
&\ \ \ \frac{2}{(q+1)\rho}\textrm{tr}\{a(Du)Dw\otimes D\rho\}+(q+1)V_1\nonumber\\
&=\frac{(wv^{-1-q}-2)w}{q+1}\textrm{tr}\left\{a(Du)\frac{D\rho}{\rho}\otimes\frac{D\rho}{\rho}\right\}\nonumber\\
&\geq-\frac{6\gamma^2}{q+1}v^{q+1}-Cv\label{term2rhostrcvx}
\end{align}
at $(x_0,t_0)$ for $v>R$.

Following the computations of Step 7 of the proof of Proposition \ref{prop:exuniquegra}, we see that
there exist constants $R>1,\ C>0$ independent of $T\in(0,\infty),\ \eta\in(0,1]$ such that \eqref{term1J1'}, \eqref{J1'} hold at $(x_0,t_0)$ for $v>R$ with $\varepsilon_0=\frac{1}{2},\ c=0$. Note that the left hand side of \eqref{term3J1'} is zero, as $c=0$. As $\rho Dw+wD\rho=0$ at $(x_0,t_0)$, \eqref{secondderivative1rho}, \eqref{secondderivative21rho} are valid at $(x_0,t_0)$, and therefore, there exists constants $R>1,\ C>0$ independent of $T\in(0,\infty),\ \eta\in(0,1]$ such that \eqref{J2'r} holds at $(x_0,t_0)$ if $v>R$ with $\varepsilon_0=\frac{1}{2}$.

All in all, by \eqref{maxprinciplerhofinalr}, \eqref{J1'}, \eqref{J2'r}, \eqref{term1rhostrcvx}, \eqref{term2rhostrcvx}, there exist constants $R>1,\ C>0$ independent of $T\in(0,\infty),\ \eta\in(0,1]$ such that
\begin{align}
0\geq\frac{\gamma}{q+1}((n-1)k_0-6\gamma)v^{q+1}-C(v+v^q)\nonumber
\end{align}
at $(x_0,t_0)$ for $v>R$. Choose $\gamma=\frac{1}{4}\min\left\{1,\|g\|^{-1}_{C^0(\overline{\Omega})},\frac{(n-1)k_0}{6}\right\}\in(0,1)$ so that $\frac{\gamma}{q+1}((n-1)k_0-6\gamma)\geq\frac{3(n-1)k_0}{16(q+1)}>0$. Since there exists a constant $R_0>R$ independent of $T\in(0,\infty),\ \eta\in(0,1]$ such that
$$
0<\frac{3(n-1)k_0}{16(q+1)}v^{q+1}-C(v+v^q)
$$
for $v>R_0$, it must hold that $v=v(x_0,t_0)\leq R_0$, which proves our goal in Case 2. This completes the proof.
\end{proof}

%%%%%%%%%%%%%%%%%%%%%%%%%%%%%%%%%%%%%%%%%%%%%%%%%%%%%%%%%%%%%%%%%%%%%

\section{The additive eigenvalue problem}\label{sec:homogenization}
In this section, we prove Theorem \ref{thm:eigengraph}, Theorem \ref{thm:asympgraph} and Theorem \ref{thm:eigenlevelset}. We leave the main reference \cite{WWX}, and we will highlight details that are different from \cite{WWX}. We also refer to \cite[Section 7]{Hung} that go through the limit $k\to0$ first and $\eta\to0$ next.

We consider
\begin{equation}\label{eq:eigenlevelset1}
\begin{cases}
-\sum_{i,j=1}^n\left(\delta^{ij}-\frac{u_iu_j}{\eta^2+|Du|^2}\right)u_{ij}-c(x)\sqrt{\eta^2+|Du|^2}+f(x)=-ku \quad &\text{ in } \Omega,\\
\displaystyle \frac{\partial u}{\partial \Vec{\mathbf{n}}}=\phi(x)v^{1-q}\quad &\text{ on } \partial\Omega,
\end{cases}
\end{equation}
where $k\in(0,1),\ \eta\in(0,1]$ and $v=\sqrt{\eta^2+|Du|^2}$. Note that the choices $\eta=1,\ q>0$ and $\eta=0,\ q=1$ correspond to \eqref{eq:eigengraph} and \eqref{eq:eigenlevelset}, respectively. The case $\eta=0,\ q=1$ will be studied by obtaining estimates uniform in $\eta\in(0,1]$ when $q=1$.

First of all, we start with \emph{a priori} $C^0$ and $C^1$ estimates and get the existence of solutions of \eqref{eq:eigenlevelset1} using the method of continuity with the estimates.

\begin{proposition}\label{prop:C0C1estimate}
Let $\Omega$ be a $C^{\infty}$ bounded domain in $\mathbb{R}^n,\ n\geq2$. Assume that $c\in C^{\infty}(\overline{\Omega})$ satisfies \eqref{condition:c}. Then there exists a unique solution $u\in C^{\infty}(\overline{\Omega})$ of \eqref{eq:eigenlevelset1}. Moreover, we have the following estimate uniform in $k\in(0,1)$ and also in $\eta\in(0,1]$ when $q=1$;
$$
\sup_{\overline{\Omega}}|ku|+\sup_{\overline{\Omega}}|Du|\leq R,
$$
where $R>1$ is a constant independent of $k\in(0,1)$ and also of $\eta\in(0,1]$ when $q=1$.
\end{proposition}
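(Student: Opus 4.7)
The plan is to establish uniform $C^0$ and $C^1$ \emph{a priori} estimates for \eqref{eq:eigenlevelset1} (uniform in $k\in(0,1)$ and also in $\eta\in(0,1]$ when $q=1$), then to deduce existence via the Leray--Schauder fixed point theorem, and finally to bootstrap to $C^{\infty}(\overline{\Omega})$ regularity by standard Schauder theory. Uniqueness will follow from the strict monotonicity of the zeroth-order term $-ku$.

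For the $C^0$ estimate on $|ku|$, consider a maximizer $x_0\in\overline{\Omega}$ of $u$. If $x_0\in\Omega$, then $Du(x_0)=0$, so $v(x_0)=\eta\in(0,1]$ and $a(Du(x_0))=I_n$; since $D^2u(x_0)\leq0$, one has $-\textrm{tr}\{a(Du)D^2u\}(x_0)\geq0$, so the equation gives
\[
ku(x_0)\leq c(x_0)\eta-f(x_0)\leq\|c\|_{L^{\infty}(\overline{\Omega})}+\|f\|_{L^{\infty}(\overline{\Omega})}.
\]
The analogous inequality at a minimizer bounds $ku$ from below. If the extremum lies on $\partial\Omega$, I will use a standard barrier construction: perturb $u$ by $\epsilon\Psi$ with $\Psi\in C^2(\overline{\Omega})$ chosen so that $\partial\Psi/\partial\vec{\mathbf{n}}\leq-1$ on $\partial\Omega$, which forces the extremum of $u+\epsilon\Psi$ strictly into $\Omega$ provided $\epsilon$ exceeds the boundary Neumann data, then pass $\epsilon\to0$ after invoking the Hopf boundary point lemma to preclude the degenerate case. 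This yields $\|ku\|_{L^{\infty}(\overline{\Omega})}\leq R$ uniformly.

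For the $C^1$ estimate, once $|ku|\leq R$ uniformly, define $\widetilde{f}_k(x):=f(x)+ku(x)$, which satisfies a bound independent of $k$ (and of $\eta$ when $q=1$). Then \eqref{eq:eigenlevelset1} becomes the stationary analogue of \eqref{eq} with bounded transport term $\widetilde{f}_k$ in place of $f$; crucially, the forcing term $c$ is unchanged and still satisfies \eqref{condition:c}. I will then transplant the Bernstein-type argument of Proposition \ref{prop:exuniquegra} to the stationary setting: apply the maximum principle to $w:=v^{q+1}-(q+1)\phi\,Du\cdot Dh$ (times the multiplier $\rho$ from Step 6 of that proof when the maximum is on $\partial\Omega$), expand via Lemma \ref{lem:afternotation}, and dominate the coefficient of $v^{q+1}$ using \eqref{condition:c}. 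The time-derivative term $u_t$ is absent here, which only simplifies the chain of inequalities, so the same conclusion $\|Du\|_{L^{\infty}(\overline{\Omega})}\leq R$ follows uniformly in $k$ (and in $\eta$ when $q=1$).

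With these uniform estimates in hand, existence is obtained by Leray--Schauder. The subtle point, highlighted in the introduction, is that the homotopy from a trivially solvable problem (e.g.\ $f\equiv0$, $\phi\equiv0$) to \eqref{eq:eigenlevelset1} must preserve the coercivity \eqref{condition:c} of $c$ throughout, since otherwise the $C^1$ estimate of the previous step is lost along the path. This is arranged by holding $c$ fixed and interpolating only the transport term $f$ and the boundary datum $\phi$ (scaling both by a parameter $s\in[0,1]$); the corresponding family $\widetilde{f}_{k,s}=sf+ku_s$ remains uniformly bounded because the $C^0$ estimate is inherited at every step. Combined with Schauder estimates for the linearized operator, Leray--Schauder yields $u\in C^{2,\alpha}(\overline{\Omega})$, and classical bootstrapping upgrades this to $u\in C^{\infty}(\overline{\Omega})$. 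Uniqueness follows from comparing two solutions $u_1,u_2$: at a maximizer of $u_1-u_2$, the strict monotonicity coming from the $-ku$ term together with the standard ellipticity argument at the interior point, or Hopf at the boundary with the matching Neumann trace, forces $u_1\equiv u_2$. The main obstacle I anticipate is the boundary case of the $C^0$ estimate, where the Neumann datum $\phi v^{1-q}$ is not \emph{a priori} bounded for $q<1$; this requires the barrier argument to be coupled carefully with the Hopf lemma to avoid circularity with the pending $C^1$ bound.
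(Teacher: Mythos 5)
Your overall plan (uniform $C^0$ and $C^1$ estimates, then Leray--Schauder with the coercivity of $c$ preserved along the homotopy, then bootstrap) has the right shape and correctly identifies the paper's structural insight: during the interpolation, $c$ must be held fixed so that \eqref{condition:c} survives and the Bernstein gradient bound of Proposition \ref{prop:exuniquegra} applies at every stage, with the $k$-dependence absorbed into the transport term via $\widetilde{f}(x,z)=\cdots+kz$. Your interior $C^0$ argument and the transplantation of the Bernstein estimate to the stationary setting are also sound. However, there are two genuine gaps.

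First, the boundary case of the $C^0$ estimate is not resolved, and you correctly sense the circularity but do not avoid it. Your barrier $\epsilon\Psi$ would need $\epsilon$ to dominate the Neumann datum $\phi v^{1-q}$, which depends on the still-unbounded $|Du|$ whenever $q<1$. The paper bypasses this entirely: it fixes a function $g$ satisfying $(\sqrt{\eta^2+|Dg|^2})^{q-1}\frac{\partial g}{\partial\vec{\mathbf{n}}}>\sup_{\overline\Omega}|\phi|$ on $\partial\Omega$, a condition that involves only $g$ and $\phi$. If $g-u$ had a boundary minimizer $x_0$, then $\frac{\partial g}{\partial\vec{\mathbf{n}}}\leq\frac{\partial u}{\partial\vec{\mathbf{n}}}$ and $D'g=D'u$ there; since $b\mapsto(\sqrt{\eta^2+a^2+b^2})^{q-1}b$ is strictly increasing in $b$ for the relevant ranges of $(q,\eta)$, this forces $(\sqrt{\eta^2+|Dg|^2})^{q-1}\frac{\partial g}{\partial\vec{\mathbf{n}}}\leq\phi(x_0)$, a contradiction. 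No Hopf lemma, no $\epsilon\to0$ limit, and no pending $C^1$ bound are needed. You need this monotonicity observation; without it the $C^0$ step is incomplete.

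Second, your proposed homotopy lacks a solvable endpoint. At $s=0$ you set $f\equiv0$, $\phi\equiv0$, so the equation reads $-\textrm{tr}\{a(Du)D^2u\}-c\sqrt{\eta^2+|Du|^2}+ku=0$ with homogeneous Neumann data; plugging in $u\equiv0$ leaves $-c\eta$, which is nonzero precisely because \eqref{condition:c} forces $c$ away from zero. So $u\equiv0$ is not a solution at the start of your path, and the Leray--Schauder scheme has no anchor. The paper repairs this by interpolating the transport term as $\tau f(x)+(1-\tau)\eta c(x)$ (see \eqref{eq:eigenlevelsetfamily}), so at $\tau=0$ the constant $\eta c$ cancels $c\eta$ and $u\equiv0$ solves the problem; moreover this modified transport term still satisfies \eqref{assumtion:f} with bounds uniform in $\tau,k,\eta$, so the $C^0$ and $C^1$ estimates carry through along the whole homotopy. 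You need this auxiliary term (or an equivalent device) for the existence step to close.
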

\begin{proof}
We apply Leray-Schauder fixed point theorem to the following family of boundary value problems, parametrized by $\tau\in[0,1]$,
\begin{equation}\label{eq:eigenlevelsetfamily}
\begin{cases}
\tau\left(-\textrm{tr}\{a(Du)D^2u\}-c(x)\sqrt{\eta^2+|Du|^2}+f(x)+ku\right)\\
\quad+(1-\tau)\left(-\textrm{tr}\{a(Du)D^2u\}-c(x)\sqrt{\eta^2+|Du|^2}+\eta c(x)+ku\right)=0 \quad &\text{ in } \Omega,\\
\displaystyle \frac{\partial u}{\partial \Vec{\mathbf{n}}}=\tau\phi(x)v^{1-q}\quad &\text{ on } \partial\Omega,
\end{cases}
\end{equation}
where $a(p):=I_n-\frac{p\otimes p}{\eta^2+|p|^2}$ for $p\in\mathbb{R}^n$. When $\tau=0$, $u\equiv0$ is a solution, and we need to find a solution when $\tau=1$. By Leray-Schauder fixed point theorem, the existence of a solution $u$ when $\tau=1$ can be shown by establishing \emph{a priori} $C^0$ and $C^1$ estimates, uniform in $\tau\in[0,1]$,
$$
\sup_{\overline{\Omega}}|ku|+\sup_{\overline{\Omega}}|Du|\leq R,
$$
which is also uniform in $k\in(0,1)$, and also in $\eta\in(0,1]$ when $q=1$.

%We will show in this proof that the bound is uniform in $k>0$ when $\tau=1$. This is uniform only if $\tau=1$, but this is enough for the conclusion. See \cite{LT} for the background of applying Leray-Schauder fixed point theorem to oblique boundary value problems.

Let $u\in C^2(\overline{\Omega})\cap C^3(\Omega)$ be a solution of \eqref{eq:eigenlevelsetfamily}. We first get \emph{a priori} $C^0$ estimate, as it is used to obtain \emph{a priori} $C^1$ estimate. A $C^0$ estimate can be obtained as before. Consider a smooth function $g$ on $\overline{\Omega}$ that has a so large positive slope in the outward normal direction on the boundary that $\left(\sqrt{\eta^2+|Dg|^2}\right)^{q-1}\frac{\partial g}{\partial \Vec{\mathbf{n}}}>\sup_{\overline{\Omega}}|\phi|$ on $\partial\Omega$. Note again that we are dealing with $\eta=1,\ q>0$ for the graph case and with $\eta\in(0,1],\ q=1$ for the level-set case.

We prove \emph{a priori} $C^0$ estimate, and we first check that $g-u$ attains a minimum inside $\Omega$ for \emph{a priori} $C^0$ estimate. Suppose not, and say $x_0\in\partial\Omega$ is a minimizer of $g-u$. Then, at $x_0\in\partial\Omega$, we have $0<\frac{\partial g}{\partial \Vec{\mathbf{n}}}\leq\frac{\partial u}{\partial \Vec{\mathbf{n}}}$ and $D'g=D'u$. The latter follows from $\nabla'g=\nabla'u$ at $x_0$ and Lemma \ref{lem:boundary}, in the notations introduced in Step 5 of the proof of Proposition \ref{prop:exuniquegra}. Using the fact that for a fixed $a\in\mathbb{R}$, the function $\left(\sqrt{\eta^2+a^2+b^2}\right)^{q-1}b$ is monotonically increasing in $b>0$ when $\eta=1,\ q>0$ and also when $\eta\in(0,1],\ q=1$, we see that
$$
\left(\sqrt{\eta^2+|Dg|^2}\right)^{q-1}\frac{\partial g}{\partial \Vec{\mathbf{n}}}\leq\left(\sqrt{\eta^2+|Du|^2}\right)^{q-1}\frac{\partial u}{\partial \Vec{\mathbf{n}}}=\phi(x_0)
$$
at $x_0\in\partial\Omega$. This contradicts with the choice of a function $g$.

Let $x_0\in\Omega$ be a minimizer of $g-u$. Applying the maximum principle at $x_0$ to $g-u$, i.e., $Dg(x_0)=Du(x_0),\ D^2g(x_0)\geq D^2u(x_0)$, we see that, at $x_0$,
\begin{align*}
C\geq \textrm{tr}\{a(Dg)D^2g\}&\geq \textrm{tr}\{a(Du)D^2u\}\\
&=ku-c\sqrt{\eta^2+|Du|^2}+\tau f+(1-\tau)\eta c\\
&=ku-c\sqrt{\eta^2+|Dg|^2}+\tau f+(1-\tau)\eta c\\
&\geq ku-C,
\end{align*}
for some constant $C>0$ depending only on $\Omega,\ g,\ f,\ c$. Here, we have used the fact that $\tau,\eta\in[0,1]$ and the assumptions \eqref{assumtion:c}, \eqref{assumtion:f}. Therefore, for all $x\in\overline{\Omega}$,
$$
ku(x)\leq kg(x)-kg(x_0)+ku(x_0)\leq R
$$
for some constant $R>1$ uniform in $\tau\in[0,1],\ k\in(0,1)$, and also in $\eta\in(0,1]$ when $q=1$. Similarly, we can get a lower bound of $ku(x)$.

A $C^1$ estimate can be established similarly as in the proof of Proposition \ref{prop:exuniquegra}, but now with $\widetilde{c}(x,z):=c(x)$, $\widetilde{f}(x,z):=\tau f(x)+(1-\tau)\eta c(x)+kz$ and $\widetilde{\phi}(x):=\tau\phi(x)$ for $x\in\overline{\Omega},\ z\in\mathbb{R}$. Equation \eqref{eq:eigenlevelsetfamily} can be written as
\begin{equation}\label{eq:eigenlevelsetfamilysimpler}
\begin{cases}
\textrm{tr}\{a(Du)D^2u\}+\widetilde{c}(x,u)v-\widetilde{f}(x,u)=0\quad &\text{ in } \Omega,\\
\displaystyle \frac{\partial u}{\partial \Vec{\mathbf{n}}}=\widetilde{\phi}(x)v^{1-q}\quad &\text{ on } \partial\Omega.
\end{cases}
\end{equation}
The force $\widetilde{c}(x,z)=c(x)$ is in $C^{1,\alpha}(\overline{\Omega})$ and satisfies \eqref{assumtion:c}, \eqref{condition:c}. Also, $\widetilde{\phi}(x)$ is in $C^3(\overline{\Omega})$ with a $C^3$ norm uniform in $\tau\in[0,1]$. Moreover, $\widetilde{f}(x,u)=\tau f(x)+(1-\tau)\eta c(x)+ku$ is \emph{a priori} in $C^{1,\alpha}(\overline{\Omega}\times\mathbb{R})$ and \emph{a priori} satisfies \eqref{assumtion:f} with a constant $C>0$ independent of $\tau\in[0,1],\ k\in(0,1)$ and of $\eta\in(0,1]$ when $q=1$.

We now prove a \emph{a priori} $C^1$ estimate. Throughout the remaining part of the proof, $R>1,\ C>0$ denote constants, which may vary from line to line, independent of $\tau\in[0,1],\ k\in(0,1)$ and also of $\eta\in(0,1]$ when $q=1$. Let $h$ be a function in $C^3(\overline{\Omega})$ such that $h\equiv C,\ Dh=\Vec{\mathbf{n}}$ on the boundary $\partial\Omega$ for some constant $C$. Let $v=\sqrt{\eta^2+|Du|^2}$ and let $w=v^{q+1}-(q+1)\widetilde{\phi} Du\cdot Dh$ on $\overline{\Omega}$.

The proof is similar to that of Proposition \ref{prop:exuniquegra}. We use the idea and the estimate from the proof of Proposition \ref{prop:exuniquegra}, and we highlight the difference coming from not having the time derivative involved.

Let $x_0\in\argmax_{\overline{\Omega}}w$. The goal is to show that $v(x_0)\leq R$ for some constant $R>1$ independent of $\tau\in[0,1],\ k\in(0,1)$ and of $\eta\in(0,1]$ when $q=1$. We again divide the proof into two cases when $x_0\in\Omega$ and when $x_0\in\partial\Omega$.

\medskip

\noindent {\bf Case 1: $x_0\in\Omega$.}

\medskip

At $x_0$, we apply the maximum principle to $w$ to obtain
$$
0\geq\frac{1}{q+1}\textrm{tr}\{a(Du)D^2w\},
$$
which leads to
$$
0\geq\textrm{tr}\{a(Du)D(v^qDv)\}-\textrm{tr}\{a(Du)D^2(\widetilde{\phi}Du\cdot Dh)\}
$$
at $x_0$. Write $0=G+\widetilde{c}v-\widetilde{f}$, where $G:=\textrm{tr}\{a(Du)D^2u\}$. Then,
$$
(v^{q-1}Du-\widetilde{\phi}Dh)\cdot(DG+D(\widetilde{c}v-\widetilde{f}))=0,
$$
and thus, we have \eqref{maxprincipleexpandedr} at $x_0$ with $\widetilde{c},\widetilde{f},\widetilde{\phi}$ instead of $c,f,\phi$.

We proceed the same estimate as in Case 1 of the proof of Proposition \ref{prop:exuniquegra}, except for the part we remark here that with $\alpha=\sqrt{a} D^2u$, $\beta=\sqrt{a}$,
\begin{align*}
\textrm{tr}\{a(Du)(D^2u)^2\}=\|\alpha\|^2&\geq\frac{\textrm{tr}\{\alpha\beta^{\textrm{Tr}}\}^2}{\|\beta\|^2}=\frac{G^2}{n-1+\frac{\eta^2}{v^2}}\nonumber\\
&=\left(\frac{1}{n-1}-\frac{\eta^2}{v^2(n-1)\left(n-1+\frac{\eta^2}{v^2}\right)}\right)(\widetilde{c}v-\widetilde{f})^2\\
&\geq\frac{1}{n-1}\widetilde{c}^2v^2-Cv
%&\geq\frac{1}{n-1}c^2v^2-\frac{1}{v^2}(\|u_t\|_{C^0(\overline{\Omega}\times[0,T])}+\|c\|_{C^0(\overline{\Omega}\times\mathbb{R})}v+\|f\|_{C^0(\overline{\Omega}\times\mathbb{R})})^2\nonumber\\
%&-\frac{1}{n-1}\left(\|u_t\|^2_{C^0(\overline{\Omega}\times[0,T])}+\|f\|^2_{C^0(\overline{\Omega}\times\mathbb{R})}+2\|u_t\|_{C^0(\overline{\Omega}\times[0,T])}\|c\|_{C^0(\overline{\Omega}\times\mathbb{R})}v\right.\nonumber\\
%&\qquad\left.+2\|c\|_{C^0(\overline{\Omega}\times\mathbb{R})}\|f\|_{C^0(\overline{\Omega}\times\mathbb{R})}v+2\|u_t\|_{C^0(\overline{\Omega}\times[0,T])}\|f\|_{C^0(\overline{\Omega}\times\mathbb{R})}\right)\nonumber
\end{align*}
for some constant $C>0$ depending only on $\|f\|_{C^0(\overline{\Omega}\times\mathbb{R})},\|c\|_{C^0(\overline{\Omega}\times\mathbb{R})}$. We have used \emph{a priori} $C^0$ estimate, the assumptions \eqref{assumtion:c}, \eqref{assumtion:f} and the fact that $\tau\in[0,1]$, $\eta\in(0,1]$ when $q=1$.
Therefore, there exists a constant $R>1$ independent of $\tau\in[0,1],\ k\in(0,1)$ and of $\eta\in(0,1]$ when $q=1$ such that whenever $v>R$, \eqref{term1J1} holds.

Also, we have \eqref{term3J1}, \eqref{secondderivative1}, \eqref{secondderivative21} at $x_0$ for some constant $C>0$ independent of $\tau\in[0,1],\ k\in(0,1)$ and of $\eta\in(0,1]$ when $q=1$, since $Dw=0$ at $x_0$. Following the same argument in Case 1 of the proof of Proposition \ref{prop:exuniquegra}, i.e., as in \eqref{J1r}, \eqref{J2r}, we see that for $\varepsilon\in(0,1)$ there exist constants $R>1,\ C>0$ independent of $\tau\in[0,1],\ k\in(0,1)$ and of $\eta\in(0,1]$ when $q=1$ such that
$$
0\geq\frac{\delta}{2}v^{q+1}-C(v+v^q)
$$
at $x_0$ if $v>R_{\varepsilon}$. As there is a constant $R_0>R$ independent of $\tau\in[0,1],\ k\in(0,1)$ and not of $\eta\in(0,1]$ when $q=1$ such that
$$
0<\frac{\delta}{2}v^{q+1}-C(v+v^q)
$$
if $v>R_0$, it must hold that $v=v(x_0)\leq R_0$, which finishes Case 1.

\medskip

\noindent {\bf Case 2: $x_0\in\partial\Omega$.}

\medskip

We see that Step 5 of the proof of Proposition \ref{prop:exuniquegra} carries over verbatim, since the time $t=t_0$ is fixed throughout the step, and since $x_0$ is a maximizer of $w$ on $\overline{\Omega}$. Therefore, for each $\varepsilon_0\in(0,1)$, there exists $R_{\varepsilon_0}>1$ that may depend on $\varepsilon_0$ but not on $\tau\in[0,1],\ k\in(0,1)$ and of $\eta\in(0,1]$ when $q=1$ such that $w>0$ and
$$
\frac{\partial w}{\partial \Vec{\mathbf{n}}}<Lw
$$
at $x_0$ for $v>R_{\varepsilon_0}$, where $L:=(q+1)(C_0+\varepsilon_0)$. We also see that if $C_0<0$, then $v(x_0)\leq R$ for some constant $R>1$ independent of $\tau\in[0,1],\ k\in(0,1)$ and of $\eta\in(0,1]$ when $q=1$, by the argument at the end of Step 5 of the proof of Proposition \ref{prop:exuniquegra}, and thus, we achieve the goal in this case. Therefore, we assume that $C_0\geq0$, and thus that $L\geq0$.

Let $B=B(x_c,K_0)$ be the open ball with the center $x_c:=x_0-K_0\Vec{\mathbf{n}}(x_0)$ so that $B\subseteq\Omega$ and $\overline{B}\cap(\mathbb{R}^n\setminus\Omega)=\{x_0\}$. Let $\psi:=\rho w$, with
$$
\rho(x):=-\frac{L}{2K_0}|x-x_c|^2+\frac{LK_0}{2}+1
$$
as before. Then, $\rho(x_0)=1$, $\frac{\partial \rho}{\partial \Vec{\mathbf{n}}}(x_0)=-L$, and thus,
$$
\frac{\partial \psi}{\partial \Vec{\mathbf{n}}}=\rho\frac{\partial w}{\partial \Vec{\mathbf{n}}}+w\frac{\partial \rho}{\partial \Vec{\mathbf{n}}}=\frac{\partial w}{\partial \Vec{\mathbf{n}}}+(-L)w<0,\qquad\text{at }x_0.
$$
Since $\rho(z)w(z)\leq\rho(x_0)w(x_0)$ for all $z\in\partial B$ from $\rho\equiv1$ on $\partial B$, and since $\frac{\partial \psi}{\partial \Vec{\mathbf{n}}}(x_0)<0$, we derive that $x_1\in B$ for $x_1\in\argmax_{\overline{B}}\psi$. As in Step 5 of the proof of Proposition \ref{prop:exuniquegra}, we see that there exists a constant $C>0$ depending only on $\|\phi\|_{C^0(\overline{\Omega})},\ \|h\|_{C^1(\overline{\Omega})}$ such that the condition $v(x_0)>R_{\varepsilon_0}$ with $R_{\varepsilon_0}>(8C)^{\frac{1}{q+1}}$ implies the condition $v(x_1)>\left(\frac{1}{4C}\right)^{\frac{1}{q+1}}R_{\varepsilon_0}=:R'_{\varepsilon_0}$. Writing $R'_{\varepsilon_0}=\left(\frac{1}{4C}\right)^{\frac{1}{q+1}}R_{\varepsilon_0},$ $R_{\varepsilon_0}=\left(4C\right)^{\frac{1}{q+1}}R'_{\varepsilon_0}$ (and also for $R,\ R'$ similarly), we can state equivalently that if $v(x_1)\leq R'_{\varepsilon_0}$, then $v(x_0)\leq \max\left\{R_{\varepsilon_0},(8C)^{\frac{1}{q+1}}\right\}$. Accordingly, we change our goal from verifying $v(x_0)\leq R$ to proving $v(x_1)\leq R'$.

Fix $x_1\in\argmax_{\overline{B}}\psi\cap B$. At $x_1$,
\begin{align*}
0&\geq\frac{1}{(q+1)\rho}\textrm{tr}\{a(Du)D^2\psi\}\\
&=\frac{w}{(q+1)\rho}\textrm{tr}\{a(Du)D^2\rho\}+\frac{2}{(q+1)\rho}\textrm{tr}\{a(Du)Dw\otimes D\rho\}+\frac{1}{q+1}\textrm{tr}\{a(Du)D^2w\},
\end{align*}
Since $D\psi=\rho Dw+wD\rho=0$ at $x_1$, we have \eqref{Vexpansion} at $x_1$. Also, since
$$
(v^{q-1}Du-\widetilde{\phi}Dh)\cdot(DG+D(\widetilde{c}v-\widetilde{f}))=0,
$$
there exist a constant $R'_{\varepsilon_0}>1$ that may depend on $\varepsilon_0\in(0,1)$ but not on $\tau\in[0,1],\ k\in(0,1)$ and not on $\eta\in(0,1]$ when $q=1$ and a constant $C>0$ independent of $\tau\in[0,1],\ k\in(0,1)$ and of $\eta\in(0,1]$ when $q=1$ such that \eqref{maxprinciplerhofinalr} holds true at $x_1$ for $v>R'_{\varepsilon_0}$. Following the same computations in Step 7 of the proof of Proposition \ref{prop:exuniquegra}, we see that there exist a constant $R'_{\varepsilon_0}>1$ that may depend on $\varepsilon_0\in(0,1)$ but not on $\tau\in[0,1],\ k\in(0,1)$ and not on $\eta\in(0,1]$ when $q=1$ and a constant $C>0$ independent of $\tau\in[0,1],\ k\in(0,1)$ and of $\eta\in(0,1]$ when $q=1$ such that \eqref{term1rho}, \eqref{term2rho}, \eqref{J1'}, \eqref{secondderivative1rho}, \eqref{secondderivative21rho}, \eqref{J2'r} at $x_1$ for $v>R'_{\varepsilon_0}$.

All in all, choosing $\varepsilon_0\in(0,1)$ as in Step 8 of the proof of Proposition \ref{prop:exuniquegra}, we see that there exist constants $R'>1,\ C>0$ independent of $\tau\in[0,1],\ k\in(0,1)$ and of $\eta\in(0,1]$ when $q=1$ such that
$$
0\geq\frac{\delta}{2}v^{q+1}-C(v+v^q)
$$
at $x_1$ if $v>R'$. There is, on the other hand, also a constant $R'_0>R'$ independent of $\tau\in[0,1],\ k\in(0,1)$ and of $\eta\in(0,1]$ when $q=1$ such that
$$
0<\frac{\delta}{2}v^{q+1}-C(v+v^q)
$$
if $v>R'_0$. Therefore, it must hold that $v=v(x_1)\leq R'_0$, which completes Case 2.

\medskip

All in all, we have obtained \emph{a priori} $C^0$ and $C^1$ estimates for $u\in C^2(\overline{\Omega})\cap C^3(\Omega)$ solving \eqref{eq:eigenlevelset1}, and thus the existence of a solution $u\in C^{\infty}(\overline{\Omega})$ of \eqref{eq:eigenlevelset1} by Leray-Schauder fixed point theorem. The higher regularity and the fixed point theorem are referred to \cite{LT}.

\medskip

For the rest of the proof, we refer to the proof of \cite[Theorem 4.2]{WWX} for more details and the uniqueness upto an additive constant.
\end{proof}

Take $\eta=1,\ q>0$ to prove Theorem \ref{thm:eigengraph} and Theorem \ref{thm:asympgraph}.

\begin{proof}[Proof of Theorem \ref{thm:eigengraph} and Theorem \ref{thm:asympgraph}]
For each $k\in(0,1)$, let $u_k$ be the solution of \eqref{eq:eigen} with $\eta=1,\ q>0$. Then the function $w_k=u_k-\frac{\int_{\Omega}u_k}{|\Omega|}$ solves
\begin{equation}\label{eq:eigengraph3}
\begin{cases}
-a(Dw_k):D^2w_k-c(x)\sqrt{\eta^2+|D w_k|^2}+f(x)=-kw_k-k\frac{\int_{\Omega}u_k}{|\Omega|} \quad &\text{ in } \Omega,\\
\displaystyle \frac{\partial w_k}{\partial \Vec{\mathbf{n}}}=\phi(x)\left(\sqrt{1+|Dw_k|^2}\right)^{1-q}\quad &\text{ on } \partial\Omega.
\end{cases}
\end{equation}
Then we have that $\sup|w_k|+\sup|Dw_k|\leq R$. By Schauder theory, there is an exponent $\alpha\in(0,1)$ such that $\|w_{k}\|_{C^{2,\alpha}(\overline{\Omega})}\leq R$. Therefore, $w_k\to w$ in $C^{2,\alpha'}$ for some $\alpha'\in(0,\alpha)$, and $-kw_k-k\frac{\int_{\Omega}u_k}{|\Omega|}\to-\lambda$ where $(\lambda,w)$ solves \eqref{eq:eigengraph}.

See the proof of \cite[Theorem 4.2]{WWX} for more details and the uniqueness upto an additive constant. The proof of Theorem \ref{thm:asympgraph} goes the same as that of \cite[Theorem 5.1]{WWX}.
\end{proof}

Now, we study \eqref{eq:eigenlevelset} by vanishing viscosity procedure $\eta\to0$ when $q=1$.

\begin{proposition}
Let $\Omega$ be a $C^{\infty}$ bounded domain in $\mathbb{R}^n,\ n\geq2$. Let $\eta\in(0,1]$. Assume $c\in C^{\infty}(\overline{\Omega})$ satisfies \eqref{condition:c}. Then, there exists a unique $\lambda_{\eta}\in\mathbb{R}$ such that there exists a solution $w\in C^{\infty}(\overline{\Omega})$ of
\begin{equation}\label{eq:eigenlevelset2}
\begin{cases}
-\sum_{i,j=1}^n\left(\delta^{ij}-\frac{w_iw_j}{\eta^2+|Dw|^2}\right)w_{ij}-c\sqrt{\eta^2+|Dw|^2}+f=-\lambda_{\eta} \quad &\text{ in } \Omega,\\
\displaystyle \frac{\partial w}{\partial \Vec{\mathbf{n}}}=\phi(x)\quad &\text{ on } \partial\Omega.
\end{cases}
\end{equation}
Moreover, a solution $w$ is unique upto an additive constant, and we have the following estimate uniform in $\eta\in(0,1]$;
\begin{align}\label{uniformineta}
|\lambda_{\eta}|+\sup_{\overline{\Omega}}|Dw|\leq R,
\end{align}
where $R>0$ is a constant not depending on $\eta\in(0,1]$.
\end{proposition}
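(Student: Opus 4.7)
The plan is to follow the same $k\to 0$ procedure used in the proof of Theorem \ref{thm:eigengraph}, but with $\eta\in(0,1]$ kept fixed during the passage $k\to 0$, and then to verify that all the pieces of the resulting estimate that need to be uniform in $\eta$ actually are. Concretely, for fixed $\eta\in(0,1]$ and each $k\in(0,1)$, Proposition \ref{prop:C0C1estimate} (applied with $q=1$) furnishes a unique smooth solution $u_{k,\eta}\in C^{\infty}(\overline{\Omega})$ of \eqref{eq:eigenlevelset1}, together with the bound
\[
\sup_{\overline{\Omega}}|ku_{k,\eta}|+\sup_{\overline{\Omega}}|Du_{k,\eta}|\leq R,
\]
where $R>0$ is independent of both $k\in(0,1)$ and $\eta\in(0,1]$. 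Set $w_{k,\eta}:=u_{k,\eta}-|\Omega|^{-1}\int_{\Omega}u_{k,\eta}\,dx$, so that $w_{k,\eta}$ has zero mean and solves the same PDE with right-hand side $-kw_{k,\eta}-k|\Omega|^{-1}\int_{\Omega}u_{k,\eta}\,dx$. The gradient bound transfers to $w_{k,\eta}$, and since $w_{k,\eta}$ has zero mean and bounded gradient on a bounded domain, we also get a uniform (in $k$ and $\eta$) $C^{0}$ bound on $w_{k,\eta}$.

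Next I would send $k\to 0$ with $\eta\in(0,1]$ fixed. For this fixed $\eta$, the equation is uniformly elliptic with smooth coefficients and the boundary condition is a smooth oblique derivative condition, so classical Schauder estimates (see \cite{L2,LT}) applied to $w_{k,\eta}$ yield an $\eta$-dependent bound $\|w_{k,\eta}\|_{C^{2,\alpha}(\overline{\Omega})}\leq R_{\eta}$ for some $\alpha\in(0,1)$. Extracting a subsequence $k_j\to 0$, we obtain $w_{k_j,\eta}\to w^{\eta}$ in $C^{2,\alpha'}(\overline{\Omega})$ for some $\alpha'\in(0,\alpha)$, while the scalar sequence $-k_jw_{k_j,\eta}-k_j|\Omega|^{-1}\int_{\Omega}u_{k_j,\eta}\,dx$, which is uniformly bounded by the estimate above, converges (along a further subsequence) to some $-\lambda_{\eta}\in\mathbb{R}$ uniformly in $x\in\overline{\Omega}$ because $kw_{k,\eta}\to 0$ uniformly. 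Passing to the limit in the equation shows that the pair $(\lambda_{\eta},w^{\eta})$ solves \eqref{eq:eigenlevelset2}, and higher regularity in the smooth setting upgrades $w^{\eta}$ to $C^{\infty}(\overline{\Omega})$. The uniform estimate \eqref{uniformineta} is then immediate: the bound on $|\lambda_{\eta}|$ follows from the uniform bound on $|ku_{k,\eta}|$ (which was uniform in both $k$ and $\eta$), and the bound on $\sup_{\overline{\Omega}}|Dw^{\eta}|$ follows from the uniform-in-$\eta$ gradient estimate on $w_{k,\eta}$ by lower semicontinuity of the gradient sup under $C^{2,\alpha'}$ convergence.

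For the uniqueness of $\lambda_{\eta}$ and of $w^{\eta}$ up to an additive constant, I would argue exactly as in the proof of \cite[Theorem 4.2]{WWX}: if $(\lambda_1,w_1)$ and $(\lambda_2,w_2)$ are two solution pairs with $\lambda_1>\lambda_2$, then comparing $w_1+C$ and $w_2$ for $C$ chosen so the difference touches from above and invoking the strong maximum principle and the Hopf lemma for the (uniformly elliptic for fixed $\eta>0$) linearized operator forces a contradiction, giving $\lambda_1=\lambda_2$; with equal eigenvalues, the same strong maximum principle / Hopf argument applied to $w_1-w_2$ forces it to be constant.

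The main obstacle I anticipate is ensuring that the $\eta$-dependence in the Schauder step does not contaminate the estimate \eqref{uniformineta}. The $C^{2,\alpha}$ bound on $w_{k,\eta}$ genuinely degenerates as $\eta\to 0$, but this is used only to extract a convergent subsequence at fixed $\eta$, not to get the uniform-in-$\eta$ bound; the latter quantities, namely $|\lambda_{\eta}|$ and $\sup|Dw^{\eta}|$, are controlled purely by the $C^{0}$-bound on $ku_{k,\eta}$ and the $C^{1}$-bound on $u_{k,\eta}$, both of which Proposition \ref{prop:C0C1estimate} provides uniformly in $\eta\in(0,1]$ in the case $q=1$. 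Separating these two roles of the estimates cleanly is the only subtle point; the rest is a direct transcription of the graph-case argument.
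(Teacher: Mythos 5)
Your proposal is correct and follows essentially the same route as the paper: the paper's proof simply repeats the $k\to0$ limit process from Theorem \ref{thm:eigengraph} (mean-normalization, Schauder compactness at fixed $\eta$, passage to the limit, uniqueness via \cite[Theorem 4.2]{WWX}), relying on the fact that the $C^0$ and $C^1$ bounds of Proposition \ref{prop:C0C1estimate} are uniform in $\eta\in(0,1]$ when $q=1$. Your explicit separation of the $\eta$-dependent Schauder bound (used only for compactness) from the $\eta$-uniform bounds (used for \eqref{uniformineta}) is precisely the point the paper records in its one-line remark.
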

\begin{proof}
We proceed the same limit process as $k\to0$ as in the proof of Theorem \ref{thm:eigengraph}. Note that the estimates are uniform in $\eta\in(0,1]$ when $q=1$.
\end{proof}

\begin{comment}

\begin{proposition}
Let $\Omega$ be a $C^{\infty}$ bounded domain in $\mathbb{R}^n,\ n\geq2$. Let $\eta\in(0,1]$. Assume $c\in C^{\infty}(\overline{\Omega})$ satisfies \eqref{condition:c}. Then, there exists a unique $\lambda\in\mathbb{R}$ such that there exists a Lipschitz viscosity solution $v$ of
\begin{equation}\label{eq:eigenlevelset3}
\begin{cases}
-\sum_{i,j=1}^n(\delta_{ij}-\frac{v_iv_j}{|Dv|^2})v_{ij}-c|Dv|+f=\lambda \quad &\text{ in } \Omega,\\
\displaystyle \frac{\partial v}{\partial \Vec{\mathbf{n}}}=\phi(x)\quad &\text{ on } \partial\Omega.
\end{cases}
\end{equation}
Moreover, $\lambda=\lim_{t\to\infty}\frac{u(x,t)}{t}$ and the convergence is uniform in $x\in\overline{\Omega}$, where $u$ is the unique viscosity solution of \eqref{eq:levelset}.
\end{proposition}

\end{comment}

\begin{proof}[Proof of Theorem \ref{thm:eigenlevelset}]
Fix $x_0\in\Omega$. For each $\eta\in(0,1]$, let $(\lambda_{\eta},w_{\eta})$ be a pair that solves \eqref{eq:eigenlevelset2} with $w_{\eta}(x_0)=0$. By \eqref{uniformineta} and Arzela-Ascoli Theorem, as $\eta\to0$, we can find a subsequence of $(\lambda_{\eta},w_{\eta})$ such that $\lambda_{\eta}$ converges to $\lambda\in\mathbb{R}$, and $w_{\eta}$ converges to a Lipschitz function $w$ uniformly on $\overline{\Omega}$. By the stability of viscosity solutions, we see that $(\lambda,w)$ solves \eqref{eq:eigenlevelset}.

Let $u$ be the unique viscosity solution of \eqref{eq:levelset}. Then for some constant $C>0$, $w(x)-C+\lambda t$ and $w(x)+C+\lambda t$ are a subsolution and supersolution of \eqref{eq:levelset}, respectively. By the comparison principle (Proposition \ref{prop:comp}) for \eqref{eq:levelset}, we have
$$
w(x)-C+\lambda t\leq u(x,t)\leq w(x)+C+\lambda t.
$$
Therefore, we can draw the conclusion that $\lambda=\lim_{t\to\infty}\frac{u(x,t)}{t}$ and that the convergence is uniform in $x\in\overline{\Omega}$. The uniqueness of such a number $\lambda\in\mathbb{R}$ follows from the uniqueness of a solution $u$ of \eqref{eq:levelset} and the limit $\lambda=\lim_{t\to\infty}\frac{u(x,t)}{t}$.
\end{proof}

\section{Radially symmetric cases}\label{sec:radial}

In this section, we study the radially symmetric setting of \eqref{eq:levelset}. We find the Lagrangian, the optimal control formula and a counterexample of the condition \eqref{condition:c} in Subsection \ref{subsec:4.1}, and we define the \emph{Aubry set}, prove the comparison principle on the Aubry set and prove Theorem \ref{thm:radprofile} in Subsection \ref{subsec:4.2}. We mention an example of nonuniqueness for \eqref{eq:levelset} when $0<q<1$ at the end of this section. We leave the reference \cite{GMT, JKMT} for the analysis of the radially symmetric setting, and \cite{Hung} for Aubry sets.

We always assume here that, by abuse of notations,
\begin{equation}\label{con:radial}
\begin{cases}
\hspace{6.25mm}\Omega = B(0,R) &\text{ for some } R>0,\\
\hspace{2mm}c(x) = c(r) &\text{ for } |x|=r \in [0, R],\\
\hspace{1.25mm}f(x) = f(r) &\text{ for } |x|=r \in [0, R],\\
\hspace{1.5mm}\phi(x) = \phi(r) &\text{ for } |x|=r \in [0, R],\\
u_0(x) = u_0(r)  &\text{ for } |x|=r \in [0, R].
\end{cases}
\end{equation}
Here, $R>0$ is a fixed positive number, $c \in C^1([0,R], [0,\infty))$, $f\in C^1([0,R])$ and $u_0 \in C^2([0,R])$ with $u_0'(R)=\phi(R)$ are given. The function $\phi(x)$ can be understood as the constant $\phi(R)$.

\subsection{The optimal control formula and a counterexample}\label{subsec:4.1}

Equation \eqref{eq:levelset} becomes 
\begin{equation}\label{eq:radial}
\begin{cases}
\varphi_t-\frac{n-1}{r}\varphi_r-c(r)\lvert \varphi_r\rvert+f(r)=0 \quad &\text{ in } (0,R)\times(0,\infty),\\
\displaystyle \hspace{41mm} \varphi_r(R)=\phi(R)\quad &\text{} \\
\hspace{40mm}\varphi(r,0)=u_0(r) \quad &\text{ for } r\in[0,R].
\end{cases}
\end{equation}
%The existence of the solution $\phi(r,t)$ and its limiting profie $\phi_{\infty}(r)=\lim_{t\to\infty}\phi(r,t)$ is known; see \cite[Theorem 1.1, Theorem 1.3]{I}.

Note that this is a first-order Hamilton-Jacobi equation with a concave Hamiltonian.
The associated Lagrangian $L=L(r,q)$ to the Hamiltonian $H(r,p)=-\frac{n-1}{r}p-c(r)\lvert p\rvert+f(r)$ is
\begin{align*}
L(r,q)&=\inf_{p\in\mathbb{R}}\left\{p\cdot q-\left(-\frac{n-1}{r}p-c(r)\lvert p\rvert+f(r)\right)\right\}\\
&=\inf_{p\in\mathbb{R}}\left\{\left(q+\frac{n-1}{r}\right)p+c(r)\lvert p\rvert-f(r)\right\}\\
&=\left\{\begin{array}{ll}
-f(r),\quad\quad\quad\hspace{1.5mm} \textrm{if}\ \left\lvert q+\frac{n-1}{r}\right\rvert\leq c(r),\\
-\infty,\quad\quad\qquad\hspace{0.5mm}\textrm{otherwise}.
\end{array}\right.
\end{align*}
Therefore, we have the following representation formula for $\phi=\phi(r,t)$
\begin{equation}\label{repnformula}
\varphi(r,t)=\sup\left\{\int_0^t\left(-f(\eta(s))+\phi(\eta(s))l(s)\right)ds+u_0(\eta(t)):\ (\eta,v,l)\in\mathrm{SP}(r)\right\},
\end{equation}
where we denote by $\mathrm{SP}(r)$ the Skorokhod problem. See \cite[Section 4.5]{GMT2} for the derivation of the formula.
For a given $r\in(0,R],\ v\in L^{\infty}([0,t])$, the Skorokhod problem seeks to find a solution $(\eta,l)\in \mathrm{Lip}((0,t))\times L^{\infty}((0,t))$ such that
$$
\left\{\begin{array}{ll}
\eta(0)=r,\quad\quad\hspace{1.5mm} \eta([0,t])\subset(0,R],\\
l(s)\geq0\quad\quad\quad \textrm{for almost every}\ s>0,\\
l(s)=0\quad\quad\quad \textrm{if}\ \eta(s)\neq R,\\
\left|-v(s)+\frac{n-1}{\gamma(s)}\right|\leq c(\gamma(s)),\\
v(s)=\dot{\eta}(s)+l(s)n(\eta(s)),
\end{array}\right.
$$
and the set $\mathrm{SP}(r)$ collects all the associated triples $(\eta,v,l).$
Here, $n(R)=1$ is the outward normal vector  to $(0,R)$ at $R$. 
See \cite[Theorem 4.2]{I2} for the existence of solutions of the Skorokhod problem and \cite[Theorem 5.1]{I2} for the representation formula.
See \cite{GMT} for a related problem on the large time behavior and the large time profile.

We remark that at $\eta(s)\neq R,$ we have
\begin{equation}\label{etadot1}
\frac{n-1}{\eta(s)}-c(\eta(s))\leq\dot{\eta}(s)\leq\frac{n-1}{\eta(s)}+c(\eta(s)),
\end{equation}
and at $\eta(s)=R$,
\begin{equation*}%\label{etadot}
\frac{n-1}{R}-c(R)\leq\dot{\eta}(s)+l(s)n(R)\leq\frac{n-1}{R}+c(R).
\end{equation*}
This implies that
\begin{equation}\label{etadot2}
\frac{n-1}{R}-c(R)\leq l(s)\leq\frac{n-1}{R}+c(R).
\end{equation}

We will find the eigenvalue $\lambda=\lim_{t\to\infty}\frac{\varphi(r,t)}{t}$ in terms of given functions $c$, $f$ and a constant $\phi(R)$ when the force $c$ satisfies \eqref{condition:c}. Before that, let us see that $\lim_{t\to\infty}\frac{\varphi(r,t)}{t}$ is not constant in $r\in[0,R]$ when $c$ does not satisfy \eqref{condition:c} with the following example.

\begin{example}\label{ex}
We consider a case when $c(r)$ is of the form
$$
c(r)\left\{\begin{array}{ll}
<\frac{n-1}{a},\quad\quad\quad\hspace{1mm} &0\leq r< a,\\
=\frac{n-1}{r},\quad\quad\quad\hspace{1.5mm} &a\leq r\leq b,\\
>\frac{n-1}{b},\quad\quad\quad\hspace{1.5mm} &b< r\leq R,
\end{array}\right.
$$
for some $0<a<b<R$. Let $u_0\equiv0,\ \phi(R)=0$. By \eqref{etadot1}, a curve $\eta(s)$ with $(\eta,v,l)\in\mathrm{SP}(r)$
\begin{itemize}
\item can stay still or go right when $a\leq\eta(s)\leq b$,
\item must go right when $\eta(s)<a$
\item can move both left and right when $\eta(s)>b$.
\end{itemize}
Then, by \eqref{repnformula},
$$
\lim_{t\to\infty}\frac{\varphi(r,t)}{t}=\left\{\begin{array}{ll}
\sup\{-f(s):\ s\geq a\},\quad\quad\quad\hspace{1mm} &r\leq a,\\
\sup\{-f(s):\ s\geq r\},\quad\quad\quad\hspace{1.5mm} &a\leq r\leq b,\\
\sup\{-f(s):\ s\geq b\},\quad\quad\quad\hspace{1.5mm} &r\geq b.
\end{array}\right.
$$
We see that the limit is not constant in $r\in[0,R]$ for a suitable choice of $f$. For instance, take a smooth function $f(r)$ such that
$$
f(r)\left\{\begin{array}{ll}
=1,\quad\quad\quad\hspace{1mm} &0\leq r< a,\\
\in(0,1),\quad\quad\quad\hspace{1.5mm} &a\leq r\leq b,\\
>0,\quad\quad\quad\hspace{1.5mm} &b< r\leq R.
\end{array}\right.
$$
\end{example}

In the above example, the force $c$ does not satisfy \eqref{condition:c}; at $r\in(a,b)$,
$$
\frac{1}{n-1}c(r)^2-|Dc(r)|=\frac{1}{n-1}\left(\frac{n-1}{r}\right)^2-\frac{n-1}{r^2}=0.
$$
Therefore, the condition \eqref{condition:c} is sharp.

\subsection{Aubry set, the comparison principle and the large-time behavior}\label{subsec:4.2}

From now on, we assume that $c$ is coercive, i.e., $c$ satisfies \eqref{condition:c}. Then there is at most one $r$, which we call $r_{cr}$ if  it exists, such that $c(r)=\frac{n-1}{r}$. Otherwise, there would exist two points $a<b$ where the curves $c(r)$ and $\frac{n-1}{r}$ cross. At $r=b$,
$$
\frac{1}{n-1}c(b)^2-|Dc(b)|\leq\frac{1}{n-1}\left(\frac{n-1}{b}\right)^2-\frac{n-1}{b^2}=0,
$$
since $Dc(b)\leq\frac{d}{dr}\bigr\rvert_{b}\left(\frac{n-1}{r}\right)=-\frac{n-1}{b^2}<0.$ If $c(r)<\frac{n-1}{r}$ for all $r\leq R$, we let $r_{cr}:=\infty$.

In the both cases of $r_{cr}<\infty$ and $r_{cr}=\infty$, by \eqref{repnformula} and \eqref{etadot2}, we obtain
\begin{equation*}
\lambda=\sup\left\{-f(r)+\delta(r-R)\phi(R)\left(\frac{n-1}{R}+\mathrm{sgn}(\phi(R))c(R)\right):\ r\geq r_{cr}\textrm{ or }r=R\right\},  
\end{equation*}
which is \eqref{formula:eigenvalue}.

We define the \emph{Aubry set} $\widetilde{\mathcal{A}}$ by
\begin{align*}
\widetilde{\mathcal{A}}:=\left\{r\geq r_{cr}:\ \textrm{the supremum of \eqref{formula:eigenvalue} is attained}\right\}\quad\text{if }r_{cr}<\infty.
\end{align*}
Note that if $r_{cr}<\infty$, then the function $-f(r)+\delta(r-R)\phi(R)\left(\frac{n-1}{R}+\mathrm{sgn}(\phi(R))c(R)\right)$ is upper semicontinuous on the interval $[r_{cr},R]$. Thus, $\widetilde{\mathcal{A}}$ is well-defined, and it is a nonempty closed subset of $[0,R]$. If $r_{cr}=\infty$, we let $\widetilde{\mathcal{A}}=\{R\}$.

Let
\begin{equation}\label{eq:radialeigen}
\begin{cases}
\lambda-\frac{n-1}{r}w_r-c(r)\lvert w_r\rvert+f(r)=0 \quad &\text{ in } (0,R)\times(0,\infty),\\
\displaystyle \hspace{39.5mm} w_r(R)=\phi(R)
\end{cases}
\end{equation}
be the stationary problem of \eqref{eq:radial}. Here, we are assuming that $c$ satisfies \eqref{condition:c}, and thus, the eigenvalue $\lambda$ is given as in \eqref{formula:eigenvalue}.

The propositions in \cite[Section 2]{GMT} follow for \eqref{eq:radial} with little changes. Here, we state \cite[Lemma 2.4]{GMT} and \cite[Theorem 2.5]{GMT} for problem \eqref{eq:radialeigen}.

\begin{proposition}\label{prop:compprior}
Let $w^1,w^2$ be two solutions of \eqref{eq:radialeigen}. Assume that $w^1(r_0)=w^2(r_0)$ and $w^1(M)=w^2(M)$, where $r_0:=\min\{r:r\in\widetilde{\mathcal{A}}\}$ and $M:=\max\{r:r\in\widetilde{\mathcal{A}}\}$. Then $w^1=w^2$ on $[r_{cr},r_0]\cup[M,R]$.
\end{proposition}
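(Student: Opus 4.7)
The plan is to leverage the dynamic-programming characterization of viscosity solutions of the stationary equation \eqref{eq:radialeigen} together with the topological fact that $\widetilde{\mathcal{A}}\subset[r_0,M]$, so that any trajectory from $r\in[r_{cr},r_0]$ terminating in $\widetilde{\mathcal{A}}$ must first cross $r_0$, and any such trajectory from $r\in[M,R]$ must first cross $M$. Denote by $d_\lambda(r,s)$ the stationary analogue of \eqref{formula:distance}, defined by replacing the integrand with $-f(\eta)-\phi(\eta)l-\lambda$. The same derivation as for \eqref{formula:asympprofile}, now applied at the eigenvalue level, shows that every viscosity solution $w$ of \eqref{eq:radialeigen} satisfies
$$
w(r) = \sup\{d_\lambda(r,s)+w(s):s\in\widetilde{\mathcal{A}}\}
$$
for each $r\in[0,R]$; in particular $w(r)\geq d_\lambda(r,s)+w(s)$ for every $s$, and the supremum is attained at some $s^*\in\widetilde{\mathcal{A}}$.

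First I would show that for every $r\in[r_{cr},r_0]$ and each $i\in\{1,2\}$, $w^i(r)=d_\lambda(r,r_0)+w^i(r_0)$. The lower bound $\geq$ is the general subsolution inequality. For the upper bound, pick $s^*\in\widetilde{\mathcal{A}}$ achieving the supremum in $w^i(r)$, together with a triple $(\eta,v,l)\in\mathcal{C}(0,t;r,s^*)$ that is $\varepsilon$-optimal in $d_\lambda(r,s^*)$. Since $r\leq r_0\leq s^*$, the continuity of $\eta$ produces a first time $\tau\in[0,t]$ with $\eta(\tau)=r_0$. Restricting the triple to $[0,\tau]$ and to $[\tau,t]$ respectively yields admissible paths in $\mathcal{C}(0,\tau;r,r_0)$ and $\mathcal{C}(0,t-\tau;r_0,s^*)$; splitting the running cost at $\tau$ gives
\begin{align*}
w^i(r)-\varepsilon &\leq d_\lambda(r,r_0)+d_\lambda(r_0,s^*)+w^i(s^*)\\
&\leq d_\lambda(r,r_0)+w^i(r_0),
\end{align*}
where the last step uses the subsolution inequality at $(r_0,s^*)$. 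Letting $\varepsilon\to 0$ gives the claim. Combined with the hypothesis $w^1(r_0)=w^2(r_0)$, this yields $w^1(r)=w^2(r)$ for all $r\in[r_{cr},r_0]$.

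The argument for $r\in[M,R]$ is symmetric: any near-optimal trajectory from $r\geq M$ to $s^*\in\widetilde{\mathcal{A}}\subset[r_0,M]$ must hit $M$ at some first time $\tau$; splitting the cost there and applying the subsolution inequality at $(M,s^*)$ yields $w^i(r)=d_\lambda(r,M)+w^i(M)$, and $w^1(M)=w^2(M)$ gives $w^1=w^2$ on $[M,R]$. The degenerate case $r_{cr}=\infty$ reduces to $\widetilde{\mathcal{A}}=\{R\}$, so $r_0=M=R$ and the statement is trivial from the hypothesis.

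The main obstacle is establishing cleanly the stationary representation $w^i(r)=\sup\{d_\lambda(r,s)+w^i(s):s\in\widetilde{\mathcal{A}}\}$, i.e., that a near-optimal trajectory can actually be chosen to terminate in the Aubry set. Coercivity \eqref{condition:c} guarantees $c(r)\geq\frac{n-1}{r}$ on $[r_{cr},R]$, providing the controllability needed to push trajectories into $\widetilde{\mathcal{A}}$ via the Skorokhod problem from \cite{I2}; the construction parallels that of \eqref{formula:asympprofile} to be proved in Theorem \ref{thm:radprofile} and follows the dynamics-based approach of \cite{DS} that the author invokes in order to accommodate the Neumann condition at $r=R$.
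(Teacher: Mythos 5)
There is a genuine gap. Your entire argument rests on the stationary representation formula $w(r)=\sup\{d_\lambda(r,s)+w(s):s\in\widetilde{\mathcal{A}}\}$ holding for \emph{every} viscosity solution $w$ of \eqref{eq:radialeigen}, and you never prove it — you only flag it as ``the main obstacle'' and gesture at controllability and \cite{I2,DS}. But that formula is at least as strong as what is being established here: it immediately yields the full uniqueness-set property of $\widetilde{\mathcal{A}}$ (the proposition following Proposition \ref{prop:compprior}), which is exactly what Proposition \ref{prop:compprior} is a stepping stone toward. Moreover, the set $\widetilde{\mathcal{A}}$ in this paper is defined \emph{ad hoc} as the set of maximizers in \eqref{formula:eigenvalue}; identifying it with the intrinsic Aubry set for which weak-KAM representation formulas of the type you invoke are valid — in particular deciding the role of the boundary point $r=R$, where the reflection term $\phi(\eta)l$ and the sign of $\phi(R)$ enter — is precisely the nontrivial boundary analysis that has to be carried out. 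So granting your key lemma, the path-splitting argument (first hitting time of $r_0$, resp.\ $M$, plus the inequality $w(r)\geq d_\lambda(r,s)+w(s)$) is fine, but as written the proof assumes a statement that subsumes the conclusion rather than proving it.

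For contrast, the paper's own proof is far more elementary and purely one-dimensional: the $[r_{cr},r_0]$ part is unchanged from \cite[Lemma 2.4]{GMT}, and on $[M,R]$ one shows the solutions cannot have a corner from below in $(M,R)$, so if they separated one of them would have to follow the branch $w_r=\frac{-r(-f(r)-\lambda)}{rc(r)+(n-1)}$ all the way to $r=R$; the Neumann condition then forces $\phi(R)\geq 0$, which contradicts $R\notin\widetilde{\mathcal{A}}$, i.e.\ $-f(R)+\phi(R)\left(\frac{n-1}{R}+c(R)\right)<\lambda$. If you want to pursue your control-theoretic route, you would first have to prove the representation formula over $\widetilde{\mathcal{A}}$ (including the correct treatment of the Skorokhod/boundary term and the verification that $\widetilde{\mathcal{A}}$ is the correct uniqueness set), at which point Proposition \ref{prop:compprior} becomes redundant; in its current form the proposal does not constitute a proof.
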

\begin{proof}
The only part that changes is where we prove $w^1=w^2$ on $[M,R]$. To prove this, we may assume without loss of generality that $0<r_{cr}<R$ and $M<R$. We claim that $w^1$and $w^2$ cannot have a corner from below in $(M,R)$ so that they agree on $[M,R]$ by \eqref{eq:radialeigen}.

Suppose not, i.e., there would exist $i\in\{1,2\}$ and $y\in[M,R)$ such that
$$
(w^i)_r(r)=\frac{-r(-f(r)-\lambda)}{rc(r)+(n-1)}\qquad\text{for all }r\geq y.
$$
At $r=R$,
$$
\phi(R)=(w^i)_r(R)=\frac{-R(-f(R)-\lambda)}{Rc(R)+(n-1)}.
$$
This means that $\phi(R)\geq0$. However, from the assumption that $R\notin\widetilde{\mathcal{A}}$, we have
$$
-f(R)+\phi(R)\left(\frac{n-1}{R}+c(R)\right)<\lambda,
$$
or,
$$
-f(R)-\lambda<-\phi(R)\left(\frac{n-1}{R}+c(R)\right)<0.
$$
This yields a contradiction, as
$$
\phi(R)=\frac{-R(-f(R)-\lambda)}{Rc(R)+(n-1)}>\frac{-R}{Rc(R)+(n-1)}\cdot\left(-\phi(R)\left(\frac{n-1}{R}+c(R)\right)\right)=\phi(R).
$$
\end{proof}

This proposition implies the following proposition of the uniqueness set property of the Aubry set $\widetilde{\mathcal{A}}$.

\begin{proposition}
The following hold;

\noindent (i) If $w^1,w^2$ are solutions of \eqref{eq:radialeigen} such that $w^1=w^2$ on $\widetilde{\mathcal{A}}$, then $w^1=w^2$ on $[0,R]$.

\noindent (ii) If $w^1$ and $w^2$ are a subsolution and a supersolution of \eqref{eq:radialeigen}, respectively, and if $w^1\leq w^2$ on $\widetilde{\mathcal{A}}$, then $w^1\leq w^2$ on $[0,R]$.
\end{proposition}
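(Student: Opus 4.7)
The plan is to prove (ii) first and then deduce (i) by a symmetric application. For (ii), I would use the optimal-control representation along with the Skorokhod dynamics $\mathrm{SP}(r)$ already developed in \eqref{repnformula} and \eqref{formula:distance}. Setting $u(r,t) := w(r)+\lambda t$ converts the stationary problem \eqref{eq:radialeigen} into an instance of the time-dependent equation \eqref{eq:radial} with initial data $w$, so the sup-formula and its consequences for subsolutions and supersolutions transfer directly. Concretely, a supersolution $w^2$ admits, for every $r\in[0,R]$ and every $T\ge 0$, an admissible triple $(\eta,v,l)\in\mathrm{SP}(r)$ such that
\[
w^2(r)\ge w^2(\eta(T))+\int_0^T\bigl(-f(\eta(s))-\phi(\eta(s))l(s)-\lambda\bigr)\,ds,
\]
while any subsolution $w^1$ satisfies the reverse inequality along the \emph{same} trajectory. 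Subtracting gives $(w^2-w^1)(r)\ge (w^2-w^1)(\eta(T))$ for all $T\ge 0$.

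The crux is then to show that along this optimal trajectory, $\eta(T_n)$ accumulates on $\widetilde{\mathcal{A}}$ for some sequence $T_n\to\infty$. I would argue this in two steps. First, in the region $[0,r_{cr})$ the inequality \eqref{etadot1} forces $\dot\eta\ge\frac{n-1}{\eta}-c(\eta)>0$, so any admissible trajectory must exit this region in finite time and enter $[r_{cr},R]$; thus without loss of generality I may assume $\eta(s)\in[r_{cr},R]$ for all large $s$. Second, by the definition of $\lambda$ in \eqref{formula:eigenvalue} and of $\widetilde{\mathcal{A}}$, the running cost $-f(\eta(s))-\phi(\eta(s))l(s)-\lambda$ is nonpositive pointwise on $[r_{cr},R]$ and vanishes precisely on $\widetilde{\mathcal{A}}$ (accounting for the Dirac contribution at $r=R$ via the reflection term $\phi(R)l$). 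If the optimal trajectory stayed uniformly bounded away from $\widetilde{\mathcal{A}}$, the integrand would be uniformly strictly negative and the integral would diverge to $-\infty$, contradicting boundedness of $w^2$. Hence there must exist $T_n\to\infty$ with $\eta(T_n)\to r^\ast\in\widetilde{\mathcal{A}}$. Passing to this limit and using continuity together with the hypothesis $w^1(r^\ast)\le w^2(r^\ast)$ yields $(w^2-w^1)(r)\ge 0$ for the arbitrary $r$.

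Part (i) follows by applying (ii) with the roles of $w^1$ and $w^2$ exchanged (each is both sub- and super-solution) to obtain $w^1\le w^2$ and $w^2\le w^1$ on $[0,R]$. The main obstacle I anticipate is the rigorous derivation of the dynamic programming principle for viscosity sub- and super-solutions of \eqref{eq:radialeigen} with the reflecting Neumann condition at $r=R$ and the singular coefficient $\frac{n-1}{r}$ near the origin; the former is classical via \cite{I2} but needs care since the Skorokhod reflection is active only at the outer boundary, and the latter requires tracking that optimal trajectories do not spend too much time near $r=0$ where characteristics escape outward. A secondary technical point is the asymptotic attraction argument: one must rule out the possibility that the optimal trajectory oscillates and accumulates cost without actually approaching $\widetilde{\mathcal{A}}$, which I would resolve by a compactness argument on $[0,R]$ together with the fact that the set where $-f+\delta(r-R)\phi(R)(\frac{n-1}{R}+\mathrm{sgn}(\phi(R))c(R))$ is within $\varepsilon$ of $\lambda$ shrinks to $\widetilde{\mathcal{A}}$ as $\varepsilon\to 0$.
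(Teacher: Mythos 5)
Your approach is genuinely different from the paper's. The paper obtains this statement as a corollary of Proposition \ref{prop:compprior}, a one-dimensional ODE/PDE comparison result established through corner analysis of solutions of \eqref{eq:radialeigen}, and defers the remaining steps to \cite[Section 2]{GMT}; there is no dynamic programming in the paper's route. You instead propose a direct weak-KAM/optimal-control argument via the Skorokhod representation \eqref{repnformula}. This is a reasonable and in some ways more conceptual route, and you correctly flag the main technical points (Neumann reflection only at $r=R$, singularity at $r=0$, ruling out wandering trajectories). However, as written the argument has a gap in how the dynamic programming inequalities are assigned.

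First, the roles of sub- and supersolution are muddled. For the sup-type value function \eqref{repnformula}, a \emph{super}solution $w^2$ satisfies the inequality $w^2(r)\ge w^2(\eta(T))+\int_0^T(\text{cost}-\lambda)\,ds$ for \emph{every} admissible $(\eta,v,l)\in\mathrm{SP}(r)$; it is the \emph{sub}solution $w^1$ for which one must, for each $T$ and $\epsilon>0$, \emph{produce} a near-optimal trajectory along which $w^1(r)\le w^1(\eta(T))+\int_0^T(\text{cost}-\lambda)\,ds+\epsilon$ holds. Your text has it backwards: you say the supersolution "admits" a trajectory while the subsolution satisfies the reverse inequality along the \emph{same} (arbitrary) trajectory. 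The latter is false in general. The fix is to pick the near-optimal trajectory for $w^1$ and then invoke the universal supersolution inequality for $w^2$ along that curve. Second, and for the same reason, "contradicting boundedness of $w^2$" should be boundedness of $w^1$: if the running cost is uniformly $\le-\delta<0$ away from $\widetilde{\mathcal{A}}$, then $\int_0^T(\text{cost}-\lambda)\,ds\to-\infty$, which makes the supersolution inequality vacuous; it is the \emph{sub}solution inequality $w^1(r)\le w^1(\eta(T))+\int_0^T(\cdots)\,ds+\epsilon$, together with the boundedness of $w^1$, that forces the integral to stay bounded below and hence forces the trajectory to accumulate on $\widetilde{\mathcal{A}}$. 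Third, the near-optimal trajectory depends on $T$, so "$\eta(T_n)\to r^*$" really means a diagonal statement over a family of curves; to make this rigorous you either need a compactness/diagonal argument or, more cleanly, the existence of a single calibrated curve issued from $r$ whose $\omega$-limit set lies in $\widetilde{\mathcal{A}}$, which is the standard weak KAM lemma here. There is also a minor sign inconsistency: you write $-\phi(\eta(s))l(s)$ in the running cost, which matches \eqref{formula:distance} but not \eqref{repnformula}; whichever convention is adopted, the point is that the boundary contribution, maximized over $l\in[\tfrac{n-1}{R}-c(R),\,\tfrac{n-1}{R}+c(R)]$ as in \eqref{etadot2}, is exactly the term appearing in \eqref{formula:eigenvalue}, which is what guarantees the running cost is nonpositive on $[r_{cr},R]$.
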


Now we prove Theorem \ref{thm:radprofile} based on the uniqueness set property of the Aubry set.
\begin{proof}[Proof of Theorem \ref{thm:radprofile}]
Since we already found the eigenvalue $\lambda$, defined the Aubry set $\widetilde{\mathcal{A}}$ and the number $r_{cr}$ in the preceding discussions, it suffices to prove the asymptotic behavior and to find the large time profile in this proof.

The proof follows almost the same as that of \cite[Theorem 1.1]{GMT}, but we put a extra care on the boundary $r=R$. Following the proof of \cite[Theorem 1.3]{GMT}, we can prove (ii) of Theorem \ref{thm:radprofile} once we prove (i) of Theorem \ref{thm:radprofile}. Thus, it suffices to show that $\varphi(r,t)-\lambda t$ converges as $t\to\infty$ uniformly in $r\in[0,R]$. %Note that we already computed the eigenvalue $\lambda$, defined the Aubry set $\widetilde{\mathcal{A}}$, and discussed the number $r_{cr}$.

The first case we consider is when $r_{cr}=\infty$. Note that by \eqref{etadot1} every admissible curve $\eta=\eta(s)$, i.e., $(\eta,v,l)\in\mathrm{SP}(r)$ for some $v,l,r$, satisfies
\begin{align}\label{repnformula1}
\dot{\eta}(s)\geq\frac{n-1}{\eta(s)}-c(\eta(s)).
\end{align}

Then $\eta$ always moves to the right with minimal speed $\delta>0$ for some $\delta>0$. Therefore, using the formula \eqref{repnformula},
\begin{equation*}
\varphi(r,t)-\lambda t=\sup\left\{\int_0^t\left(-f(\eta(s))+\phi(\eta(s))l(s)-\lambda\right)ds+u_0(\eta(t)):\ (\eta,v,l)\in\mathrm{SP}(r)\right\}
\end{equation*}
does not change as $t$ varies after $t>\frac{R}{\delta}$.

The second case is when $r_{cr}<\infty$. We claim that for any $r\in\widetilde{\mathcal{A}}$, and for any $t_1\leq t_2$, we have
$$
\varphi(r,t_1)-\lambda t_1\leq\varphi(r,t_2)-\lambda t_2.
$$
Let us write the Skorokhod problem in \eqref{repnformula1} as $\mathrm{SP}(r,t)=\mathrm{SP}(r)$ to show the dependence in $t$. Then a triple $(\eta,v,l)\in\mathrm{SP}(r,t_1)$ induces a triple $(\widetilde{\eta},\widetilde{v},\widetilde{l})\in\mathrm{SP}(r,t_2)$ by means of
$$
(\widetilde{\eta},\widetilde{v},\widetilde{l})(s)=\left\{\begin{array}{ll}
(\eta,v,l)(0),\qquad\qquad\qquad\hspace{3mm}\text{for }0\leq s\leq t_2-t_1,\\
(\eta,v,l)(s-(t_2-t_1)),\qquad\text{for }t_2-t_1\leq s\leq t_2.
\end{array}\right.
$$
This yields
\begin{multline*}
\int_0^{t_2}\left(-f(\widetilde{\eta}(s))+\phi(\widetilde{\eta}(s))\widetilde{l}(s)-\lambda\right)ds+u_0(\widetilde{\eta}(t_2))=\\
\int_0^{t_1}\left(-f(\eta(s))+\phi(\eta(s))l(s)-\lambda\right)ds+u_0(\eta(t_1)),
\end{multline*}
and this is because $r\in\widetilde{\mathcal{A}}$ so that the integrand above is zero while $(\widetilde{\eta},\widetilde{v},\widetilde{l})\in\mathrm{SP}(r,t_2)$ stays still upto $s=t_2-t_1$. This argument of embedding $\mathrm{SP}(r,t_1)$ into $\mathrm{SP}(r,t_2)$ gives, together with \eqref{repnformula1}, that $\varphi(r,t_1)-\lambda t_1\leq\varphi(r,t_2)-\lambda t_2$.

The rest proof follows the same as that of \cite[Theorem 1.1]{GMT}. We also refer to \cite{DS}
\end{proof}

We give an example of nonuniqueness of \eqref{eq:levelset} when $0<q<1$ before we end the section.

\begin{example}\label{ex2}
Consider
\begin{equation}\label{eq:radialeigenpatho}
\begin{cases}
\lambda-\frac{n-1}{r}w_r-c(r)\lvert w_r\rvert+f(r)=0 \quad &\text{ in } (0,R)\times(0,\infty),\\
\displaystyle \hspace{39.5mm} w_r(R)=\phi(R)|w_r(R)|^{1-q},
\end{cases}
\end{equation}
where $0<q<1$. Let $\phi(R)=1$. We also let $f\equiv0$, $c\equiv0$. Then $c$ is coercive by Corollary \ref{cor:strcvxlevelset}.

By the definition of viscosity solutions, we see that the condition $w_r(R)=\phi(R)|w_r(R)|^{1-q}$ is satisfied if $w_r(R)=\mathrm{sgn}(\phi(R))|\phi(R)|^{\frac{1}{q}}$ in the classical sense. Then, one can check that
$\lambda_1=\frac{n-1}{R}$, $w^1(r)=\frac{r^2}{2R}$ solve \eqref{eq:radialeigenpatho}.

Also, if the boundary condition $v_r(R)=0$ is true in the classical sense, then the condition $w_r(R)=\phi(R)|w_r(R)|^{1-q}$ is satisfied in the viscosity sense. Then $\lambda_2=0$, $w^2\equiv C$, where $C$ is a constant, solve \eqref{eq:radialeigenpatho}.

Therefore, we have two distinct eigenvalues admitting a solution, which result in two different solutions $\varphi^i(r,t)=\lambda_it+w^i(x)$, $i=1,2$, of
\begin{equation}\label{eq:radialpatho}
\begin{cases}
\varphi_t-\frac{n-1}{r}\varphi_r-c(r)\lvert \varphi_r\rvert+f(r)=0 \quad &\text{ in } (0,R)\times(0,\infty),\\
\displaystyle \hspace{41mm} \varphi_r(R)=\phi(R)|\varphi_r(R)|^{1-q}\quad &\text{} \\
\hspace{40mm}\varphi(r,0)=u_0(r) \quad &\text{ for } r\in[0,R].
\end{cases}
\end{equation}
\end{example}

%%%%%%%%%%%%%%%%%%%%%%%%%%%%%%%%%%%%%%%%%%%%%%%%%%%%%%%%%%%%%%%%%%%%%%%%%%%%%%%%%%%%%%%%%

\section*{Appendix A}\label{sec:appendix}
In this appendix, we provide the definition of viscosity solutions of \eqref{eq:levelset} and give the results on the comparison principle and the stability under the conditions \eqref{assumtion:c}, \eqref{assumtion:f} on $c,f$, respectively.

\medskip

Let $F:\overline{\Omega}\times\mathbb{R}\times\mathbb{R}^n \setminus \{0\}\times \mathcal{S}_n \rightarrow \mathbb{R}$ be such that
$$
F(x,z,p,X)=\text{trace}\left(\left(I-\frac{p\otimes p}{|p|^2}\right)X\right)+c(x,z)|p|-f(x,z),
$$
where $\mathcal{S}_n$ is the set of square symmetric matrices of size $n$. Together with the assumption that $c_z\leq0,\ f_z\geq0$, we see that $-F$ is degenerate elliptic and proper, i.e.,
$$
-F(x,z,p,X)\leq -F(x,w,p,Y)\quad\textrm{whenever }Y\leq X,\ z\leq w.
$$

Define the lower and upper semicontinuous envelopes of $F$ by, for $(x,z,p,X)\in\overline{\Omega}\times\mathbb{R}\times\mathbb{R}^n\times\mathcal{S}_n$,
$$
F_*(x,z,p,X)=\liminf_{(y,w,q,Y)\rightarrow(x,z,p,X)}F(y,w,q,Y),
$$
and
$$
F^*(x,z,p,X)=\limsup_{(y,w,q,Y)\rightarrow(x,z,p,X)}F(y,w,q,Y),
$$
respectively.

\begin{definition} A function $u:\overline{\Omega}\times [0,\infty)\rightarrow \mathbb{R}$ is said to be a viscosity subsolution (a viscosity supersolution, resp.)of \eqref{eq:levelset}  if
\begin{itemize}
\item $u$ is upper semicontinuous (lower semicontinuous, resp.);

\item for all $x\in \overline{\Omega}$, $u^*(x,0)\leq u_0(x)$ ($u_*(x,0)\geq u_0(x)$, resp.);

\item for any function $\varphi\in C^2(\overline{\Omega}\times[0,\infty))$, if $(\hat{x},\hat{t})\in \overline{\Omega}\times(0,\infty)$ is a maximizer (a minimizer, resp.) of $u-\varphi$, then, at $(\hat{x},\hat{t})$,
\begin{equation*}
\begin{cases}
\varphi_t(\hat{x},\hat{t})-F^*(\hat{x},u(\hat{x},\hat{t}),D\varphi(\hat{x},\hat{t}),D^2\varphi(\hat{x},\hat{t}))\leq 0\quad &\text{ if } \hat{x}\in\Omega,\\
\min\left\{\varphi_t(\hat{x},\hat{t})-F^*(\hat{x},u(\hat{x},\hat{t}),D\varphi(\hat{x},\hat{t}),D^2\varphi(\hat{x},\hat{t})),\frac{\partial\varphi}{\partial\Vec{\mathbf{n}}}(\hat{x},\hat{t})-\phi(\hat{x},\hat{t})\right\}\leq 0\quad &\text{ if } \hat{x}\in\partial\Omega.\\
\end{cases}
\end{equation*}
\begin{equation*}
\left(
\begin{cases}
\varphi_t(\hat{x},\hat{t})-F_*(\hat{x},u(\hat{x},\hat{t}),D\varphi(\hat{x},\hat{t}),D^2\varphi(\hat{x},\hat{t}))\geq 0\quad &\text{ if } \hat{x}\in\Omega,\\
\max\left\{\varphi_t(\hat{x},\hat{t})-F_*(\hat{x},u(\hat{x},\hat{t}),D\varphi(\hat{x},\hat{t}),D^2\varphi(\hat{x},\hat{t})),\frac{\partial\varphi}{\partial\Vec{\mathbf{n}}}(\hat{x},\hat{t})-\phi(\hat{x},\hat{t})\right\}\geq 0\quad &\text{ if } \hat{x}\in\partial\Omega,\ \text{resp.}\\
\end{cases}
\right)
\end{equation*}

\end{itemize}
\noindent A function $u:\overline{\Omega}\times [0,\infty)\rightarrow \mathbb{R}$ is a viscosity solution of \eqref{eq:levelset} if $u$ is both its viscosity subsolution and its viscosity supersolution.
\end{definition}

\begin{proposition}[Comparison principle for  \eqref{eq:levelset}]\label{prop:comp}
Let $\Omega$ be a bounded domain in $\mathbb{R}^n$ with $C^3$ boundary $\partial\Omega$. Suppose that $c,f$ satisfy \eqref{assumtion:c}, \eqref{assumtion:f}, respectively.
Let $u$ be a subsolution and $v$ be a supersolution of  \eqref{eq:levelset}, respectively. 
Then, $u^*\leq v_*$ in $\overline{\Omega}\times[0,\infty)$.
\end{proposition}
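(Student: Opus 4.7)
The plan is to carry out a doubling-of-variables argument in the style of Chen--Giga--Goto and Giga--Ohnuma--Sato, with two technical ingredients layered on top of the standard scheme: (i) a boundary-flattening correction in the test function that converts the oblique capillary condition $\partial u/\partial\Vec{\mathbf{n}}=\phi$ into an effectively homogeneous Neumann condition, and (ii) the usual workaround for the singularity of $F$ at $p=0$ using the semicontinuous envelopes $F^*,F_*$. Fix any $T>0$; it suffices to prove $u^*\leq v_*$ on $\overline{\Omega}\times[0,T]$. By the classical reduction, replace $u^*$ by $u^*-\sigma/(T-t)$ for small $\sigma>0$, so that $u^*$ is a strict subsolution in the interior and the maximum of $u^*-v_*$ can no longer be attained as $t\to T$. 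Argue by contradiction: assume $M_0:=\sup_{\overline{\Omega}\times[0,T]}(u^*-v_*)>0$; since $u^*(\cdot,0)\leq u_0(\cdot)\leq v_*(\cdot,0)$ the maximum is attained at some $(\hat x,\hat t)$ with $\hat t\in(0,T)$.

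Next, using the $C^3$ function $h$ introduced in Section~\ref{sec:gradestim} (with $Dh=\Vec{\mathbf{n}}$ and $h$ constant on $\partial\Omega$), I will introduce the doubled penalization
\[
\Phi_\epsilon(x,y,t,s)=u^*(x,t)-v_*(y,s)-\frac{|x-y+\epsilon\phi(\hat x)Dh(\hat x)|^2}{2\epsilon}-\frac{(t-s)^2}{2\epsilon}-\beta|x-\hat x|^2-\beta(t-\hat t)^2
\]
for small parameters $\epsilon,\beta>0$. The role of the $\epsilon\phi(\hat x)Dh(\hat x)$ shift is precisely that the gradient in $x$ of the first quadratic term picks up a $-\phi(\hat x)\Vec{\mathbf{n}}$ correction at the boundary; together with $Dh\cdot\Vec{\mathbf{n}}=1$ on $\partial\Omega$, this makes it impossible for the boundary clause in the viscosity definition to be activated at the doubled maximizers, for $\epsilon,\beta$ small. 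The $\beta$-penalization forces maximizers $(x_\epsilon,y_\epsilon,t_\epsilon,s_\epsilon)$ of $\Phi_\epsilon$ to converge to $(\hat x,\hat x,\hat t,\hat t)$ as $\epsilon\to 0$, and the standard penalization lemma gives $|x_\epsilon-y_\epsilon|^2/\epsilon, |t_\epsilon-s_\epsilon|^2/\epsilon\to 0$.

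The third step is to apply the parabolic Crandall--Ishii lemma at $(x_\epsilon,y_\epsilon,t_\epsilon,s_\epsilon)$ to produce symmetric matrices $X_\epsilon,Y_\epsilon$ with $X_\epsilon\leq Y_\epsilon+O(\epsilon)$ and to insert them into the subsolution/supersolution inequalities for $u^*$ and $v_*$ respectively. When the discrete ``gradient'' $p_\epsilon:=(x_\epsilon-y_\epsilon)/\epsilon+\phi(\hat x)Dh(\hat x)$ stays bounded away from $0$, subtraction of the two viscosity inequalities, together with the monotonicity assumptions $c_z\leq 0$ and $f_z\geq 0$ from \eqref{assumtion:c}--\eqref{assumtion:f}, produces a term $\leq-\sigma/T^2+o_\epsilon(1)+o_\beta(1)$, which is the desired contradiction once we send $\epsilon\to 0$ then $\beta\to 0$. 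When $p_\epsilon\to 0$ along a subsequence, the singularity is handled using $F^*(x,z,0,X)=\mathrm{trace}(X)+2\sqrt{\mathrm{trace}(X^2)\wedge\ldots}$ bounds combined with $X_\epsilon\leq Y_\epsilon+O(\epsilon)$, which ensures $F^*(x_\epsilon,\cdot,0,X_\epsilon)-F_*(y_\epsilon,\cdot,0,Y_\epsilon)\to 0$; the $c|p|-f$ terms are also continuous at $p=0$ up to the sign convention.

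The main obstacle will be verifying, carefully and uniformly in $\epsilon,\beta$, that the boundary alternative in the viscosity definition is never realized at $(x_\epsilon,t_\epsilon)$ or $(y_\epsilon,s_\epsilon)$: for a boundary maximizer, say $x_\epsilon\in\partial\Omega$, the test function's normal derivative in $x$ equals $-[(x_\epsilon-y_\epsilon)/\epsilon+\phi(\hat x)Dh(\hat x)]\cdot\Vec{\mathbf{n}}(x_\epsilon)+O(\beta)$, and one must show that after correctly accounting for the $\phi(\hat x)Dh(\hat x)$ shift, this quantity strictly exceeds $\phi(x_\epsilon)$ (so the alternative clause cannot hold), which uses the uniform continuity of $\phi$ and $Dh$ near $\hat x$. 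An analogous computation rules out the boundary clause at $y_\epsilon$. Once this is verified, the interior PDE inequality can be used in all cases and the argument closes as above.
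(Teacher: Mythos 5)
Your plan has the right shape — the paper leaves this proposition to the literature (\cite{B, GS, CIL}), and those references are indeed doubling-of-variables comparison theorems for oblique/Neumann problems with a singular operator — but two of your technical ingredients don't actually close the argument.

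First, the boundary-flattening shift. You penalize with $|x-y+\epsilon\phi(\hat x)Dh(\hat x)|^2/(2\epsilon)$, so at a boundary maximizer $x_\epsilon\in\partial\Omega$ the test gradient's normal component is
$\bigl[(x_\epsilon-y_\epsilon)/\epsilon\bigr]\cdot\Vec{\mathbf{n}}(x_\epsilon)+\phi(\hat x)Dh(\hat x)\cdot\Vec{\mathbf{n}}(x_\epsilon)+O(\beta)$.
The second term tends to $\phi(\hat x)\approx\phi(x_\epsilon)$, so ruling out the boundary clause requires $(x_\epsilon-y_\epsilon)/\epsilon\cdot\Vec{\mathbf{n}}(x_\epsilon)>o(1)$. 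But the doubling lemma only controls $|x_\epsilon-y_\epsilon|^2/\epsilon$, not $(x_\epsilon-y_\epsilon)/\epsilon$, and on a non-convex $\Omega$ the sign of $(x_\epsilon-y_\epsilon)\cdot\Vec{\mathbf{n}}(x_\epsilon)$ is not under control (it is $\geq -C|x_\epsilon-y_\epsilon|^2$, but after dividing by $\epsilon$ that is not a useful one-sided bound on the relevant scale). Uniform continuity of $\phi$ and $Dh$ — the ingredient you invoke — does nothing here. This is precisely the hard step that Barles \cite{B} and Giga--Sato \cite{GS} resolve by building the boundary geometry into the penalization itself: the test function gets an extra term that grows like the signed distance to $\partial\Omega$ (roughly, a term $C\,d(x)$ or $-C\,d(y)$ with a large constant $C$, together with a refined quadratic that is not symmetric in $x$ and $y$), which forces the normal derivative of the test function to be uniformly strictly above $\phi$ at $x_\epsilon$ and strictly below at $y_\epsilon$ regardless of the sign of $(x_\epsilon-y_\epsilon)\cdot\Vec{\mathbf{n}}$. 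A rigid shift by $\epsilon\phi(\hat x)Dh(\hat x)$ cannot produce this uniform margin.

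Second, the $p=0$ singularity. At $p=0$ the envelopes are $F^*(x,z,0,X)=\operatorname{tr}X-\lambda_{\min}(X)-f$ and $F_*(y,z,0,Y)=\operatorname{tr}Y-\lambda_{\max}(Y)-f$, so the difference you need to control is $\operatorname{tr}(X_\epsilon-Y_\epsilon)-\lambda_{\min}(X_\epsilon)+\lambda_{\max}(Y_\epsilon)$; the inequality $X_\epsilon\leq Y_\epsilon$ handles the trace piece but gives no bound on $\lambda_{\max}(Y_\epsilon)-\lambda_{\min}(X_\epsilon)$, which can be large for a purely quadratic penalization. The standard fix (Chen--Giga--Goto, and also \cite{GS}) is to penalize with $|x-y|^4/\epsilon$ (so that when the test gradient vanishes, the matrix bound from the Ishii lemma actually forces both $X_\epsilon\leq 0\leq Y_\epsilon$ up to small errors) rather than $|x-y|^2/\epsilon$; the quadratic version you wrote does not produce the needed degeneracy at the critical point, and the formula you cite for $F^*$ at $p=0$ (``$\operatorname{trace}(X)+2\sqrt{\ldots}$'') is not the correct envelope of the level-set MCF operator.

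So: the skeleton (strict subsolution reduction, doubling, Crandall--Ishii, use of $c_z\leq 0$ and $f_z\geq 0$ for properness, limit $\epsilon\to 0$ then $\beta\to 0$) is the right one and matches the references, but the two devices you rely on — the fixed-vector shift to neutralize the oblique condition, and the quadratic penalization near $p=0$ — both need to be replaced by the more elaborate constructions in \cite{B,GS} to make the argument correct, especially since the paper explicitly allows non-convex domains.
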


We can follow \cite{B} with slight modifications for the comparison principle of viscosity solutions of \eqref{eq:levelset}. We also refer to \cite{CIL, GS}.

\begin{lemma}\label{lem:stable}
Suppose that $u^{\eta}$ is the unique solution of \eqref{eq} for each $\eta>0$, and there exists $u\in C(\ol \Omega \times [0,\infty))$ such that
$$
u^{\eta}\rightarrow u,\quad \text{as}\ \eta\rightarrow 0,
$$
uniformly on $\overline{\Omega}\times[0,T)$ for each $T>0$.
Then $u$ is the unique viscosity solution of \eqref{eq:levelset}.
\end{lemma}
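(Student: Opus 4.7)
\textbf{Proof plan for Lemma \ref{lem:stable}.} The plan is to split the assertion into uniqueness and existence. Uniqueness is immediate: if $u,\tilde u$ are two viscosity solutions of \eqref{eq:levelset} with the same initial datum $u_0$, then the comparison principle (Proposition \ref{prop:comp}) applied twice (once with $u$ as subsolution and $\tilde u$ as supersolution, once with the roles reversed) forces $u=\tilde u$ on $\overline\Omega\times[0,\infty)$. So the substantive work is showing that the uniform limit $u=\lim_{\eta\to0}u^\eta$ is itself a viscosity solution of \eqref{eq:levelset}. I would rely throughout on the fact, guaranteed by Theorem \ref{thm:local-grad} and the standard parabolic theory, that each $u^\eta$ is a classical (hence viscosity) solution of \eqref{eq} with $q=1$, so that the operator $F^\eta(x,z,p,X):=\text{tr}\bigl((I-\tfrac{p\otimes p}{\eta^2+|p|^2})X\bigr)+c(x,z)\sqrt{\eta^2+|p|^2}-f(x,z)$ converges locally uniformly on $\overline\Omega\times\mathbb{R}\times(\mathbb{R}^n\setminus\{0\})\times\mathcal{S}_n$ to $F$, while at $p=0$ one passes to the envelopes $F_*,F^*$.

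First, I would verify the initial condition: since $u^\eta(\cdot,0)\equiv u_0$ and $u^\eta\to u$ uniformly on $\overline\Omega\times[0,T]$ for every $T>0$, we get $u(\cdot,0)=u_0$ on $\overline\Omega$, so $u^*(\cdot,0)\le u_0\le u_*(\cdot,0)$ trivially. Next, to check the subsolution property at a point $(\hat x,\hat t)\in\overline\Omega\times(0,\infty)$, fix $\varphi\in C^2(\overline\Omega\times[0,\infty))$ such that $(\hat x,\hat t)$ is a strict local maximum of $u-\varphi$. By the standard perturbation (replace $\varphi$ by $\varphi+|x-\hat x|^4+|t-\hat t|^4$) and uniform convergence, there exist $(x_\eta,t_\eta)\to(\hat x,\hat t)$ which are local maxima of $u^\eta-\varphi$ for all small $\eta$. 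If $\hat x\in\Omega$, then $x_\eta\in\Omega$ for small $\eta$, so classical calculus gives $\varphi_t(x_\eta,t_\eta)=u^\eta_t(x_\eta,t_\eta)$, $D\varphi(x_\eta,t_\eta)=Du^\eta(x_\eta,t_\eta)$ and $D^2\varphi(x_\eta,t_\eta)\ge D^2 u^\eta(x_\eta,t_\eta)$; using the ellipticity of $F^\eta$ in $X$ and plugging into the equation at $(x_\eta,t_\eta)$ yields $\varphi_t(x_\eta,t_\eta)\le F^\eta(x_\eta,u^\eta(x_\eta,t_\eta),D\varphi(x_\eta,t_\eta),D^2\varphi(x_\eta,t_\eta))$. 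Passing $\eta\to0$ and using the definition of $F^*$ (to handle the case $D\varphi(\hat x,\hat t)=0$) gives the required viscosity subsolution inequality. The supersolution inequality is symmetric.

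If $\hat x\in\partial\Omega$, I would argue the same way to obtain $(x_\eta,t_\eta)\to(\hat x,\hat t)$, but now $x_\eta$ can lie in $\Omega$ or on $\partial\Omega$. Along a subsequence one of the two alternatives occurs: either $x_\eta\in\Omega$ for all small $\eta$, in which case the PDE inequality passes to the limit exactly as above and delivers the first branch of the boundary viscosity condition; or $x_\eta\in\partial\Omega$ for all small $\eta$, in which case the classical boundary identity $\frac{\partial u^\eta}{\partial\vec{\mathbf{n}}}(x_\eta,t_\eta)=\phi(x_\eta)(\sqrt{\eta^2+|Du^\eta|^2})^{1-q}=\phi(x_\eta)$ (recall $q=1$) combined with $Du^\eta(x_\eta,t_\eta)=D\varphi(x_\eta,t_\eta)$ yields $\frac{\partial\varphi}{\partial\vec{\mathbf{n}}}(x_\eta,t_\eta)=\phi(x_\eta)$, hence in the limit $\frac{\partial\varphi}{\partial\vec{\mathbf{n}}}(\hat x,\hat t)=\phi(\hat x)\le\phi(\hat x)$. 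In either case the min-inequality in the definition of viscosity subsolution is met at $(\hat x,\hat t)$; the supersolution case is strictly analogous.

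The main obstacle I anticipate is the delicate treatment of the singularity at $Du=0$: when $D\varphi(\hat x,\hat t)=0$ and yet $D\varphi(x_\eta,t_\eta)\ne0$ with $|D\varphi(x_\eta,t_\eta)|\to0$, the term $\frac{D\varphi\otimes D\varphi}{\eta^2+|D\varphi|^2}$ can have limiting behaviour depending on the direction, and one must control $F^\eta(x_\eta,u^\eta,D\varphi,D^2\varphi)$ using $F^*$ (resp.\ $F_*$) rather than $F$ itself; this is precisely what the envelope construction in the definition is designed to absorb, and one verifies that $\limsup_{\eta\to0,(x,z,p,X)\to(\hat x,\hat z,0,\hat X)}F^\eta(x,z,p,X)\le F^*(\hat x,\hat z,0,\hat X)$ by choosing worst-case directions. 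The boundary case (which never occurs at $Du=0$ for $q=1$ since $\phi$ and the compatibility condition force $|Du|\ge|\phi|$-like bounds transversally) is easier and is treated by the dichotomy above. Once both sub- and supersolution properties are verified, $u$ is a viscosity solution, and uniqueness again follows from Proposition \ref{prop:comp}.
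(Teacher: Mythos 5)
Your plan is essentially the standard stability-of-viscosity-solutions argument from \cite{CIL} (and, for the Neumann part, \cite{B}), which is exactly what the paper delegates this lemma to; the paper itself gives no proof beyond the citation. So the substance and route are correct and match what the reference supplies: write $u$ as the (uniform) limit of the half-relaxed limits, verify sub/super properties at interior points by producing approximating maximizers and exploiting the local uniform convergence $F^\eta\to F$ away from $p=0$ (with envelopes $F^*,F_*$ absorbing the $p\to 0$ directional ambiguity), handle the boundary by a dichotomy on whether the approximating maximizers are interior or on $\partial\Omega$, and finish by the comparison principle.

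Two small slips worth flagging. First, at a boundary local maximum of $u^\eta-\varphi$ on $\overline\Omega$ you cannot assert $Du^\eta(x_\eta,t_\eta)=D\varphi(x_\eta,t_\eta)$: the tangential derivatives agree, but in the normal direction one only has the one-sided inequality $\tfrac{\partial(u^\eta-\varphi)}{\partial\vec{\mathbf{n}}}(x_\eta,t_\eta)\ge 0$. Fortunately this is exactly the direction you need: combined with $\tfrac{\partial u^\eta}{\partial\vec{\mathbf{n}}}=\phi(x_\eta)$ it gives $\tfrac{\partial\varphi}{\partial\vec{\mathbf{n}}}(x_\eta,t_\eta)\le\phi(x_\eta)$, which passes to the limit and yields the required branch of the min-inequality, so the conclusion stands even though the intermediate ``$=$'' does not. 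Second, the parenthetical claim that $Du=0$ never occurs at a boundary test point when $q=1$ is false in general (e.g.\ if $\phi(\hat x)=0$, the normal derivative vanishes and the tangential ones may too). It is also unnecessary: when $x_\eta\in\partial\Omega$ along a subsequence, the Neumann branch is used and the value of $D\varphi$ is irrelevant; when $x_\eta\in\Omega$ along a subsequence, the interior argument with $F^*$ (resp.\ $F_*$) handles $D\varphi(\hat x,\hat t)=0$ just as it does for interior $\hat x$. Dropping that remark removes the only incorrect assertion without changing the proof.
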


We refer to \cite{CIL} for Lemma \ref{lem:stable}.

\section*{Appendix B}\label{sec:appendixB}

In this appendix, we provide a reason of why a priori gradient estimates (Propositions \ref{prop:exuniquegra} and \ref{prop:exuniquegratime}) yield the existence of solutions to \eqref{eq}. We leave \cite{MT} as the main reference.

Let $T\in(0,\infty)$, $X=C^{1,\alpha}(\Omega\times(0,T))$. For a given $w\in X$, we consider the following linear parabolic equation with a source term
\begin{equation}\label{appendixB1}
\begin{cases}
u_t=\textrm{tr}\left\{a(Dw)D^2u\right\}+c(x,w)\sqrt{\eta^2+|Dw|^2}-f(x,w) \quad &\text{ in } \Omega\times(0,T),\\
\displaystyle \frac{\partial u}{\partial \Vec{\mathbf{n}}}=\phi(x)(\sqrt{\eta^2+|Dw|^2})^{1-q}\quad &\text{ on } \partial\Omega\times[0,T),\\
u(x,0)=u_0(x) \quad &\text{ on } \overline{\Omega}.
\end{cases}
\end{equation}
Then, for any $w\in X$, there exists a unique solution $u_w\in C^{2,\alpha'}(\Omega\times(0,T))\subseteq X$ to \eqref{appendixB1} for some $\alpha'\in(0,\alpha)$ with
$$
\|u_w\|_{C^{2,\alpha'}(\Omega\times(0,T))}\leq C_1,
$$
where $C_1>0$ is a constant depending only on $n,\alpha,\|w\|_X,\|u_0\|_{C^{2,\alpha}(\Omega)}$ and on the constants in \eqref{assumtion:c}, \eqref{assumtion:f} (see \cite[Theorem 4.5.2]{LSU}).

Define a map $A:X\to X$ with $Aw=u_w$. Then $A$ is a continuous and compact map. To apply Schauder fixed point theorem, it suffices to prove that the set
$$
S=\{u\in X:u=\sigma Au\textrm{ for some $\sigma\in[0,1]$}\}
$$
is bounded in $X$. Then, $A$ admits a fixed point $u\in C^{2,\alpha'}(\Omega\times(0,T))$, and moreover, $u\in C^{1,\alpha'}(\overline{\Omega}\times[0,T])$ (see \cite{LSU,L2}) since $c,f\in C^{1,\alpha}(\overline{\Omega}\times\mathbb{R})$ and are bounded. Therefore, $u$ becomes a solution to \eqref{eq}, and the regularity of the solution $u$ is improved so that $u\in C^{3,\alpha'}(\Omega\times(0,T))\cap C^{2,\alpha'}(\overline{\Omega}\times[0,T])$ for some $\alpha'\in(0,\alpha)$ from the Schauder theory.

Let $u\in S$. Then, for some $\sigma\in[0,1]$, $u$ solves
\begin{equation}\label{appendixB2}
\begin{cases}
u_t=\textrm{tr}\left\{a(Du)D^2u\right\}+\sigma c(x,u)\sqrt{\eta^2+|Du|^2}-\sigma f(x,u) \quad &\text{ in } \Omega\times(0,T),\\
\displaystyle \frac{\partial u}{\partial \Vec{\mathbf{n}}}=\sigma\phi(x)(\sqrt{\eta^2+|Du|^2})^{1-q}\quad &\text{ on } \partial\Omega\times[0,T),\\
u(x,0)=\sigma u_0(x) \quad &\text{ on } \overline{\Omega}.
\end{cases}
\end{equation}
By Proposition \ref{prop:exuniquegratime}, we have that
$$
\|Du\|_{L^{\infty}(\Omega\times[0,T))}\leq C_2
$$
where $C_2>0$ is a constant depending only on $T,\Omega,c,f,\phi,q,u_0$. Here, we have used the fact that $\sigma\in[0,1]$. By interior Schauder estimates, we also have that
$$
\|Du\|_{C^{\alpha}(\Omega\times(0,T))}\leq C_3
$$
where $C_3>0$ is a constant depending only $T,\Omega,n,\alpha,c,f,\phi,q,u_0$. This yields that the set $S$ is bounded in $X$, and therefore, we obtain the existence.

Now, we apply Proposition \ref{prop:exuniquegra} to the obtained solution to conclude Theorem \ref{thm:global-grad}.

\section*{Appendix C}\label{sec:appendixC}

In this section, we provide the proof of Lemma \ref{lem:boundary} and that of \ref{lem:afternotation}.

\begin{proof}[Proof of Lemma \ref{lem:boundary}]
%On $\mathbb{R}^n$, we take another coordinate $y=(y_1,\cdots,y_n)$ of $\mathbb{R}^n$ such that $y(x_0)=(0,\cdots,0)$, which we call $y_0$, $\Vec{\mathbf{n}}(y_0)=(0,\cdots,0,-1)$, and that the $y_{\ell}-$axis lies along a principal direction corresponding to a principal curvature $\kappa_{\ell}$ of $\partial\Omega$ at $y_0$, $\ell=1,\cdots,n-1$, respectively.
%Let $\kappa_1,\cdots,\kappa_{n-1}$ be the principal curvatures of $\partial\Omega$ at $(0,\cdots,0)$, and let $\kappa_n=0$. Take a principal coordinate system $y'=(y_1,\cdots,y_{n-1})$ of $\partial\Omega$ at $(0,\cdots,0)$.

We take a copy of the space $(\mathbb{R}^n,x)$ with the coordinate $x$ given in the hypothesis of this lemma, and relabel the coordinate $x$ by $y$. We also relabel $x_0$ by $y_0$. We now construct a $C^2$ map $g$ from $(\mathbb{R}^n,y)$ to $(\mathbb{R}^n,x)$ around $y_0$ as follows.

Take an open neighborhood $U_1$ of $y_0=(0,\cdots,0)$ in $\mathbb{R}^n$ and a $C^3$ function $\varphi$ defined on $\{y'=(y_1,\cdots,y_{n-1}):(y',0)\in U_1\}$ such that $y=(y',y_n)\in\partial\Omega$ if and only if $y_n=\varphi(y')$. Then, the $y_{\ell}-$axis lies along an eigenvector corresponding to the eigenvalue $\kappa_{\ell}$ of the matrix $D^2\varphi(y_0)$, $\ell=1,\cdots,n-1$, respectively. Define the map $g:U_1\to \mathbb{R}^n$ by
$$
g(y',y_n)=(y',\varphi(y'))-\Vec{\mathbf{n}}(y',\varphi(y'))y_n.
$$
Then, $g$ is a $C^2$ function on $U_1$. Moreover, with respect to the coordinates $y$ on the domain $U_1\subseteq\mathbb{R}^n$ and $x$ on the codomain $\mathbb{R}^n$, the Jacobian $Jg$ at $(0,\cdots,0,y_n)$, $|y_n|<\sigma$, is the diagonal matrix, as
$$
Jg(0,\cdots,0,y_n)=
\begin{bmatrix}
1-\kappa_{1}y_n & &0 \\
& \ddots & \\
0& & 1-\kappa_{n}y_n
\end{bmatrix},
$$
where $\sigma>0$ is a positive number such that $\{(0,\cdots,0,y_n):|y_n|<\sigma\}\subseteq U_1$ and that $\sigma^{-1}>\max\{|\kappa_1|,\cdots,|\kappa_{n-1}|\}$. In particular, $Jg(0,\cdots,0)$ is the identity matrix, and therefore, by Inverse Function Theorem, there are an open neighborhood $U$ of $(0,\cdots,0)$ in $U_1(\subseteq\mathbb{R}^n)$ and an open neighborhood $V$ of $(0,\cdots,0)$ in $\mathbb{R}^n$ such that $g:U\to V$ is a $C^2$ diffeomorphism from $U$ onto $V$. We take a smaller number $\sigma>0$ if necessary so that $\{(0,\cdots,0,y_n):|y_n|<\sigma\}\subseteq U$ and that $\sigma^{-1}>\max\{|\kappa_1|,\cdots,|\kappa_{n-1}|\}$

By the chain rule, we obtain (iii), and then we obtain (iv) by differentiating (iii) in $y_n$ when $\zeta,\overline{\zeta}$ are $C^2$ functions. For (i), (ii), we refer to \cite[Lemma 14.16]{GL}. %For (v), we first have, from the chain rule, that
%$$
%\frac{\partial \overline{\zeta}}{\partial y_{n}}=\sum_{\ell=1}^{n-1}\frac{\partial\varphi}{\partial y_{\ell}}\frac{\partial \zeta}{\partial x_{\ell}}-\Vec{\mathbf{n}}_n(y',\varphi(y'))\frac{\partial \zeta}{\partial x_{n}}
%$$
%at $(y',0)\in U$ for $\ell=1,\cdots,n-1$. Here, $\Vec{\mathbf{n}}_n(y',\varphi(y'))$ is the $n-$th component of $\Vec{\mathbf{n}}(y',\varphi(y'))$. Note that $\Vec{\mathbf{n}}_n(y_0)=-1$. Now, differentiate the both sides in $y_{\ell}$, and use the fact that $\frac{\partial\varphi}{\partial y_{\ell}}=0$ at $y_0$ and the fact that $\frac{\partial}{\partial y_{\ell}}\left(\frac{\partial\varphi}{\partial y_{\ell}}\right)=\kappa_{\ell}$ at $y_0$, for $\ell=1,\cdots,n-1$. Also, since $\frac{\partial}{\partial y_{\ell}}\left(\Vec{\mathbf{n}}_n(y',\varphi(y'))\right)=0$ at $y_0$ and $\frac{\partial \zeta}{\partial x_{\ell}}=\frac{\partial\overline{\zeta}}{\partial y_{\ell}}$ at $y_0$, we obtain (v).
\end{proof}

We next give the proof of Lemma \ref{lem:afternotation}.

\begin{proof}[Proof of Lemma \ref{lem:afternotation}]
From $a(p)=I_n-\frac{p\otimes p}{\eta^2+|p|^2}$, we see that, for each $\ell=1,\cdots,n$,
$$
a_{p^{\ell}}(Du)=-\frac{1}{\eta^2+|Du|^2}\left(e_{\ell}\otimes Du+Du\otimes e_{\ell}\right)+\frac{2u_{\ell}}{(\eta^2+|Du|^2)^2}Du\otimes Du,
$$
where $e_{\ell}$ is the $\ell$-th element of the standard basis of $\mathbb{R}^n$. Thus,
$$
D_pa\odot\xi=-\frac{1}{\eta^2+|Du|^2}\left(\xi\otimes Du+Du\otimes \xi\right)+\frac{2Du\cdot\xi}{(\eta^2+|Du|^2)^2}Du\otimes Du.
$$
Together with the fact that $\textrm{tr}\{(p\otimes q)M\}=p\cdot(Mq)=q\cdot (Mp)$ for vectors $p,q\in\mathbb{R}^n$ and a symmetric matrix $M$, we obtain
\begin{align*}
v\textrm{tr}\{(D_p(Du)\odot\xi)D^2u\}&=-\frac{2}{\eta^2+|Du|^2}\textrm{tr}\{(\xi\otimes Du)vD^2u\}+\frac{2Du\cdot\xi}{(\eta^2+|Du|^2)^2}\textrm{tr}\{(Du\otimes Du)vD^2u\}\\
&=-\frac{2}{\eta^2+|Du|^2}\xi\cdot(vD^2uDu)+\frac{2Du\cdot \xi}{(\eta^2+|Du|^2)^2}Du\cdot(vD^2uDu)\\
&=-\frac{2}{\eta^2+|Du|^2}\xi\cdot(v^2Dv)+\frac{2Du\cdot \xi}{(\eta^2+|Du|^2)^2}Du\cdot(v^2Dv)\\
&=-2\xi\cdot Dv+\frac{2Du\cdot\xi}{\eta^2+|Du|^2}Du\cdot Dv.
\end{align*}
We have used the fact that $vDv=D^2uDu$. Now use the fact that $(p_1\cdot p_2)(q_1\cdot q_2)=\textrm{tr}\{(p_1\otimes q_1)(p_2\otimes q_2)\}$ for $p_1,p_2,q_1,q_2\in\mathbb{R}^n$. Then,
\begin{align*}
v\textrm{tr}\{(D_p(Du)\odot\xi)D^2u&=-2\left(\xi\cdot Dv-\frac{(Du\cdot\xi)(Du\cdot Dv)}{\eta^2+|Du|^2}\right)\\
&=-2\left(\textrm{tr}\{I_n(\xi\otimes Dv)\}-\frac{\textrm{tr}\{(Du\otimes Du)(\xi\otimes Dv)\}}{\eta^2+|Du|^2}\right)\\
&=-2\left(\textrm{tr}\left\{\left(I_n-\frac{Du\otimes Du}{\eta^2+|Du|^2}\right)(\xi\otimes Dv)\right\} \right)\\
&=-2\textrm{tr}\{a(Du)(\xi\otimes Dv)\},
\end{align*}
and therefore, \eqref{auxiliaryr} is proved.
\end{proof}

%%%%%%%%%%%%%%%%%%%%%%%%%%%%%%%%%%%%%%%%%%%%%%%%%%%%%%%%%%%%%%%%%%%%%%%%%%%%%%%%%%
\section*{Acknowledgements}
The author would like to express his gratitude sincerely to the reviewers for many valuable comments.

\section*{Data availability}
Data sharing not applicable to this article as no datasets were generated or analyzed during the current study.

\end{document}